\documentclass[11pt]{article}
	
	\usepackage{caption}
    \usepackage{url}
    \usepackage{verbatim}
    \textwidth=6.5in
    \textheight=9.00in
    \footskip=0.5in
    \oddsidemargin=0in
    \topmargin=-0.5in

	\usepackage{amsmath}
    \usepackage{color}
	\usepackage{amsthm}
	\usepackage{amsfonts}
	\usepackage{graphicx}
	\usepackage{nicefrac}

	\def\captionfont{\setb@se{11pt}\protect\footnotesize}
    \def\captionfont{\protect\footnotesize}

    \newcommand{\iprd}[2]{\left( #1 , #2 \right)}

    \newcommand{\aiprd}[2]{a\left( #1 , #2 \right)}

    \newcommand{\Soh}{\mathring{S}_h}

	\def\norm#1#2{\left\| #1 \right\|_{#2}}

	\newcommand{\dtau}{\delta_\tau}
	\newcommand{\ddtau}{\delta_{\tau}^2}

	\newcommand{\phih}{\phi_h}
	
	\newcommand{\phitil}{\tilde{\phih}}
	\newcommand{\phich}{\check{\phih}}

	\newcommand{\muh}{\mu_h}

	\newcommand{\hf}{\frac{1}{2}}
	\newcommand{\eA}{\mathcal{E}_a}
	\newcommand{\eh}{\mathcal{E}_h}
	\newcommand{\e}{\mathcal{E}}
	\newcommand{\eAphi}{\mathcal{E}_a^{\phi}}
	\newcommand{\ehphi}{\mathcal{E}_h^{\phi}}
	\newcommand{\ephi}{\mathcal{E}^{\phi}}

	\newtheorem{thm}{Theorem}[section]

	\newtheorem{lem}[thm]{Lemma}
	\newtheorem{rem}[thm]{Remark}


	\begin{document}
\title{Stability and Convergence of a Second Order Mixed Finite Element Method for the Cahn-Hilliard Equation}

	\author{
Amanda E. Diegel\thanks{Department of Mathematics, The University of Tennessee, Knoxville, TN 37996 (diegel@math.utk.edu)},
	\and
Cheng Wang\thanks{Department of Mathematics, The University of Massachusetts, North Dartmouth, MA 02747 (cwang1@umassd.edu)},
	\and
Steven M. Wise\thanks{Department of Mathematics, The University of Tennessee, Knoxville, TN 37996 (Corresponding author: swise@math.utk.edu)}}

	\maketitle
	
	\numberwithin{equation}{section}
	
	\begin{abstract}
In this paper we devise and analyze an unconditionally stable, second-order-in-time numerical scheme for the Cahn-Hilliard equation in two and three space dimensions.  We prove that our two-step scheme is unconditionally energy stable and unconditionally uniquely solvable. Furthermore, we show that the discrete phase variable is bounded in $L^\infty \left(0,T;L^\infty\right)$ and the discrete chemical potential is bounded in $L^\infty \left(0,T;L^2\right)$, for any time and space step sizes, in two and three dimensions, and for any finite final time $T$. We subsequently prove that these variables converge with optimal rates in the appropriate energy norms in both two and three dimensions. We include in this work a detailed analysis of the initialization of the two-step scheme.
	\end{abstract}
	
{\bf Keywords:} Cahn-Hilliard equation, spinodal decomposition, mixed finite element methods, energy stability, error estimates, second order accuracy.

	\section{Introduction}

Let $\Omega\subset \mathbb{R}^d$, $d=2,3$, be an open polygonal or polyhedral domain. For all $\phi \in H^1(\Omega)$, consider the energy \cite{cahn58}
	\begin{align}
E(\phi) = \int_{\Omega} \left\{\frac{1}{4\varepsilon} \left(\phi^2 - 1\right)^2 + \frac{\varepsilon}{2} |\nabla \phi|^2 \right\} d\bf{x},
	\label{eq:Ginzburg-Landau-energy}
	\end{align}
where $\phi$ is the concentration field and $\varepsilon$ is a positive constant. The phase equilibria are represented by the values $\phi = \pm 1$. One version of the  celebrated Cahn-Hilliard equation is given by \cite{cahn61,cahn58}:
	\begin{subequations}
	\begin{align}
\partial_t \phi = \varepsilon \Delta \mu, & & &\text{in} \ \Omega_T ,
	\label{eq:CH-mixed-a-alt}
	\\
\mu = \varepsilon^{-1}\left(\phi^3-\phi\right)-\varepsilon \Delta \phi, & & &\text{in} \ \Omega_T,   
	\label{eq:CH-mixed-b-alt}
	\\
\partial_n \phi =\partial_n \mu = 0, & & &\text{on} \  \partial \Omega\times (0,T) ,
	\label{eq:CH-mixed-c-alt}
	\end{align}
	\end{subequations}
where $\mu := \delta_\phi E$ is the chemical potential. The boundary conditions represent local thermodynamic equilibrium ($\partial_n\phi =0$) and no-mass-flux ($\partial_n\mu =0$). Clearly $E(\phi) \ge 0$ for all $\phi\in H^1(\Omega)$.  Additionally, for all $\varepsilon >0$ and $\phi \in H^1(\Omega)$, there exist positive constants $K_1=K_1(\varepsilon)$ and $K_2=K_2(\varepsilon)$ such that
	\begin{equation} 
0 < K_1 \norm{\phi}{H^1}^2  \le E(\phi) + K_2.
	\end{equation}
	
A weak formulation of \eqref{eq:CH-mixed-a-alt} -- \eqref{eq:CH-mixed-c-alt} may be written as follows: find $(\phi,\mu)$ such that
	\[
\phi \in \, L^\infty\left(0,T;H^1(\Omega)\right)\cap  L^4\left(0,T;L^\infty(\Omega)\right),\ \partial_t \phi \in \, L^2\bigl(0,T; H^{-1}(\Omega)\bigr), \ \mu \in \, L^2\bigl(0,T;H^1(\Omega)\bigr),
	\]
and there hold for almost all $t\in (0,T)$ 
	\begin{subequations}
	\begin{align}
\langle \partial_t \phi ,\nu \rangle + \varepsilon \,\aiprd{\mu}{\nu} &= 0  &&\quad \forall \,\nu \in H^1(\Omega),
	\label{eq:weak-mch-a} 
	\\
\iprd{\mu}{\psi}-\varepsilon \,\aiprd{\phi}{\psi} - \varepsilon^{-1}\iprd{\phi^3-\phi}{\psi}  &= 0  &&\quad \forall \,\psi\in H^1(\Omega),
	\label{eq:weak-mch-b} 
	\end{align}
	\end{subequations}
where
	\begin{equation}
\aiprd{u}{v} := \iprd{\nabla u}{\nabla v}, 
	\end{equation}
with the ``compatible" initial data
	\begin{align}
\phi(0) = \phi_0 \in H^2_N(\Omega) &:= \left\{v\in H^2(\Omega) \,\middle| \,\partial_n v = 0 \,\mbox{on} \,\partial\Omega \right\}.
	\end{align}
Here we use the notations  $H^{-1}(\Omega) := \left(H^1(\Omega)\right)^*$ and $\langle  \, \cdot \, , \, \cdot \, \rangle$ as the duality paring between $H^{-1}$ and $H^1$.  Throughout the paper, we use the notation $\Phi(t) := \Phi(\, \cdot \, , t)\in X$, which views a spatiotemporal function as a map from the time interval $[0,T]$ into an appropriate Banach space, $X$.  The system \eqref{eq:weak-mch-a} -- \eqref{eq:weak-mch-b} is mass conservative:  for almost every $t\in[0,T]$, $\iprd{\phi(t)-\phi_0}{1} = 0$.  This observation rests on the fact that $\aiprd{\phi}{1} = 0$, for all $\phi\in L^2(\Omega)$.  Observe that the homogeneous Neumann boundary conditions associated with the phase variables $\phi$ and $\mu$ are natural in this mixed weak formulation of the problem.  

The existence of weak solutions is a straightforward exercise using the compactness/energy method, for example,~\cite{elliott86}. It is likewise straightforward to show that weak solutions of \eqref{eq:weak-mch-a} -- \eqref{eq:weak-mch-b} dissipate the energy \eqref{eq:Ginzburg-Landau-energy}. In other words, \eqref{eq:CH-mixed-a-alt} -- \eqref{eq:CH-mixed-c-alt} is a mass-conservative gradient flow with respect to the energy \eqref{eq:Ginzburg-Landau-energy}. Precisely, for any $t\in[0,T]$, we have the energy law
	\begin{equation}
E(\phi(t)) +\int_0^t \varepsilon\norm{\nabla\mu(s)}{L^2}^2 ds = E(\phi_0) .
	\label{eq:pde-energy-law}
	\end{equation}
	
The Cahn-Hilliard equation is one of the most important models in mathematical physics. On its own, the equation is a model for spinodal decomposition \cite{cahn61}. However, the Cahn-Hilliard equation is more often paired with equations that describe important physical behavior of a given physical system, typically through nonlinear coupling terms. Prominent examples include the Cahn-Hilliard-Navier-Stokes equation, describing two-phase flow~\cite{feng06, grun13, grun14, kay07, liu03, shen10b}, the Cahn-Hilliard-Hele-Shaw equation \cite{lee02a, lee02b, wise10} which describes spinodal decomposition of a binary fluid in a Hele-Shaw cell, and the Cahn-Larch\'{e} equation~\cite{fratzl99, garcke05, larche82, wise05} describing solid-state, binary phase transformations involving coherent, linear-elastic misfit.

The Cahn-Hilliard equation is a challenging fourth-order, nonlinear parabolic-type partial differential equation.  Naive explicit methods suffer from severe time-step restrictions for stability.  On the other hand, fully implicit numerical methods must contend with a potentially large nonlinear system of algebraic equations.  There remains a great need for sophisticated stable and efficient numerical schemes for the Cahn-Hilliard equation.  Indeed, extensive research has been conducted in this area, in particular for first-order-accurate-in-time schemes, see \cite{aristotelous13, chen98, elliott92, elliott93, elliott96, feng06, feng04, furihata01, guan13, he07, kay06, kay07, kim03, wise10} and the references therein.  Less commonly investigated are second-order-accurate-in-time numerical schemes.  In general, the analysis of second-order schemes for nonlinear equations can be significantly more difficult than that for first-order methods.  Nevertheless, such work has been reported in the following articles~\cite{aristotelous14, chen98, du91, elliott89a, furihata01, shen12, shen10a, wu14}. We mention, in particular, the secant-type algorithms described in~\cite{du91, furihata01}. With the notation $\Psi(\phi) := \frac{1}{4} \left(\phi^2 - 1\right)^2$, the secant  scheme of~\cite{du91} for the Cahn-Hilliard equation may be formulated as 
	\begin{align}
\phi^{n+1} - \phi^n = s \varepsilon \Delta \mu^{n+\frac12}, \quad \mu^{n+\frac12} := \varepsilon^{-1}\frac{\Psi(\phi^{n+1}) - \Psi(\phi^n)}{\phi^{n+1} - \phi^n} - \frac{\varepsilon}{2} \left(\Delta \phi^{n+1} - \Delta \phi^n\right).
	\end{align}
This scheme is energy stable. However, it may not be unconditionally uniquely solvable with respect to the time step size $s$. (See \cite{du91, elliott89a, furihata01} for details.) Lack of unconditional solvability may be problematic as coarsening studies using the Cahn Hilliard equation may involve very large time scales, requiring potentially very large time steps for efficiency.

Chen and Shen introduce a semi-implicit Fourier-spectral method in \cite{chen98} which has a couple of advantages over explicit Euler finite difference methods. In their scheme, the high-order semi-implicit treatment in time enables the use of larger time steps while maintaining higher accuracy. However, even though the time step size may be taken to be larger, the scheme's stability is still not completely independent on the time step size. Furthermore, although they test their scheme through numerical simulations, no formal stability or convergence analyses are presented in the paper. 

Wu, Zwieten, and Van Der Zee \cite{wu14} introduce a semi-discrete second-order convex-splitting scheme for Cahn-Hilliard-type equations with applications to diffuse-interface tumor-growth models. They are able to show unconditional energy stability relative to the energy norms, mass conservation, and a second order local truncation error for the phase field parameter. However, they do not prove second order accuracy relative to the energy norm for the phase field parameter. 

In contrast to the papers referenced above, we propose a new second-order-accurate-in-time, fully discrete, mixed finite element scheme for the Cahn-Hilliard problem \eqref{eq:CH-mixed-a-alt} -- \eqref{eq:CH-mixed-c-alt}, which is closely related to the finite difference scheme proposed in \cite{guo14}:
	\begin{subequations}	
	\begin{align}
\phih^{n+1} - \phih^n &= s \, \varepsilon \, \Delta_h \muh^{n+\hf},
	\label{eq:scheme-a-pre}
	\\
\muh^{n+\hf} &:= \frac{1}{4 \varepsilon} \left(\phih^{n+1} + \phih^n\right) \left(\left(\phih^{n+1}\right)^2 + \left(\phih^n\right)^2\right)  - \frac{1}{\varepsilon} \left(\frac32 \phih^{n} - \frac12 \phih^{n-1} \right)
	\nonumber
	\\
&- \varepsilon \Delta_h \left(\frac34 \phih^{n+1} + \frac14 \phih^{n-1}\right),
	\label{eq:scheme-b-pre}
	\end{align}
	\end{subequations}
where $\Delta_h$ above is a finite difference stencil approximating the Laplacian, and $\phi_h$ and $\mu_h$ are grid variables. The formulation of the scheme \eqref{eq:scheme-a-pre} -- \eqref{eq:scheme-b-pre} uses a convex splitting of the energy~\cite{eyre98, elliott89b, feng12, wise09a}. Observe  that the energy \eqref{eq:Ginzburg-Landau-energy} may be represented as the difference between two purely convex energies: 
	\begin{align}
E(\phi) = E_c(\phi) - E_e(\phi) = \frac{1}{4\varepsilon}\norm{\phi}{L^4}^4 +\frac{\varepsilon}{2} \norm{\nabla \phi}{L^2}^2 + \frac{|\Omega|}{4\varepsilon} - \frac{1}{2\varepsilon}\norm{\phi}{L^2}^2 .
	\label{eq:continuous-energy}
	\end{align}
The idea is then to treat the variation of $E_c$ implicitly and that of $E_e$, explicitly. The advantages of the scheme \eqref{eq:scheme-a-pre} -- \eqref{eq:scheme-b-pre} are three fold. The scheme is unconditionally energy stable, unconditionally uniquely solvable, and converges optimally in the energy norm. In our finite element version of the scheme, the stability and solvability statements we prove are \emph{completely unconditional with respect to the time and space step sizes}. In fact, \emph{all} of our \emph{a priori} stability estimates hold completely independently of the time and space step sizes.  We use a bootstrapping technique to leverage the energy stabilities to achieve unconditional $L^{\infty}(0,T; L^{\infty}(\Omega))$ stability for the phase field variable $\phi_h$ and unconditional $L^{\infty}(0,T; L^2(\Omega))$ stability for the chemical potential $\mu_h$.  With these stabilities in hand, we are then able to prove optimal error estimates for $\phi_h$ and $\mu_h$ in the appropriate energy norms.

The remainder of the paper is organized as follows. In Section~\ref{sec:defn-and-properties}, we define our second-order mixed finite element version of the scheme and prove the unconditional solvability and stability. In Section~\ref{sec:error estimates}, we prove error estimates for the scheme under suitable regularity assumptions for the PDE solution. In Section~\ref{sec:numerical-experiments}, we present the results of numerical tests that confirm the rates of convergence predicted by the error estimates.

	\section{A Mixed Finite Element Convex Splitting Scheme}
	\label{sec:defn-and-properties}
	
	\subsection{Definition of the Scheme}
	\label{subsec-defn}

Let $M$ be a positive integer and $0=t_0 < t_1 < \cdots < t_M = T$ be a uniform partition of $[0,T]$, with $\tau = t_i-t_{i-1}$ and $i=1,\ldots ,M$.  Suppose ${\mathcal T}_h = \left\{ K \right\}$ is a conforming, shape-regular, quasi-uniform family of triangulations of $\Omega$.  For $q\in\mathbb{Z}^+$, define $S_h := \left\{v\in C^0(\Omega) \, \middle| \,v|_K \in {\mathcal P}_q(K), \,\forall \,\,  K\in \mathcal{T}_h \right\}\subset H^1(\Omega)$. Define $\Soh := S_h\cap L_0^2(\Omega)$, with $L_0^2(\Omega)$ denoting those functions in $L^2(\Omega)$ with zero mean. Our mixed second-order splitting scheme is defined as follows:  for any $1\le m\le M-1$, given  $\phih^{m}, \phih^{m-1} \in S_h$, find $\phih^{m+1},\muh^{m+\frac12} \in S_h$ such that  
	\begin{subequations}
	\begin{align}
\iprd{\dtau \phih^{m+\frac12}}{\nu} + \varepsilon \,\aiprd{\muh^{m+\frac12}}{\nu} &= \, 0  & \forall \, \nu \in S_h ,
	\label{eq:scheme-a}
	\\
\varepsilon^{-1} \,\iprd{\chi\left(\phih^{m+1},\phih^m\right)}{\psi} - \varepsilon^{-1} \iprd{\phitil^{m+\frac12}}{\psi}  & &
	\nonumber
	\\
+ \varepsilon \,\aiprd{\phich^{m+\frac12}}{\psi}- \iprd{\muh^{m+\frac12}}{\psi} &= \, 0 & \forall \, \psi\in S_h,
	\label{eq:scheme-b}
	\end{align}
	\end{subequations}
where
	\begin{align}
\dtau \phih^{m+\frac12} &:= \frac{\phih^{m+1} - \phih^{m}}{\tau}, \quad \phih^{m+\frac12} := \frac12 \phih^{m+1} + \frac12 \phih^m, \quad \phitil^{m+\frac12} := \frac32 \phih^m - \frac12 \phih^{m-1},
	\\
\phich^{m+\frac12} &:= \frac34 \phih^{m+1} + \frac14 \phih^{m-1}, \quad  \chi\left(\phih^{m+1},\phih^m\right) := \frac{1}{2}\left(\left(\phih^{m+1}\right)^2 + \left(\phih^{m}\right)^2\right)\phi^{m+\frac12}.
	\end{align}
Since this is a multi-step scheme, it requires a separate initialization process. For the first step, the scheme is as follows: given $\phih^0 \in S_h$, find $\phih^1, \muh^{\frac12} \in S_h$ such that
	\begin{subequations}
	\begin{align}
\iprd{\dtau \phih^{\frac12}}{\nu} + \varepsilon \,\aiprd{\muh^{\frac12}}{\nu} &= \, 0  & \forall \, \nu \in S_h ,
	\label{eq:scheme-a-initial}
	\\
\varepsilon^{-1} \,\iprd{\chi\left(\phih^{1},\phih^0\right)}{\psi} - \varepsilon^{-1} \iprd{\phih^0}{\psi} + \frac{\tau}{2} \, \aiprd{\muh^0}{\psi}  & &
	\nonumber
	\\
+ \varepsilon \,\aiprd{\phih^{\frac12}}{\psi} - \iprd{\muh^{\frac12}}{\psi} &= \, 0  & \forall \, \psi\in S_h,
	\label{eq:scheme-b-initial}
	\end{align}
	\end{subequations}
where $\phih^0 := R_h \phi_0$, and the operator $R_h: H^1(\Omega) \to S_h$ is a standard Ritz projection:
	\begin{equation}
\aiprd{R_h\phi - \phi}{\xi} = 0 \quad \forall \, \xi\in S_h, \quad \iprd{R_h \phi-\phi}{1}=0.
	\label{eq:Ritz-projection}
	\end{equation}
Note that the scheme requires initial data for the chemical potential, $\muh^0\in S_h$, which is defined as $\muh^0 := R_h \mu_0$, where
	\begin{equation}
\mu_0 := \varepsilon^{-1}\left(\phi_0^3 - \phi_0\right) - \varepsilon\Delta\phi_0.
	\end{equation}

	\begin{thm}
The scheme \eqref{eq:scheme-a} -- \eqref{eq:scheme-b} coupled with the initial scheme \eqref{eq:scheme-b-initial} -- \eqref{eq:scheme-b-initial} is uniquely solvable for any mesh parameters $h$ and $\tau$ and for any model parameters.
	\end{thm}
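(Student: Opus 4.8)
The plan is to recast each step of the scheme as the Euler--Lagrange equation of a strictly convex minimization problem over a suitable finite-dimensional subspace, and then invoke the direct method of the calculus of variations. The key observation is that the implicit terms in \eqref{eq:scheme-b} come from the \emph{convex} part $E_c$ of the splitting \eqref{eq:continuous-energy}, while the explicit terms (those involving $\phitil^{m+\frac12}$, $\phih^{m-1}$, and $\muh^0$) are known data at the time we solve for $\phih^{m+1}$. First I would eliminate $\muh^{m+\frac12}$ using the mass-conservation structure: testing \eqref{eq:scheme-a} with $\nu\equiv 1$ shows $\iprd{\phih^{m+1}}{1} = \iprd{\phih^m}{1}$, so $\phih^{m+1}$ lives in the affine space $\phih^m + \Soh$. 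On $\Soh$ the bilinear form $\aiprd{\cdot}{\cdot}$ is coercive, so there is a well-defined ``inverse Laplacian'' $(-\Delta_h)^{-1}:\Soh\to\Soh$, and \eqref{eq:scheme-a} lets us write $\muh^{m+\frac12}$ (up to an additive constant, which is then fixed by \eqref{eq:scheme-b}) in terms of $\phih^{m+1}$.

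Next I would introduce, for the generic step $1\le m\le M-1$, the functional
	\begin{align}
J(\phi) := \frac{1}{8\varepsilon}\norm{\phi}{L^4}^4 + \frac{\tau\varepsilon^2}{?}\cdots
	\nonumber
	\end{align}
— more precisely, a functional of the form
	\begin{equation}
J(\phi) = \frac{1}{2\tau\varepsilon}\norm{\nabla(-\Delta_h)^{-1}(\phi-\phih^m)}{L^2}^2 + \frac{1}{16\varepsilon}\int_\Omega\left(\phi^2 + (\phih^m)^2\right)^2 d\bf{x} + \frac{3\varepsilon}{8}\norm{\nabla\phi}{L^2}^2 - \iprd{\ell^m}{\phi},
	\nonumber
	\end{equation}
where $\ell^m\in S_h$ collects all the explicit data (the $\varepsilon^{-1}\phitil^{m+\frac12}$ term, the $-\frac{\varepsilon}{4}\Delta_h\phih^{m-1}$ term written via $\aiprd{\cdot}{\cdot}$, etc.). The first term is well-defined precisely on the affine set $\phih^m+\Soh$ and is convex and quadratic there; the quartic term is strictly convex in $\phi$; and the gradient term is convex. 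A short computation shows that the Euler--Lagrange equation $\langle J'(\phi),\psi\rangle = 0$ for all $\psi\in\Soh$ is exactly the system \eqref{eq:scheme-a}--\eqref{eq:scheme-b} after eliminating $\muh^{m+\frac12}$. Since $J$ is continuous, strictly convex, and coercive on the finite-dimensional affine space $\phih^m+\Soh$ (coercivity from the quartic and the $H^1$-type terms, using the Poincar\'e inequality for the inverse-Laplacian term), it has a unique minimizer; strict convexity also guarantees the critical point is unique. Recovering $\muh^{m+\frac12}$ from \eqref{eq:scheme-b} then determines it uniquely up to the additive constant, and that constant is pinned down by testing \eqref{eq:scheme-b} against $\psi\equiv 1$ and using the explicit data. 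The initialization step \eqref{eq:scheme-a-initial}--\eqref{eq:scheme-b-initial} is handled identically, with $\phih^{\frac12} = \frac12\phih^1 + \frac12\phih^0$ and the $\phich$ term replaced by the simpler $\phih^{\frac12}$ term; the data $\muh^0 = R_h\mu_0$ and $\phih^0 = R_h\phi_0$ are given, so the same convex functional (with a slightly different linear term and gradient coefficient) applies.

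The main obstacle I anticipate is bookkeeping rather than conceptual: one must verify that the implicit nonlinear term $\chi(\phih^{m+1},\phih^m) = \frac12\left((\phih^{m+1})^2 + (\phih^m)^2\right)\phi^{m+\frac12}$ is genuinely the gradient (in the $L^2$ sense, tested against $S_h$) of a convex functional of $\phih^{m+1}$ with $\phih^m$ held fixed. Writing $a = \phih^m$ fixed and $\phi = \phih^{m+1}$, one needs a potential $F_a(\phi)$ with $F_a'(\phi) = \frac12(\phi^2+a^2)\cdot\frac{\phi+a}{2}$ and $F_a'' \ge 0$; expanding, $F_a'(\phi) = \frac14(\phi^3 + a\phi^2 + a^2\phi + a^3)$, so $F_a(\phi) = \frac{1}{16}\phi^4 + \frac{a}{12}\phi^3 + \frac{a^2}{8}\phi^2 + \frac{a^3}{4}\phi$, and one checks $F_a''(\phi) = \frac{3}{4}\phi^2 + \frac{a}{2}\phi + \frac{a^2}{4} = \frac{1}{4}(\phi+a)^2 + \frac12\phi^2 \ge 0$. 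This confirms convexity (this is the discrete analogue of the convexity of $E_c$), so the functional $J$ above is legitimately strictly convex on the affine space, and the direct method closes the argument with no step-size or parameter restriction whatsoever. One final check is that $(-\Delta_h)^{-1}$ applied to $\phi - \phih^m$ makes sense, i.e. $\phi - \phih^m$ has zero mean, which is exactly the mass-conservation constraint that defines the affine space we are minimizing over — so the construction is self-consistent.
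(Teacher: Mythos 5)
Your proposal is correct and follows essentially the same route as the paper, whose proof consists only of the remark that solvability ``is based on convexity arguments'' in the manner of Theorem 5 of \cite{hu09}; you supply precisely the omitted details (elimination of $\muh^{m+\frac12}$ via the zero-mean constraint, minimization of a strictly convex, coercive functional over the affine space $\phih^m + \Soh$, recovery of the additive constant in $\muh^{m+\frac12}$ by testing \eqref{eq:scheme-b} with $\psi\equiv 1$, and the verification that $\chi(\cdot,\phih^m)$ admits the convex potential $F_a$). One small touch-up: the quartic term in your displayed $J$ should be $\frac{1}{\varepsilon}\int_\Omega F_{\phih^m}(\phi)\,d{\bf x}$ rather than $\frac{1}{16\varepsilon}\int_\Omega\bigl(\phi^2+(\phih^m)^2\bigr)^2\,d{\bf x}$, since the latter differentiates to $\frac{1}{4\varepsilon}(\phi^2+a^2)\phi$ and misses the factor $(\phi+a)$ present in $\chi$ --- but this is exactly the bookkeeping you identify and resolve in your final paragraph, so the argument closes as stated.
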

	
	\begin{proof}
The proof is based on convexity arguments and follows in a similar manner as that of Theorem 5 from reference \cite{hu09}. We omit the details for brevity.
	\end{proof}

	\begin{rem}
Note that it is not necessary for solvability and some basic energy stabilities that the $\mu$--space and the $\phi$--space be equal.  However, the proofs of the higher-order stability estimates, in particular the proof in Lemma~\ref{lem-a-priori-stability-dtau-mu}, do require the equivalence of these spaces.
	\end{rem}
	
	\begin{rem}
	\label{rem:initial-projection}
The elliptic projections are used in the initialization for simplicity in the forthcoming error analysis. However, other (simpler) projections may be used in the initialization step, as long as they have good approximation properties.
	\end{rem}
	
\subsection{Unconditional Energy Stability}
	\label{subsec-energy-stability}

We now show that the solutions to our scheme enjoy stability properties that are similar to those of the PDE solutions, and moreover, these properties hold regardless of the sizes of $h$ and $\tau$.  The first property, the unconditional energy stability, is a direct result of the convex decomposition. We begin the discussion with the definition of the discrete Laplacian, $\Delta_h: S_h \to \Soh$, as follows:  for any $v_h\in S_h$, $\Delta_h v_h\in \Soh$ denotes the unique solution to the problem
	\begin{equation}
\iprd{\Delta_h v_h}{\xi} = -\aiprd{v_h}{\xi}  \quad\forall \, \,\xi\in S_h.
	\label{eq:discrete-Laplacian}
	\end{equation}
In particular, setting $\xi = \Delta_h v_h$ in \eqref{eq:discrete-Laplacian}, we obtain  
	\begin{displaymath}
\norm{\Delta_h v_h}{L^2}^2 = -\aiprd{v_h}{ \Delta_hv_h} .
	\end{displaymath}

\begin{lem}
	\label{lem-energy-law-initial}
Let $(\phih^{1}, \muh^{\frac12}) \in S_h\times S_h$ be the unique solution of the initialization scheme \eqref{eq:scheme-a-initial} -- \eqref{eq:scheme-b-initial}.  Then the following first-step energy stability holds for any $h, \, \tau >0$:
	\begin{align}
E\left(\phih^{1}\right) + \tau \varepsilon \norm{\nabla\muh^{\frac12}}{L^2}^2 + \frac{1}{4\varepsilon} \norm{\phih^1 - \phih^0}{L^2}^2 \le E\left(\phih^0\right) + \frac{ \varepsilon \tau^2}{4} \norm{\Delta_h \muh^0}{L^2}^2,
	\label{eq:ConvSplitEnLaw-initial}
	\end{align}
where $E(\phi)$ is defined in \eqref{eq:continuous-energy}.
	\end{lem}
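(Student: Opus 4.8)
The plan is to test the two equations of the initialization scheme with the natural multipliers and combine the resulting identities. First I would set $\nu = \tau \muh^{\frac12}$ in \eqref{eq:scheme-a-initial} to obtain $\iprd{\phih^1 - \phih^0}{\muh^{\frac12}} + \tau\varepsilon\norm{\nabla\muh^{\frac12}}{L^2}^2 = 0$. Next I would set $\psi = \phih^1 - \phih^0$ in \eqref{eq:scheme-b-initial}; this produces the term $\iprd{\muh^{\frac12}}{\phih^1-\phih^0}$, which cancels against the contribution from the first equation. What remains is an identity of the form
\[
\varepsilon^{-1}\iprd{\chi(\phih^1,\phih^0)}{\phih^1-\phih^0} - \varepsilon^{-1}\iprd{\phih^0}{\phih^1-\phih^0} + \frac{\tau}{2}\aiprd{\muh^0}{\phih^1-\phih^0} + \varepsilon\aiprd{\phih^{\frac12}}{\phih^1-\phih^0} + \tau\varepsilon\norm{\nabla\muh^{\frac12}}{L^2}^2 = 0.
\]

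The next step is to identify each convex-splitting term with an energy difference plus a nonnegative remainder. For the gradient term, $\varepsilon\aiprd{\phih^{\frac12}}{\phih^1-\phih^0} = \frac{\varepsilon}{2}\left(\norm{\nabla\phih^1}{L^2}^2 - \norm{\nabla\phih^0}{L^2}^2\right)$ exactly, using $\phih^{\frac12} = \frac12(\phih^1+\phih^0)$. For the quartic/contractive term, the pointwise algebraic identity
\[
\frac12\left((a^2+b^2)\cdot\frac{a+b}{2}\right)(a-b) = \frac14(a^4 - b^4) + \frac{1}{8}(a-b)^2(a+b)^2 - \text{(lower order)},
\]
needs to be worked out carefully; the clean version is $\chi(a,b)(a-b) = \frac14(a^4-b^4) + \frac14(a^2-b^2)^2 \ge \frac14(a^4-b^4)$ — I would verify this by direct expansion — so that $\varepsilon^{-1}\iprd{\chi(\phih^1,\phih^0)}{\phih^1-\phih^0} \ge \frac{1}{4\varepsilon}\left(\norm{\phih^1}{L^4}^4 - \norm{\phih^0}{L^4}^4\right)$. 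For the concave quadratic term, $-\varepsilon^{-1}\iprd{\phih^0}{\phih^1-\phih^0} = -\frac{1}{2\varepsilon}\left(\norm{\phih^1}{L^2}^2 - \norm{\phih^0}{L^2}^2\right) + \frac{1}{2\varepsilon}\norm{\phih^1-\phih^0}{L^2}^2$, which is where the positive $\frac{1}{4\varepsilon}\norm{\phih^1-\phih^0}{L^2}^2$ on the left of the claimed inequality will ultimately come from (with room to spare).

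The remaining term $\frac{\tau}{2}\aiprd{\muh^0}{\phih^1-\phih^0}$ is the one genuinely new feature of the initialization analysis and is the main obstacle, since it has no definite sign and is the reason the inequality is not an equality. I would rewrite it using the discrete Laplacian: $\aiprd{\muh^0}{\phih^1-\phih^0} = -\iprd{\Delta_h\muh^0}{\phih^1-\phih^0}$ (noting $\phih^1-\phih^0$ can be used here since $\Delta_h$ pairs correctly against $S_h$; if mean-value issues arise, subtract the mean of $\phih^1-\phih^0$, which is zero by mass conservation of the scheme). Then apply Cauchy–Schwarz and a weighted Young's inequality: $\left|\frac{\tau}{2}\iprd{\Delta_h\muh^0}{\phih^1-\phih^0}\right| \le \frac{\varepsilon\tau^2}{4}\norm{\Delta_h\muh^0}{L^2}^2 + \frac{1}{4\varepsilon}\norm{\phih^1-\phih^0}{L^2}^2$. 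Assembling all the pieces, the energy differences combine into $E(\phih^1) - E(\phih^0)$ (using the definition \eqref{eq:continuous-energy}), the $\frac{1}{2\varepsilon}\norm{\phih^1-\phih^0}{L^2}^2$ from the concave term absorbs the $\frac{1}{4\varepsilon}\norm{\phih^1-\phih^0}{L^2}^2$ from Young's inequality and still leaves $\frac{1}{4\varepsilon}\norm{\phih^1-\phih^0}{L^2}^2$ on the good side, the discarded quartic remainder $\frac{1}{4\varepsilon}\norm{(\phih^1)^2-(\phih^0)^2}{L^2}^2 \ge 0$ is simply dropped, and the $\frac{\varepsilon\tau^2}{4}\norm{\Delta_h\muh^0}{L^2}^2$ lands on the right-hand side, yielding exactly \eqref{eq:ConvSplitEnLaw-initial}.
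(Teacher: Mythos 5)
Your proposal is correct and follows essentially the same route as the paper: test \eqref{eq:scheme-a-initial} with $\nu=\tau\muh^{\frac12}$ and \eqref{eq:scheme-b-initial} with $\psi=\phih^1-\phih^0$, convert each convex-splitting term into an energy difference, and absorb the extra $\frac{\tau}{2}\aiprd{\muh^0}{\phih^1-\phih^0}$ term via the discrete Laplacian and a weighted Young's inequality, exactly as in the paper. One small algebra point: your tentative identity $\chi(a,b)(a-b)=\frac14(a^4-b^4)+\frac14(a^2-b^2)^2$ is not right --- since $\chi(a,b)=\frac14(a^2+b^2)(a+b)$, one has $\chi(a,b)(a-b)=\frac14(a^2+b^2)(a^2-b^2)=\frac14(a^4-b^4)$ exactly, so the ``quartic remainder'' you propose to drop is identically zero; this only simplifies your argument and does not affect its validity.
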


	\begin{proof}
Setting $\nu= \tau \muh^{\frac12}$ in \eqref{eq:scheme-a-initial} and $\psi = \tau \dtau \phih^{\frac12} = \phih^1 - \phih^0$ in \eqref{eq:scheme-b-initial} yields the following:
	\begin{align}
\tau \iprd{\dtau \phih^{\frac12}}{\muh^{\frac12}} + \tau \varepsilon \norm{\nabla\muh^{\frac12}}{L^2}^2 &=   0,
	\label{eq:tested-energy-1-initial}
	\\
	\nonumber
\varepsilon^{-1} \,\iprd{\chi\left(\phih^{1},\phih^0\right)}{\phih^1 - \phih^0} - \varepsilon^{-1} \iprd{\phih^{0}}{\phih^1 - \phih^0} + \varepsilon \,\aiprd{\phih^{\frac12}}{\phih^1 - \phih^0} \,&
	\\
+ \, \frac{\tau}{2} \, \aiprd{\muh^0}{\phih^1 - \phih^0} - \tau \iprd{\muh^{\frac12}}{\dtau \phih^{\frac12}} &= 0.
	\label{eq:tested-energy-2-initial}
	\end{align}
Adding Eqs.~\eqref{eq:tested-energy-1-initial} and \eqref{eq:tested-energy-2-initial}, using Young's inequality, and the following identities 
	\begin{align}
 \iprd{\chi\left(\phih^{1},\phih^0\right)}{\phih^1 - \phih^0} =& \, \frac14 \left(\norm{\phih^1}{L^4}^4 - \norm{\phih^0}{L^4}^4 \right),
	\label{eq:identity-nonlinear-initial}
	\\
\iprd{\phih^0}{\phih^1 - \phih^0} =& \, \frac12 \left(\norm{\phih^1}{L^2}^2 - \norm{\phih^0}{L^2}^2 - \norm{\phih^1 - \phih^0}{L^2}^2 \right),
	\label{eq:identity-linear-initial}
	\end{align}
the result is obtained.
	\end{proof}

We now define a modified energy
	\begin{equation}
F(\phi, \psi) := E(\phi) + \frac{1}{4\varepsilon} \norm{\phi - \psi}{L^2}^2 + \frac{\varepsilon}{8} \norm{\nabla \phi - \nabla \psi}{L^2}^2,
	\end{equation}
where $E(\phi)$ is defined as above.
	
	\begin{lem}
	\label{lem-energy-law}
Let $(\phih^{m+1}, \muh^{m+\frac12}) \in S_h\times S_h$ be the unique solution of  \eqref{eq:scheme-a} -- \eqref{eq:scheme-b}, and $(\phih^{1}, \muh^{\frac12}) \in S_h\times S_h$, the unique solution of \eqref{eq:scheme-a-initial} -- \eqref{eq:scheme-b-initial}.  Then the following energy law holds for any $h,\,  \tau >0$:
	\begin{align}
F\left(\phih^{\ell+1}, \phih^{\ell}\right) +\tau \varepsilon \sum_{m=1}^\ell \norm{\nabla\muh^{m+\frac12}}{L^2}^2 &+  \sum_{m=1}^\ell \Bigg[ \frac{1}{4\varepsilon} \norm{\phih^{m+1} - 2 \phih^m + \phih^{m-1}}{L^2}^2  
	\nonumber
	\\
&+ \frac{\varepsilon}{8} \norm{\nabla \phih^{m+1} - 2 \nabla \phih^m + \nabla \phih^{m-1}}{L^2}^2 \Biggr] = F\left(\phih^1, \phih^0\right),
	\label{eq:ConvSplitEnLaw}
	\end{align}
for all $1 \leq \ell \leq M-1$.
	\end{lem}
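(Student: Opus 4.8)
The plan is to derive an \emph{exact} one-step energy identity for each step $m$ of the main scheme, $1\le m\le\ell$, and then sum over $m$ so that the modified energy $F$ telescopes. Only the main scheme \eqref{eq:scheme-a}--\eqref{eq:scheme-b} is needed, since it is valid for all $1\le m\le M-1$ and in particular for $m=1$; the initialization merely defines $\phih^1$, while $\phih^0=R_h\phi_0$ supplies the remaining data required by the step $m=1$, and its energy bound (Lemma~\ref{lem-energy-law-initial}) is not invoked here.

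Fix such an $m$. Taking $\nu=\tau\muh^{m+\hf}$ in \eqref{eq:scheme-a} and $\psi=\phih^{m+1}-\phih^m=\tau\dtau\phih^{m+\hf}$ in \eqref{eq:scheme-b} and adding the two equations, the chemical-potential cross terms $\pm\,\tau\iprd{\dtau\phih^{m+\hf}}{\muh^{m+\hf}}$ cancel, leaving
\[
\tau\varepsilon\norm{\nabla\muh^{m+\hf}}{L^2}^2 + \varepsilon^{-1}\iprd{\chi(\phih^{m+1},\phih^m)}{\phih^{m+1}-\phih^m} - \varepsilon^{-1}\iprd{\phitil^{m+\hf}}{\phih^{m+1}-\phih^m} + \varepsilon\,\aiprd{\phich^{m+\hf}}{\phih^{m+1}-\phih^m} = 0 .
\]
The nonlinear term equals $\tfrac{1}{4\varepsilon}\bigl(\norm{\phih^{m+1}}{L^4}^4-\norm{\phih^m}{L^4}^4\bigr)$ by the same pointwise factorization $\chi(a,b)(a-b)=\tfrac14(a^4-b^4)$ used in \eqref{eq:identity-nonlinear-initial}. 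For the two remaining terms I would use the elementary decompositions
\[
\phitil^{m+\hf} = \phih^{m+\hf} - \tfrac12\bigl(\phih^{m+1}-2\phih^m+\phih^{m-1}\bigr), \qquad \phich^{m+\hf} = \phih^{m+\hf} + \tfrac14\bigl(\phih^{m+1}-2\phih^m+\phih^{m-1}\bigr),
\]
which split each of $\phitil^{m+\hf}$ and $\phich^{m+\hf}$ into a Crank--Nicolson part $\phih^{m+\hf}=\tfrac12(\phih^{m+1}+\phih^m)$ and a second-difference correction. Pairing the $\phih^{m+\hf}$ parts against $\phih^{m+1}-\phih^m$ and using $\iprd{\phih^{m+1}+\phih^m}{\phih^{m+1}-\phih^m}=\norm{\phih^{m+1}}{L^2}^2-\norm{\phih^m}{L^2}^2$ (and its gradient analogue) produces $-\tfrac1{2\varepsilon}\bigl(\norm{\phih^{m+1}}{L^2}^2-\norm{\phih^m}{L^2}^2\bigr)+\tfrac{\varepsilon}{2}\bigl(\norm{\nabla\phih^{m+1}}{L^2}^2-\norm{\nabla\phih^m}{L^2}^2\bigr)$, which, combined with the $L^4$ term, is exactly $E(\phih^{m+1})-E(\phih^m)$. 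Pairing the correction parts and using the polarization identity $\iprd{w-z}{w}=\tfrac12\bigl(\norm{w}{L^2}^2-\norm{z}{L^2}^2+\norm{w-z}{L^2}^2\bigr)$ with $w=\phih^{m+1}-\phih^m$, $z=\phih^m-\phih^{m-1}$ (so $w-z=\phih^{m+1}-2\phih^m+\phih^{m-1}$), and analogously with gradients, produces $\tfrac1{4\varepsilon}\bigl(\norm{\phih^{m+1}-\phih^m}{L^2}^2-\norm{\phih^m-\phih^{m-1}}{L^2}^2\bigr)+\tfrac{\varepsilon}{8}\bigl(\norm{\nabla\phih^{m+1}-\nabla\phih^m}{L^2}^2-\norm{\nabla\phih^m-\nabla\phih^{m-1}}{L^2}^2\bigr)$ together with the nonnegative remainder $\tfrac1{4\varepsilon}\norm{\phih^{m+1}-2\phih^m+\phih^{m-1}}{L^2}^2+\tfrac{\varepsilon}{8}\norm{\nabla\phih^{m+1}-2\nabla\phih^m+\nabla\phih^{m-1}}{L^2}^2$. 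Recognizing the definition of $F$, the displayed equation collapses to the per-step identity
\[
\begin{aligned}
F\bigl(\phih^{m+1},\phih^m\bigr) + \tau\varepsilon\norm{\nabla\muh^{m+\hf}}{L^2}^2 &+ \tfrac1{4\varepsilon}\norm{\phih^{m+1}-2\phih^m+\phih^{m-1}}{L^2}^2 \\ &+ \tfrac{\varepsilon}{8}\norm{\nabla\phih^{m+1}-2\nabla\phih^m+\nabla\phih^{m-1}}{L^2}^2 = F\bigl(\phih^m,\phih^{m-1}\bigr) .
\end{aligned}
\]

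Summing this identity over $m=1,\dots,\ell$ telescopes the $F$ terms to $F(\phih^{\ell+1},\phih^\ell)-F(\phih^1,\phih^0)$ and yields \eqref{eq:ConvSplitEnLaw}. The only substantive step is the algebra in the middle: one must verify that the specific coefficients $\tfrac32,-\tfrac12$ in $\phitil^{m+\hf}$ and $\tfrac34,\tfrac14$ in $\phich^{m+\hf}$ are precisely those for which the explicit and implicit second-difference corrections reassemble into the telescoping increments of $F$ with a genuinely nonnegative remainder. This is exactly where the design of the scheme is exploited; each identity involved is elementary, so the verification, while requiring care, is routine.
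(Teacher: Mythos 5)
Your proposal is correct and follows essentially the same route as the paper: test \eqref{eq:scheme-a} with $\muh^{m+\frac12}$ and \eqref{eq:scheme-b} with $\dtau\phih^{m+\frac12}$ (you multiply both by $\tau$, which is immaterial), cancel the cross terms, reduce the nonlinear and linear terms to telescoping increments of $F$ plus nonnegative second-difference remainders, and sum over $m$. The only difference is presentational: the paper simply states the two key algebraic identities \eqref{eq:identity-nonlinear} and \eqref{eq:identity-check}, whereas you derive them by splitting $\phitil^{m+\frac12}$ and $\phich^{m+\frac12}$ into a Crank--Nicolson part plus a second-difference correction and applying the polarization identity -- a correct and somewhat more transparent derivation of the same identities.
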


	\begin{proof}
Setting $\nu= \muh^{m+\frac12}$ in \eqref{eq:scheme-a} and $\psi = \dtau \phih^{m+\frac12}$ in \eqref{eq:scheme-b} gives
	\begin{align}
\iprd{\dtau \phih^{m+\frac12}}{\muh^{m+\frac12}} + \varepsilon \norm{\nabla\muh^{m+\frac12}}{L^2}^2 &=   0,
	\label{eq:tested-energy-1}
	\\
	\nonumber
 \varepsilon^{-1} \,\iprd{\chi\left(\phih^{m+1},\phih^m\right)}{\dtau \phih^{m+\frac12}} -  \varepsilon^{-1} \,\iprd{\phitil^{m+\frac12}}{\dtau \phih^{m+\frac12}} \quad &
	\\
+ \, \varepsilon \,\aiprd{\phich^{m+\frac12}}{\dtau \phih^{m+\frac12}} - \iprd{\muh^{m+\frac12}}{\dtau \phih^{m+\frac12}} &= 0.
	\label{eq:tested-energy-2}
	\end{align}
Combining \eqref{eq:tested-energy-1} -- \eqref{eq:tested-energy-2}, using the identities 
	\begin{align}
 \iprd{\chi\left(\phih^{m+1},\phih^m\right)}{\dtau \phih^{m+\frac12}} - \iprd{\phitil^{m+\frac12}}{\dtau \phih^{m+\frac12}}  = &\, \frac1{4\tau} \left( \norm{\left(\phih^{m+1}\right)^2-1}{L^2}^2 - \norm{\left(\phih^{m}\right)^2-1}{L^2}^2\right)
	\nonumber
	\\
&  + \frac1{4\tau} \left( \norm{\phih^{m+1} - \phih^m}{L^2}^2 - \norm{\phih^{m} - \phih^{m-1}}{L^2}^2\right)
	\nonumber
	\\
& + \frac1{4\tau} \norm{\phih^{m+1} - 2 \phih^m + \phih^{m-1}}{L^2}^2
	\label{eq:identity-nonlinear}
	\end{align}
and
	\begin{align}
\aiprd{\phich^{m+\frac12}}{\dtau\phih^{m+\frac12}}  = & \, \frac{1}{2 \tau} \left( \norm{\nabla \phih^{m+1}}{L^2}^2 - \norm{\nabla \phih^{m}}{L^2}^2 \right) 
	\nonumber
	\\
& + \frac{1}{8 \tau} \left(\norm{\nabla \phih^{m+1} - \nabla \phih^m}{L^2}^2 - \norm{\nabla \phih^{m} - \nabla \phih^{m-1}}{L^2}^2 \right)
	\nonumber
	\\
& + \frac{1}{8 \tau} \norm{\nabla \phih^{m+1} - 2 \nabla \phih^m + \nabla \phih^{m-1}}{L^2}^2,
	\label{eq:identity-check}
	\end{align}
and applying the operator $\tau\sum_{m=1}^\ell$ to the combined equation result in \eqref{eq:ConvSplitEnLaw}.
	\end{proof} 

In the sequel, we will make the following stability assumptions for the initial data:
	\begin{equation}
E\left(\phih^0\right) + \tau^2 \norm{\Delta_h \muh^0}{L^2}^2 + \norm{\Delta_h \phih^0}{L^2}^2 \le C,
	\label{eq:initial-stability}
	\end{equation}
for some constant $C>0$ that is independent of $h$ and $\tau$. Here we assume that $\varepsilon>0$ is fixed.  In fact, from this point in the stability and error analyses, we will not track the dependence of the estimates on the interface parameter $\varepsilon$, though this may be of importance, especially if $\varepsilon$ tends to zero.

	\begin{lem}
	\label{lem-a-priori-stability-trivial}
Let $(\phih^{m+1}, \muh^{m+\frac12}) \in S_h\times S_h$ be the unique solution of  \eqref{eq:scheme-a} -- \eqref{eq:scheme-b}, and $(\phih^1, \muh^{\frac12}) \in S_h \times S_h$, the unique solution of \eqref{eq:scheme-a-initial} -- \eqref{eq:scheme-b-initial}.  Then the following estimates hold for any $h,\, \tau>0$:
	\begin{align}
\max_{0\leq m\leq M} \left[ \norm{\nabla\phih^m}{L^2}^2 + \norm{\left( \phih^m\right)^2-1}{L^2}^2 \right] &\leq C, 
	\label{eq:Linf-phi-discrete}
	\\
\max_{0\leq m\leq M}\left[\norm{\phih^m}{L^4}^4 +\norm{\phih^m}{L^2}^2  +\norm{\phih^m}{H^1}^2 \right] &\leq C,
	\label{eq:Linf-H1-phi-discrete}
	\\
\max_{1\leq m\leq M}\left[\norm{\phih^m-\phih^{m-1}}{L^2}^2 + \norm{\nabla \phih^m - \nabla \phih^{m-1}}{L^2}^2\right] &\leq C,
	\label{eq:Linf-phi-diff-discrete}
	\\
\tau \sum_{m=0}^{M-1} \norm{\nabla\muh^{m+\frac12}}{L^2}^2  &\leq C , 
	\label{eq:sum-mu-discrete}
	\\
\sum_{m=1}^{M-1} \left[ \norm{\phih^{m+1} - 2 \phih^m + \phih^{m-1}}{L^2}^2 + \norm{\nabla \phih^{m+1} - 2 \nabla \phih^m + \nabla \phih^{m-1}}{L^2}^2 \right] &\leq C , 
	\label{eq:sum-phi-discrete}
	\end{align}
for some constant $C>0$ that is independent of $h$, $\tau$, and $T$.
	\end{lem}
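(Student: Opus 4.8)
The plan is to bootstrap everything from the two energy laws already in hand---the first-step law of Lemma~\ref{lem-energy-law-initial} and the general law of Lemma~\ref{lem-energy-law}---together with the initial stability assumption \eqref{eq:initial-stability} and the fact that $E\ge 0$.

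First I would bound the right-hand side $F(\phih^1,\phih^0)$ of \eqref{eq:ConvSplitEnLaw}. Lemma~\ref{lem-energy-law-initial} already gives $E(\phih^1)+\frac{1}{4\varepsilon}\norm{\phih^1-\phih^0}{L^2}^2\le E(\phih^0)+\frac{\varepsilon\tau^2}{4}\norm{\Delta_h\muh^0}{L^2}^2\le C$, the last bound being \eqref{eq:initial-stability}. The only remaining piece of $F(\phih^1,\phih^0)$ is $\frac{\varepsilon}{8}\norm{\nabla\phih^1-\nabla\phih^0}{L^2}^2$; since $E(\phih^0)\le C$ and $E(\phih^1)\le C$ control the respective Dirichlet energies through the explicit decomposition \eqref{eq:continuous-energy}, the triangle inequality bounds this term crudely, so $F(\phih^1,\phih^0)\le C$.

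Next, because $E\ge 0$, every term on the left-hand side of \eqref{eq:ConvSplitEnLaw} is nonnegative, and since the law is an exact equality there is no accumulation in $\ell$. Taking $\ell=M-1$ and keeping one group of terms at a time therefore yields $F(\phih^{\ell+1},\phih^\ell)\le C$ for every $1\le\ell\le M-1$, the sum bound \eqref{eq:sum-phi-discrete}, and $\tau\varepsilon\sum_{m=1}^{M-1}\norm{\nabla\muh^{m+\frac12}}{L^2}^2\le C$; adding $\tau\varepsilon\norm{\nabla\muh^{\frac12}}{L^2}^2\le C$ from \eqref{eq:ConvSplitEnLaw-initial} and dividing by the fixed $\varepsilon$ gives \eqref{eq:sum-mu-discrete}. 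From $F(\phih^{\ell+1},\phih^\ell)\le C$ one reads off $E(\phih^{\ell+1})\le C$ and $\norm{\phih^{\ell+1}-\phih^\ell}{L^2}^2+\norm{\nabla\phih^{\ell+1}-\nabla\phih^\ell}{L^2}^2\le C$ for $\ell=1,\dots,M-1$; combined with the $m=1$ difference bound from Lemma~\ref{lem-energy-law-initial} (and, for the gradient difference at $m=1$, with $\norm{\nabla\phih^0}{L^2},\norm{\nabla\phih^1}{L^2}\le C$), this gives \eqref{eq:Linf-phi-diff-discrete}. Moreover $E(\phih^m)\le C$ for all $0\le m\le M$: the case $m=0$ is \eqref{eq:initial-stability}, $m=1$ is Lemma~\ref{lem-energy-law-initial}, and $m\ge 2$ is the bound just obtained. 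The explicit form \eqref{eq:continuous-energy} then turns $E(\phih^m)\le C$ into $\norm{\nabla\phih^m}{L^2}^2\le C$ and $\norm{(\phih^m)^2-1}{L^2}^2\le C$, which is \eqref{eq:Linf-phi-discrete}; consequently $\norm{\phih^m}{L^4}^4=\norm{(\phih^m)^2}{L^2}^2\le 2\norm{(\phih^m)^2-1}{L^2}^2+2|\Omega|\le C$, whence $\norm{\phih^m}{L^2}^2\le|\Omega|^{1/2}\norm{\phih^m}{L^4}^2\le C$ by H\"older's inequality and $\norm{\phih^m}{H^1}^2\le C$, establishing \eqref{eq:Linf-H1-phi-discrete}. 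Since all bounds trace back to the constant in \eqref{eq:initial-stability}, they are independent of $h$, $\tau$, and $T$.

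There is no essential difficulty here; the only step requiring care is the separate treatment of the initialization, in particular bounding $\norm{\nabla\phih^1-\nabla\phih^0}{L^2}^2$ inside $F(\phih^1,\phih^0)$, a quantity that does not appear in the first-step energy law \eqref{eq:ConvSplitEnLaw-initial} and must be controlled via the individual Dirichlet energies rather than as a single difference.
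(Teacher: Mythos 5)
Your proof is correct and follows essentially the same route as the paper: bound $F(\phih^1,\phih^0)$ via the first-step energy law, the initial stability assumption, and the triangle inequality for the gradient-difference term, then read off all five estimates from the telescoping energy law \eqref{eq:ConvSplitEnLaw} and the fact that $F\ge E\ge 0$. The paper's version is terser (it leaves the final unpacking of $E(\phih^m)\le C$ into the individual norm bounds implicit), but the substance is identical.
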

	
	\begin{proof}
Starting with the stability of the initial step, inequality \eqref{eq:ConvSplitEnLaw-initial}, and considering the stability of the initial data, inequality~\eqref{eq:initial-stability}, we immediately have
	\begin{equation}
\norm{\nabla\phih^1}{L^2}^2 + \norm{\left( \phih^1\right)^2-1}{L^2}^2 +\norm{\phih^1}{L^4}^4 +\norm{\phih^1}{L^2}^2  +\norm{\phih^1}{H^1}^2 + \tau\norm{\nabla\muh^{\frac12}}{L^2}^2 \leq C .
	\end{equation}
The triangle inequality immediately implies
	\begin{align*}
F\left(\phih^1, \phih^0\right) = E(\phih^1) + \frac{1}{4\varepsilon} \norm{\phih^1 - \phih^0}{L^2}^2 + \frac{\varepsilon}{8} \norm{\nabla \phih^1 - \nabla \phih^0}{L^2}^2 \le C.
	\end{align*}
This, together with \eqref{eq:ConvSplitEnLaw} and the fact that $F(\phih^{m+1},\phih^m) \ge E(\phih^{m+1})$, for all $0\le m \le M-1$, establishes all of the inequalities.
	\end{proof}
	
We are able to prove the next set of \emph{a priori} stability estimates without any restrictions on $h$ and $\tau$.  See \cite{diegel14} for a definition of discrete negative norm $\norm{ \, \cdot\, }{-1,h}$.
	
	\begin{lem}
	\label{lem-improved-a-priori-stabilities}
Let $(\phih^{m+1}, \muh^{m+\frac12}) \in S_h\times S_h$ be the unique solution of  \eqref{eq:scheme-a} -- \eqref{eq:scheme-b}, and $(\phih^1, \muh^{\frac12}) \in S_h \times S_h$, the unique solution of \eqref{eq:scheme-a-initial} -- \eqref{eq:scheme-b-initial}. Then the following estimates hold for any $h,\, \tau >0$:
	\begin{align}
\tau \sum_{m=0}^{M-1} \Bigg[ \norm{\dtau \phih^{m+\frac12}}{H^{-1}}^2 +\norm{\dtau \phih^{m+\frac12}}{-1,h}^2 \Bigg] &\le C,
	\label{eq:sum-3D-good-dtau}
	\\
\tau \sum_{m=0}^{M-1}  \norm{\muh^{m+\frac12}}{L^2}^2 &\le C (T+1),
	\label{eq:sum-3D-good-mu}
	\\
\tau \sum_{m=1}^{M-1} \Bigg[	\norm{ \Delta_h \phich^{m+\frac12}}{L^2}^2  + \norm{\phich^{m+\frac12}}{L^\infty}^{\frac{4(6-d)}{d}} \Bigg] &\le C (T+1),
	\label{eq:sum-3D-good-phicheck}
	\end{align}
for some constant $C>0$ that is independent of $h$, $\tau$, and $T$.
	\end{lem}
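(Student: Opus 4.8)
The three estimates are obtained by a bootstrap that exploits the \emph{a priori} bounds already collected in Lemma~\ref{lem-a-priori-stability-trivial}, in particular the uniform bounds \eqref{eq:Linf-H1-phi-discrete} and the summed bound \eqref{eq:sum-mu-discrete}. For \eqref{eq:sum-3D-good-dtau}, I first observe that testing \eqref{eq:scheme-a} (respectively \eqref{eq:scheme-a-initial} for $m=0$) with $\nu\equiv 1$ shows $\iprd{\dtau\phih^{m+\frac12}}{1}=0$, so $\dtau\phih^{m+\frac12}\in\Soh$ and its discrete negative norm is defined. Testing instead with $\nu = (-\Delta_h)^{-1}\dtau\phih^{m+\frac12}\in\Soh$ and using \eqref{eq:discrete-Laplacian} gives $\norm{\dtau\phih^{m+\frac12}}{-1,h}^2 = -\varepsilon\,\aiprd{\muh^{m+\frac12}}{(-\Delta_h)^{-1}\dtau\phih^{m+\frac12}} \le \varepsilon\,\norm{\nabla\muh^{m+\frac12}}{L^2}\,\norm{\dtau\phih^{m+\frac12}}{-1,h}$, hence $\norm{\dtau\phih^{m+\frac12}}{-1,h}\le\varepsilon\norm{\nabla\muh^{m+\frac12}}{L^2}$; multiplying by $\tau$, summing over $m$, and invoking \eqref{eq:sum-mu-discrete} yields the $-1,h$ part. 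The $H^{-1}$ part then follows from the equivalence of $\norm{\cdot}{-1,h}$ and $\norm{\cdot}{H^{-1}}$ on $\Soh$ for quasi-uniform meshes \cite{diegel14}.

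For \eqref{eq:sum-3D-good-mu}, I control the spatial mean of $\muh^{m+\frac12}$ by testing \eqref{eq:scheme-b} (respectively \eqref{eq:scheme-b-initial}) with $\psi\equiv 1$. Since $\aiprd{\phich^{m+\frac12}}{1} = \aiprd{\phih^{\frac12}}{1} = \aiprd{\muh^0}{1} = 0$, this yields $\iprd{\muh^{m+\frac12}}{1} = \varepsilon^{-1}\iprd{\chi\left(\phih^{m+1},\phih^m\right)}{1} - \varepsilon^{-1}\iprd{\phitil^{m+\frac12}}{1}$ (with the obvious modification for the initialization step), and the right-hand side is bounded independently of $m$, $h$, $\tau$ by H\"older's inequality together with the uniform $L^4$ and $L^2$ bounds of \eqref{eq:Linf-H1-phi-discrete}. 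A Poincar\'e--Wirtinger inequality then gives $\norm{\muh^{m+\frac12}}{L^2}^2 \le C\norm{\nabla\muh^{m+\frac12}}{L^2}^2 + C$; multiplying by $\tau$, summing, using $\tau M = T$ and \eqref{eq:sum-mu-discrete} produces the bound $C(T+1)$.

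For \eqref{eq:sum-3D-good-phicheck}, I test \eqref{eq:scheme-b} with $\psi = \Delta_h\phich^{m+\frac12}\in\Soh$ and use $\aiprd{\phich^{m+\frac12}}{\Delta_h\phich^{m+\frac12}} = -\norm{\Delta_h\phich^{m+\frac12}}{L^2}^2$ to obtain $\varepsilon\norm{\Delta_h\phich^{m+\frac12}}{L^2}^2 = \varepsilon^{-1}\iprd{\chi\left(\phih^{m+1},\phih^m\right)}{\Delta_h\phich^{m+\frac12}} - \varepsilon^{-1}\iprd{\phitil^{m+\frac12}}{\Delta_h\phich^{m+\frac12}} - \iprd{\muh^{m+\frac12}}{\Delta_h\phich^{m+\frac12}}$. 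Applying Cauchy--Schwarz and Young's inequality to absorb a term $\frac{\varepsilon}{2}\norm{\Delta_h\phich^{m+\frac12}}{L^2}^2$, it remains to bound $\norm{\chi\left(\phih^{m+1},\phih^m\right)}{L^2}$ and $\norm{\phitil^{m+\frac12}}{L^2}$ uniformly: the latter is immediate from \eqref{eq:Linf-H1-phi-discrete}, and the former follows from the pointwise bound $\left|\chi\left(\phih^{m+1},\phih^m\right)\right| \le C\left(|\phih^{m+1}|^3 + |\phih^m|^3\right)$ together with the embedding $H^1(\Omega)\hookrightarrow L^6(\Omega)$ (valid for $d\le 3$) and \eqref{eq:Linf-H1-phi-discrete}. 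This gives $\norm{\Delta_h\phich^{m+\frac12}}{L^2}^2 \le C + C\norm{\muh^{m+\frac12}}{L^2}^2$; multiplying by $\tau$, summing over $1\le m\le M-1$, and using \eqref{eq:sum-3D-good-mu} yields the first half of \eqref{eq:sum-3D-good-phicheck}. For the $L^\infty$ term, I apply a discrete Gagliardo--Nirenberg (Agmon-type) inequality of the form $\norm{v_h}{L^\infty} \le C\norm{v_h}{H^1}^{1-\frac{d}{2(6-d)}}\left(\norm{\Delta_h v_h}{L^2} + \norm{v_h}{H^1}\right)^{\frac{d}{2(6-d)}}$ for all $v_h\in S_h$; the exponent $\frac{4(6-d)}{d}$ in the statement is tuned precisely so that, after using the uniform bound $\norm{\phich^{m+\frac12}}{H^1}\le C$ from \eqref{eq:Linf-H1-phi-discrete}, the power of $\norm{\Delta_h\phich^{m+\frac12}}{L^2}$ produced is exactly $2$, i.e. $\norm{\phich^{m+\frac12}}{L^\infty}^{\frac{4(6-d)}{d}} \le C + C\norm{\Delta_h\phich^{m+\frac12}}{L^2}^2$, which is summable by the bound just established.

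I expect the main obstacle to be the discrete Gagliardo--Nirenberg/Agmon inequality above: making it rigorous requires that $\norm{\Delta_h v_h}{L^2}$ control a genuine discrete $H^2$-type quantity for $v_h\in S_h$, which rests on the quasi-uniformity of $\mathcal{T}_h$ (via an inverse inequality) and a duality/elliptic-regularity argument comparing $v_h$ with the exact solution of a Poisson problem with right-hand side $\Delta_h v_h$. A secondary, bookkeeping-level point is keeping the bootstrap correctly ordered -- \eqref{eq:sum-3D-good-dtau} and the mean bound in \eqref{eq:sum-3D-good-mu} depend only on Lemma~\ref{lem-a-priori-stability-trivial} and \eqref{eq:sum-mu-discrete}, while \eqref{eq:sum-3D-good-phicheck} depends on \eqref{eq:sum-3D-good-mu} -- and handling the initialization step $m=0$ separately because of the extra term $\frac{\tau}{2}\aiprd{\muh^0}{\psi}$ in \eqref{eq:scheme-b-initial}, which is harmless since it is annihilated by $\psi\equiv 1$ and does not appear in the $\Delta_h\phich^{m+\frac12}$ test (used only for $m\ge 1$).
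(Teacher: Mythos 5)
Your proposal is correct and follows the same overall strategy as the paper: a bootstrap from the energy-law bounds of Lemma~\ref{lem-a-priori-stability-trivial}, with the test function $\psi = \Delta_h\phich^{m+\frac12}$ for the second-derivative bound and the discrete Gagliardo--Nirenberg inequality \eqref{eq:infinity-bound} (your Agmon-type form is algebraically the same, with the exponent $\frac{4(6-d)}{d}$ playing exactly the role you describe). You deviate in two places, both harmlessly. For \eqref{eq:sum-3D-good-dtau} the paper first bounds the $H^{-1}$ norm directly, by inserting the $L^2$ projection $\mathcal{Q}_h\nu$ of an arbitrary $\nu\in\mathring{H}^1(\Omega)$ into \eqref{eq:scheme-a} and invoking the $H^1$-stability of $\mathcal{Q}_h$, and then gets the $-1,h$ bound for free from $\norm{\cdot}{-1,h}\le\norm{\cdot}{H^{-1}}$; you go the other way, proving the $-1,h$ bound by testing with $(-\Delta_h)^{-1}\dtau\phih^{m+\frac12}$ and then appealing to the reverse inequality on $\Soh$. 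Note that the reverse inequality is the nontrivial direction and its proof uses precisely the $H^1$-stability of the $L^2$ projection (hence quasi-uniformity), so you have not avoided that ingredient, only relocated it. For \eqref{eq:sum-3D-good-mu} the paper simply sets $\psi=\muh^{m+\frac12}$ in \eqref{eq:scheme-b} and \eqref{eq:scheme-b-initial} and absorbs terms by Young's inequality, arriving at $\norm{\muh^{m+\frac12}}{L^2}^2\le\norm{\nabla\muh^{m+\frac12}}{L^2}^2+C$; your mean-value-plus-Poincar\'e--Wirtinger argument reaches the same inequality and is, if anything, slightly more economical since the mean only needs the $L^4$ bound rather than the $L^6$ embedding (though you still need $\norm{\chi(\phih^{m+1},\phih^m)}{L^2}\le C$, via $H^1\hookrightarrow L^6$, for the third estimate). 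Your remarks on the bootstrap ordering and on the $m=0$ initialization term are accurate, and the paper indeed states \eqref{eq:infinity-bound} without proof, so flagging it as the main technical debt is fair.
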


	\begin{proof}
Let $\mathcal{Q}_h: L^2(\Omega) \to S_h$ be the $L^2$ projection, \emph{i.e.}, $\iprd{\mathcal{Q}_h \nu-\nu}{\xi}=0$ for all $\xi\in S_h$. Suppose $\nu\in\mathring{H}^1(\Omega)$. Then, by \eqref{eq:scheme-a} and \eqref{eq:scheme-a-initial}, for all $0 < m < M-1$
	\begin{align}
\iprd{\dtau\phih^{m+\frac12}}{\nu}= \iprd{\dtau\phih^{m+\frac12}}{\mathcal{Q}_h\nu}  =-\varepsilon \iprd{\nabla \muh^{m+\frac12}}{\nabla \mathcal{Q}_h\nu} &\leq \varepsilon\norm{\nabla\muh^{m+\frac12}}{L^2} \norm{\nabla \mathcal{Q}_h\nu}{L^2} 
	\nonumber
	\\
&\leq C \varepsilon\norm{\nabla\muh^{m+\frac12}}{L^2} \norm{\nabla \nu}{L^2},
	\end{align}	
where we used the $H^1$ stability of the $L^2$ projection in the last step. Applying $\tau\sum_{m=0}^{M-1}$ and using \eqref{eq:sum-mu-discrete} we obtain (\ref{eq:sum-3D-good-dtau}.1) -- which, in our notation, is the bound on the first term of the left side of \eqref{eq:sum-3D-good-dtau}. The estimate (\ref{eq:sum-3D-good-dtau}.2) follows from the inequality $\norm{\nu}{-1,h} \le \norm{\nu}{H^{-1}}$, which holds for all $\nu\in L^2(\Omega)$.

To prove \eqref{eq:sum-3D-good-mu}, for $1 \le m \le M-1$ we set $\psi = \muh^{m+\frac12}$ in \eqref{eq:scheme-b} to obtain
	\begin{align*}
\norm{\muh^{m+\frac12}}{L^2}^2 =& \, \varepsilon^{-1} \iprd{\chi\left(\phih^{m+1},\phih^m\right)}{\muh^{m+\frac12}} -  \varepsilon^{-1} \iprd{\phitil^{m+\frac12}}{\muh^{m+\frac12}} + \varepsilon \, \aiprd{\phich^{m+\frac12}}{\muh^{m+\frac12}} 
	\\
\le& \, C \norm{\chi\left(\phih^{m+1},\phih^m\right)}{L^2}^2 + \frac14 \norm{\muh^{m+\frac12}}{L^2}^2 + C \norm{\phitil^{m+\frac12}}{L^2}^2 + \frac14 \norm{\muh^{m+\frac12}}{L^2}^2 
	\\
&+ C \norm{\nabla \phich^{m+\frac12}}{L^2}^2 + \frac12 \norm{\nabla \muh^{m+\frac12}}{L^2}^2.
	\end{align*}
And, similarly, setting $\psi = \muh^{\frac12}$ in \eqref{eq:scheme-b-initial}, we have
	\begin{align*}
\norm{\muh^{\frac12}}{L^2}^2 \le& \, C \norm{\chi\left(\phih^{1},\phih^0\right)}{L^2}^2 + \frac16 \norm{\muh^{\frac12}}{L^2}^2 + C \norm{\phih^0}{L^2}^2 + \frac16 \norm{\muh^{\frac12}}{L^2}^2 + C \norm{\nabla \phih^{\frac12}}{L^2}^2 
	\\
&+ \frac12 \norm{\nabla \muh^{\frac12}}{L^2}^2 + \frac16 \norm{\muh^{\frac12}}{L^2}^2 + C \tau^2 \norm{\Delta_h \muh^0}{L^2}^2.
	\end{align*}
Hence, using the triangle inequality, \eqref{eq:Linf-H1-phi-discrete}, and the initial stability \eqref{eq:initial-stability}, we have for all $0 \le m \le M-1$, 
	\begin{align*}
\frac12 \norm{\muh^{m+\frac12}}{L^2}^2 \le& \, C \norm{\chi\left(\phih^{m+1},\phih^m\right)}{L^2}^2 + \frac12 \norm{\nabla \muh^{m+\frac12}}{L^2}^2 + C.
	\end{align*}
Now, using Lemma \ref{lem-a-priori-stability-trivial}, we have the following bound for all $0 \le m \le M-1$
	\begin{align}
\norm{\chi\left(\phih^{m+1},\phih^m\right)}{L^2}^2 =& \,  \frac{1}{16}\norm{\left(\phih^{m+1}\right)^3 + \left(\phih^{m+1}\right)^2 \phih^m + \phih^{m+1} \left(\phih^m\right)^2 + \left(\phih^{m}\right)^3}{L^2}^2
	\nonumber
	\\
\le& \, C \norm{\left(\phih^{m+1}\right)^3}{L^2}^2 + C \norm{\left(\phih^{m+1}\right)^2 \phih^m}{L^2}^2 + C \norm{\phih^{m+1} \left(\phih^m\right)^2}{L^2}^2 + C \norm{\left(\phih^{m}\right)^3}{L^2}^2
	\nonumber
	\\
\le& \, C \norm{\phih^{m+1}}{L^6}^6 + C \norm{\phih^m}{L^6}^6 \le C \norm{\phih^{m+1}}{H^1}^6 + C \norm{\phih^{m}}{H^1}^6 \le  C,
	\label{eq:chi-L2-stability}
	\end{align}
where we used Young's inequality and the embedding $H^1(\Omega) \hookrightarrow L^6(\Omega)$, for $d = 2, 3$. Hence,
	\begin{equation}
\norm{\muh^{m+\frac12}}{L^2}^2 \le \norm{\nabla\muh^{m+\frac12}}{L^2}^2 + C.
	\end{equation}
Applying $\tau\sum_{m=0}^{M-1}$, estimate \eqref{eq:sum-3D-good-mu} now follows from (\ref{eq:sum-mu-discrete}).

Setting $\psi_h= \Delta_h \phich^{m+\frac12}$ in \eqref{eq:scheme-b} and using the  definition of the discrete Laplacian \eqref{eq:discrete-Laplacian}, it follows that for all $1 \le m \le M-1$
	\begin{align*}
\varepsilon \norm{\Delta_h \phich^{m+\frac12}}{L^2}^2  =& \, -  \varepsilon \,\aiprd{\phich^{m+\frac12}}{ \Delta_h \phich^{m+\frac12}}
	\\
=& \, -\iprd{\muh^{m+\frac12}}{\Delta_h \phich^{m+\frac12}} - \varepsilon^{-1} \, \iprd{\phitil^{m+\frac12}}{\Delta_h \phich^{m+\frac12}} 
	\\
&+ \varepsilon^{-1} \,\iprd{\chi\left(\phih^{m+1},\phih^m\right)}{\Delta_h \phich^{m+\frac12}}
	\\
=& \, \aiprd{ \muh^{m+\frac12}}{\phich^{m+\frac12}} - \varepsilon^{-1} \,\iprd{\phitil^{m+\frac12}}{\Delta_h \phich^{m+\frac12}}
	\\
&+ \varepsilon^{-1} \,\iprd{\chi\left(\phih^{m+1},\phih^m\right)}{\Delta_h \phich^{m+\frac12}}
	\\
\le& \, \frac12\norm{\nabla\muh^{m+\frac12}}{L^2}^2 +\frac12 \norm{\nabla\phich^{m+\frac12}}{L^2}^2  + C \norm{\phitil^{m+\frac12}}{L^2}^2 + \frac{\varepsilon}4 \norm{\Delta_h \phich^{m+\frac12}}{L^2}^2 
	\\
&+ C \norm{\chi\left(\phih^{m+1},\phih^m\right)}{L^2}^2 + \frac{\varepsilon}4 \norm{\Delta_h \phich^{m+\frac12}}{L^2}^2.
	\end{align*}
Using the triangle inequality, \eqref{eq:Linf-H1-phi-discrete}, and \eqref{eq:chi-L2-stability}, we have 
	\begin{equation}
\varepsilon \norm{ \Delta_h \phich^{m+\frac12}}{L^2}^2 \leq \norm{\nabla \muh^{m+\frac12}}{L^2}^2  + C.
	\label{eq:discrete-Laplacian-phi-middle}
	\end{equation}
Applying $\tau\sum_{m=1}^{M-1}$, estimate (\ref{eq:sum-3D-good-phicheck}.1) now follows from (\ref{eq:sum-mu-discrete}).

To prove estimate (\ref{eq:sum-3D-good-phicheck}.2), we use the discrete Gagliardo-Nirenberg inequality:
	\begin{equation}
\norm{\psi_h^m}{L^\infty} \leq C\norm{\Delta_h \psi_h^m}{L^2}^{\frac{d}{2(6-d)}} \,\norm{\psi_h^m}{L^6}^{\frac{3(4-d)}{2(6-d)}} + C\norm{\psi_h^m}{L^6} \quad \forall \, \psi \in S_h, \qquad (d=2,3) .
	\label{eq:infinity-bound}
	\end{equation}
Applying $\tau\sum_{m=1}^{M-1}$ and using $H^1(\Omega) \hookrightarrow L^6(\Omega)$, (\ref{eq:Linf-H1-phi-discrete}), and (\ref{eq:sum-3D-good-phicheck}.1), estimate (\ref{eq:sum-3D-good-phicheck}.2) follows.  
	\end{proof}

\begin{lem}
	\label{lem-a-priori-stability-dtau-mu}
Let $(\phih^{m+1}, \muh^{m+\frac12}) \in S_h\times S_h$ be the unique solution of  \eqref{eq:scheme-a} -- \eqref{eq:scheme-b}, and $(\phih^1, \muh^{\frac12}) \in S_h\times S_h$, the unique solution of \eqref{eq:scheme-a-initial} -- \eqref{eq:scheme-b-initial}. Assume that $\norm{\muh^{0}}{L^2}^2 \le C$, independent of $h$. Then the following estimates hold for any $h,\, \tau >0$:
	\begin{align}
\tau \sum_{m=0}^{M-1} \norm{\dtau\phih^{m+\frac12}}{L^2}^2 & \le C (T+1),
	\label{eq:sum-dtau-phi}
	\\
\max_{0\le m\le M-1} \norm{\muh^{m+\frac12}}{L^2}^2 &\le C (T+1),
	\label{eq:Linf-mu} 
	\end{align}
for some constant $C>0$ that is independent of $h$, $\tau$, and $T$.
	\end{lem}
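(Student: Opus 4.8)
The plan is to derive both estimates simultaneously from a single discrete-in-time energy inequality obtained by differencing the chemical-potential equation~\eqref{eq:scheme-b} and testing with $\muh^{m+\frac12}$. For $2\le m\le M-1$, subtract the instance of \eqref{eq:scheme-b} at level $m-1$ from the one at level $m$ and test the difference with $\psi=\muh^{m+\frac12}$; for $m=1$, the initialization \eqref{eq:scheme-b-initial} plays the role of the level-$0$ equation and produces an entirely analogous identity with one extra term $-\frac{\tau}{2}\aiprd{\muh^0}{\muh^{\frac32}}$, harmless because of the hypothesis $\norm{\muh^0}{L^2}^2\le C$ and~\eqref{eq:scheme-a-initial}. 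Since $\phich^{m+\frac12}-\phich^{m-\frac12}=\tau\bigl(\frac34\dtau\phih^{m+\frac12}+\frac14\dtau\phih^{m-\frac32}\bigr)$ and $\phitil^{m+\frac12}-\phitil^{m-\frac12}=\tau\bigl(\frac32\dtau\phih^{m-\frac12}-\frac12\dtau\phih^{m-\frac32}\bigr)$, the $\muh$-difference term contributes the telescoping quantity $\frac12\bigl(\norm{\muh^{m+\frac12}}{L^2}^2-\norm{\muh^{m-\frac12}}{L^2}^2\bigr)$ plus the nonnegative $\frac12\norm{\muh^{m+\frac12}-\muh^{m-\frac12}}{L^2}^2$, and — using \eqref{eq:scheme-a} to rewrite $\varepsilon\,\aiprd{\muh^{m+\frac12}}{\cdot}=-\iprd{\dtau\phih^{m+\frac12}}{\cdot}$ — the $\aiprd{\cdot}{\cdot}$-term contributes, once moved to the left side, $\frac{3\tau}{4}\norm{\dtau\phih^{m+\frac12}}{L^2}^2$ together with an absorbable cross term $\frac{\tau}{4}\iprd{\dtau\phih^{m+\frac12}}{\dtau\phih^{m-\frac32}}$. (This replacement is exactly where the equality of the discrete $\phi$- and $\mu$-spaces, noted in the remark after the solvability theorem, is needed.)

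All remaining terms lie on the right-hand side and must be bounded with no restriction on $h$ or $\tau$. The linear term $-\varepsilon^{-1}\tau\iprd{\frac32\dtau\phih^{m-\frac12}-\frac12\dtau\phih^{m-\frac32}}{\muh^{m+\frac12}}$ is handled by Cauchy--Schwarz and Young's inequality, producing contributions that are absorbed into the dissipation after summation, plus $C\tau\norm{\muh^{m+\frac12}}{L^2}^2$, which sums to $C(T+1)$ by~\eqref{eq:sum-3D-good-mu}. The essential estimate is the nonlinear one. By the telescoping factorization of the cubic $\chi$, write $\chi(\phih^{m+1},\phih^m)-\chi(\phih^m,\phih^{m-1})=\tau\,\dtau\phih^{m+\frac12}\,P_1+\tau\,\dtau\phih^{m-\frac12}\,P_2$, where $P_1,P_2$ are quadratic in $\phih^{m+1},\phih^m,\phih^{m-1}$, so that $\norm{P_i}{L^3}\le C$ by~\eqref{eq:Linf-H1-phi-discrete} and $H^1(\Omega)\hookrightarrow L^6(\Omega)$. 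H\"older's inequality with exponents $(2,3,6)$, together with the bound $\norm{\muh^{m+\frac12}}{L^2}^2\le\norm{\nabla\muh^{m+\frac12}}{L^2}^2+C$ established in the proof of Lemma~\ref{lem-improved-a-priori-stabilities}, then gives
\[
\varepsilon^{-1}\tau\,\bigl|\iprd{\dtau\phih^{m+\frac12}\,P_1}{\muh^{m+\frac12}}\bigr|\le\frac{\tau}{10}\norm{\dtau\phih^{m+\frac12}}{L^2}^2+C\tau\norm{\nabla\muh^{m+\frac12}}{L^2}^2+C\tau,
\]
and likewise for the $P_2$ term. The $\frac{\tau}{10}\norm{\dtau\phih^{m+\frac12}}{L^2}^2$ pieces are absorbed into the $\frac{3\tau}{4}\norm{\dtau\phih^{m+\frac12}}{L^2}^2$ on the left, the pieces $C\tau\norm{\nabla\muh^{m+\frac12}}{L^2}^2$ sum to a constant by~\eqref{eq:sum-mu-discrete}, and the pieces $C\tau$ sum to $CT$ — this is the origin of the factor $T$.

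Summing over $m=1,\dots,\ell$ and collecting (after absorbing the cross terms and the reindexed $\dtau\phih$-contributions of the linear term), one obtains, for every $1\le\ell\le M-1$ and a fixed $c>0$,
\[
c\,\tau\sum_{m=1}^{\ell}\norm{\dtau\phih^{m+\frac12}}{L^2}^2+\frac12\norm{\muh^{\ell+\frac12}}{L^2}^2\le\frac12\norm{\muh^{\frac12}}{L^2}^2+C(T+1).
\]
It remains to bound $\norm{\muh^{\frac12}}{L^2}^2$ and $\norm{\dtau\phih^{\frac12}}{L^2}^2$ by a constant independent of $h$ and $\tau$; this is where the hypothesis $\norm{\muh^0}{L^2}^2\le C$ and the initial stability~\eqref{eq:initial-stability} are used. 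Testing \eqref{eq:scheme-b-initial} against appropriate functions, using $\dtau\phih^{\frac12}=\varepsilon\Delta_h\muh^{\frac12}$ (which follows from \eqref{eq:scheme-a-initial}), and invoking~\eqref{eq:chi-L2-stability}, $\tau\norm{\Delta_h\muh^0}{L^2}\le C$, and $\norm{\Delta_h\phih^0}{L^2}\le C$, one obtains $\norm{\muh^{\frac12}}{L^2}^2+\norm{\dtau\phih^{\frac12}}{L^2}^2\le C$. Substituting into the displayed inequality produces~\eqref{eq:sum-dtau-phi}, and, since its right-hand side is then $\le C(T+1)$ uniformly in $\ell$, also~\eqref{eq:Linf-mu}.

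I expect the crux to be the nonlinear estimate above: after extracting the factor $\tau$ from the difference of cubics, the only non-absorbable remainders are $C\tau\norm{\nabla\muh^{m+\frac12}}{L^2}^2$ and $C\tau$, both summable by bounds already in hand, which is precisely what keeps the estimate unconditional in $\tau$ and $h$. The secondary difficulty is the first-step analysis: proving $\norm{\muh^{\frac12}}{L^2},\norm{\dtau\phih^{\frac12}}{L^2}\le C$ requires a short, self-contained but somewhat delicate treatment of the initialization \eqref{eq:scheme-a-initial}--\eqref{eq:scheme-b-initial}.
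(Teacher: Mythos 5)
Your proposal is correct and follows essentially the same route as the paper: the paper's proof also differences \eqref{eq:scheme-b} at consecutive levels (with \eqref{eq:scheme-b-initial} serving as the level-$0$ equation for $m=1$), uses the factorization $\chi(\phih^{m+1},\phih^m)-\chi(\phih^m,\phih^{m-1})=\frac14\omega_h^m(\phih^{m+1}-\phih^{m-1})$ with $\norm{\omega_h^m}{L^3}\le C$, absorbs the resulting $\dtau\phih$ terms, and sums the $\norm{\muh^{m+\frac12}}{H^1}^2$ remainders via \eqref{eq:sum-mu-discrete} and \eqref{eq:sum-3D-good-mu}; the only cosmetic difference is that the paper tests the differenced equation with the weighted average $\frac34\muh^{m+\frac12}+\frac14\muh^{m-\frac32}$ so that the $\dtau\phih$ contribution appears as a perfect square, whereas you test with $\muh^{m+\frac12}$ and absorb a cross term, which also works. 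One small correction: your first-step claim $\norm{\dtau\phih^{\frac12}}{L^2}^2\le C$ is stronger than what the stated hypotheses yield (it would require an unavailable bound on $\norm{\Delta_h\muh^{\frac12}}{L^2}$); the provable version is $\norm{\muh^{\frac12}}{L^2}^2+\tau\norm{\dtau\phih^{\frac12}}{L^2}^2\le C$, which is all your bookkeeping actually uses, since $\norm{\dtau\phih^{\frac12}}{L^2}^2$ only ever enters multiplied by $\tau$.
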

	
	\begin{proof}
The proof is divided into three parts.

\noindent \\ \textbf{Part 1:} We first establish
	\begin{equation}
\norm{\muh^{\frac12}}{L^2}^2 + \tau \, \norm{\dtau \phih^{\frac12}}{L^2}^2 \le C.
	\end{equation}
To this end, setting $\nu = \tau \dtau\phih^{\hf}$ in \eqref{eq:scheme-a-initial} and $\psi = 2\muh^{\hf}$ in \eqref{eq:scheme-b-initial} and adding the resulting equations, we have  
	\begin{align*}
2\norm{\muh^{\frac12}}{L^2}^2  + \tau \norm{\dtau \phih^{\frac12}}{L^2}^2 =& \, \frac{2}{\varepsilon} \iprd{\chi\left(\phih^1,\phih^0\right)}{\muh^{\frac12}} -\frac{2}{\varepsilon}\iprd{\phih^0}{\muh^{\hf}} - \tau \iprd{\Delta_h\muh^0}{\muh^{\frac12}} - 2\varepsilon\iprd{\Delta_h\phih^0}{\muh^{\hf}}
	\\
\le& \,  \norm{\muh^{\frac12}}{L^2}^2 + C \norm{\chi\left(\phih^1,\phih^0\right)}{L^2}^2 
	\\
&+ C \norm{\phi_h^0}{L^2}^2 + C \, \tau^2 \norm{\Delta_h \muh^0}{L^2}^2 + C \norm{\Delta_h \phih^0}{L^2}^2 .
	\end{align*}
Thus, 
	\begin{align}
\norm{\muh^{\frac12}}{L^2}^2 + \tau \norm{\dtau \phih^{\frac12}}{L^2}^2 &\le C,
	\label{eq:mu-initial-control}
	\end{align}
considering the initial stability \eqref{eq:initial-stability}, \eqref{eq:Linf-H1-phi-discrete}, and \eqref{eq:chi-L2-stability}.
	
\noindent \textbf{Part 2:} Next we prove that
	\begin{equation}
 \norm{\muh^{\frac32}}{L^2}^2 + \tau \, \norm{\dtau \phih^{\frac32}}{L^2}^2 \le C .
	\end{equation}
Setting $m=1$ in \eqref{eq:scheme-b} and subtracting \eqref{eq:scheme-b-initial}, we obtain
	\begin{align}
\iprd{\muh^{\frac32} - \muh^{\frac12}}{\psi} =& \, \varepsilon \,\aiprd{\phich^{\frac32} - \phih^{\frac12}}{\psi} - \frac{3}{2 \varepsilon} \iprd{ \phih^1 - \phih^0}{\psi} - \frac{\tau}{2} \aiprd{\muh^0}{\psi}
	\nonumber
	\\
&+ \varepsilon^{-1} \iprd{\chi\left(\phih^2, \phih^1\right) - \chi\left(\phih^1, \phih^0\right)}{\psi} 
	\\
=& \varepsilon \, \aiprd{\frac34 \tau \dtau \phih^{\frac32} + \frac14 \tau \dtau \phih^{\frac12}}{\psi} - \frac{3}{2 \varepsilon} \iprd{ \phih^1 - \phih^0}{\psi} - \frac{\tau}{2} \aiprd{\muh^0}{\psi}
	\nonumber
	\\
&+ \varepsilon^{-1} \iprd{\chi\left(\phih^2, \phih^1\right) - \chi\left(\phih^1, \phih^0\right)}{\psi} .
	\label{eq:scheme-b-staggered-first-step}	
	\end{align}
Additionally, we take a weighted average of \eqref{eq:scheme-a} with $m=1$ and \eqref{eq:scheme-a-initial} with the weights $\frac34$ and $\frac14$, respectively, to obtain,
	\begin{align}
\iprd{\frac34 \dtau \phih^{\frac32} + \frac14 \dtau \phih^{\frac12}}{\nu} = - \varepsilon \, \aiprd{\frac34 \muh^{\frac32} + \frac14 \muh^{\frac12}}{\nu}, \qquad \forall \ \nu\in S_h.
	\label{eq:scheme-a-staggered-first-step}	
	\end{align}
Taking $\psi = \frac34 \muh^{\frac32} + \frac14 \muh^{\frac12}$ in \eqref{eq:scheme-b-staggered-first-step}, $\nu = \frac{3\tau}{4} \dtau \phih^{\frac32} + \frac{\tau}{4} \dtau \phih^{\frac12}$ in \eqref{eq:scheme-a-staggered-first-step}, and adding the results yields 
	\begin{align*}
\iprd{\muh^{\frac32} - \muh^{\frac12}}{\frac34 \muh^{\frac32} + \frac14 \muh^{\frac12}} + \tau \norm{\frac34 \dtau \phih^{\frac32} + \frac14 \dtau \phih^{\frac12}}{L^2}^2 =& \, - \frac{3}{8 \varepsilon} \iprd{\phih^1 - \phih^0}{3 \muh^{\frac32} +  \muh^{\frac12}} 
	\\
&- \frac{\tau}{8\varepsilon} \aiprd{\muh^0}{3 \muh^{\frac32} +  \muh^{\frac12}}
	\\
&+ \frac{1}{4 \varepsilon} \iprd{\chi\left(\phih^2, \phih^1\right) - \chi\left(\phih^1, \phih^0\right)}{3 \muh^{\frac32} + \muh^{\frac12}} 
	\\
=& \, \, - \frac{3}{8 \varepsilon} \iprd{\phih^1 - \phih^0}{3\muh^{\frac32} +  \muh^{\frac12}} 
	\\
&+ \frac{\tau}{8\varepsilon} \iprd{\Delta_h \muh^0}{3 \muh^{\frac32} +  \muh^{\frac12}}
	\\
&+ \frac{1}{4 \varepsilon} \iprd{\chi\left(\phih^2, \phih^1\right) - \chi\left(\phih^1, \phih^0\right)}{3 \muh^{\frac32} + \muh^{\frac12}} 
	\\
\le& \,\frac14 \norm{\muh^{\frac32}}{L^2}^2 + C \norm{\muh^{\frac12}}{L^2}^2 + C \norm{\phih^1}{L^2}^2 
	\\
&+ C \norm{\phih^0}{L^2}^2 + C \tau^2 \norm{\Delta_h \muh^0}{L^2}^2 
	\\
&+ C \norm{\chi\left(\phih^2, \phih^1\right)}{L^2}^2 + C \norm{\chi\left(\phih^1, \phih^0\right)}{L^2}^2 
	\\
\le& \,\frac14 \norm{\muh^{\frac32}}{L^2}^2 + C \norm{\muh^{\frac12}}{L^2}^2 + C,
	\end{align*}
where we have used Young's inequality, \eqref{eq:initial-stability}, \eqref{eq:Linf-H1-phi-discrete}, and \eqref{eq:chi-L2-stability}. Considering Part 1 and the inequalities
	\begin{align*}
\norm{\frac34 \dtau \phih^{\frac32} + \frac14 \dtau \phih^{\frac12}}{L^2}^2 =& \,  \frac9{16} \norm{\dtau \phih^{\frac32}}{L^2}^2 + \frac38 \iprd{\dtau \phih^{\frac32}}{\dtau \phih^{\frac12}} + \frac1{16} \norm{\dtau \phih^{\frac12}}{L^2}^2
	\\
\ge& \, \frac9{16} \norm{\dtau \phih^{\frac32}}{L^2}^2 - \frac38 \norm{\dtau \phih^{\frac32}}{L^2} \norm{\dtau \phih^{\frac12}}{L^2} + \frac1{16} \norm{\dtau \phih^{\frac12}}{L^2}^2
	\\
\ge& \, \frac38 \norm{\dtau \phih^{\frac32}}{L^2}^2 - \frac18 \norm{\dtau \phih^{\frac12}}{L^2}^2,
	\end{align*}
and
	\begin{align*}
\iprd{\muh^{\frac32} - \muh^{\frac12}}{\frac34 \muh^{\frac32} + \frac14 \muh^{\frac12}} =& \frac34 \norm{\muh^{\frac32}}{L^2}^2 - \frac12 \iprd{\muh^{\frac32}}{\muh^{\frac12}} - \frac14 \norm{\muh^{\frac12}}{L^2}^2
	\\
\ge& \frac12 \norm{\muh^{\frac32}}{L^2}^2 - \frac12 \norm{\muh^{\frac12}}{L^2}^2,
	\end{align*}
we have, 
	\begin{align}
\frac14 \norm{\muh^{\frac32}}{L^2}^2 + \frac{3\tau}{8} \norm{\dtau \phih^{\frac32}}{L^2}^2 \le C \norm{\muh^{\frac12}}{L^2}^2 + \frac{\tau}{8} \norm{\dtau \phih^{\frac12}}{L^2}^2 + C \le C.
	\label{eq:mu-control-second-step}
	\end{align}

\noindent \textbf{Part 3:} Finally, we will establish
	\begin{equation}
\norm{\muh^{\ell+\frac12}}{L^2}^2 + \frac{\tau}{8} \sum\limits_{m=2}^{\ell} \norm{\dtau \phih^{m+\frac12}}{L^2}^2 \le C (T+1).
	\end{equation}
For $2 \le m \le M-1$, we subtract \eqref{eq:scheme-b} from itself at consecutive time steps to obtain
	\begin{align}
\iprd{\muh^{m+\frac12} - \muh^{m-\frac12}}{\psi} =& \, \varepsilon \,\aiprd{\phich^{m+\frac12} - \phich^{m-\frac12}}{\psi} - \varepsilon^{-1} \iprd{\phitil^{m+\frac12} - \phitil^{m-\frac12}}{\psi}
	\nonumber
	\\
&+ \varepsilon^{-1} \iprd{\chi\left(\phih^{m+1}, \phih^m\right) - \chi\left(\phih^m, \phih^{m-1}\right)}{\psi} 
	\nonumber
	\\
=&\, \varepsilon \,\aiprd{\frac34 \tau \dtau \phih^{m+\frac12} + \frac14 \tau \dtau \phih^{m-\frac32}}{\psi} - \varepsilon^{-1} \iprd{\frac32 \tau \dtau \phih^{m-\frac12} - \frac12 \tau \dtau \phih^{m-\frac32}}{\psi}
	\nonumber
	\\
&+ \frac{1}{4 \varepsilon} \iprd{\omega^m_h \left(\phih^{m+1} - \phih^{m-1}\right)}{\psi},
	\label{eq:scheme-b-staggered}
	\end{align}
for all $\psi \in S_h$, where  $\omega^m_h := \omega\left(\phih^{m+1}, \phih^m, \phih^{m-1}\right)$ and
	\begin{align*}
\omega\left(a,b,c\right) := a^2+b^2+c^2 +ab +bc + ac.
	\end{align*}
Additionally, we take a weighted average of the $m+\frac12$ and $m-\frac32$ time steps with the weights $\frac34$ and $\frac14$, respectively, of \eqref{eq:scheme-a} to obtain,
	\begin{equation}
\iprd{\frac34 \dtau \phih^{m+\frac12} + \frac14 \dtau \phih^{m-\frac32}}{\nu} = - \varepsilon \, \aiprd{\frac34 \muh^{m+\frac12} + \frac14 \muh^{m-\frac32}}{\nu}, 
	\label{eq:scheme-a-staggered}
	\end{equation}
for all $\nu \in S_h$, which is well-defined for all $2 \le m \le M-1$. Taking $\psi = \frac34 \muh^{m+\frac12} + \frac14 \muh^{m-\frac32}$ in \eqref{eq:scheme-b-staggered}, $\nu = \tau \left(\frac34 \dtau \phih^{m+\frac12} + \frac14 \dtau \phih^{m-\frac32} \right)$ in \eqref{eq:scheme-a-staggered}, and adding the results yields 
	\begin{align*}
\iprd{\muh^{m+\frac12} - \muh^{m-\frac12}}{\frac34 \muh^{m+\frac12} + \, \frac14 \muh^{m-\frac32}} +& \, \tau \norm{\frac34 \dtau \phih^{m+\frac12} + \frac14 \dtau \phih^{m-\frac32}}{L^2}^2 
	\\
=& \, -\frac{\tau}{\varepsilon} \iprd{\frac32 \dtau \phih^{m-\frac12} - \frac12 \dtau \phih^{m-\frac32}}{\frac34 \muh^{m+\frac12} + \frac14 \muh^{m-\frac32}}
	\\
&+ \frac{1}{4\varepsilon} \iprd{\omega^m_h \left(\phih^{m+1} - \phih^{m-1}\right) }{\frac34 \muh^{m+\frac12} + \frac14 \muh^{m-\frac32}}
	\\
=& \, -\frac{\tau}{\varepsilon} \iprd{\frac32 \dtau \phih^{m-\frac12} - \frac12 \dtau \phih^{m-\frac32}}{\frac34 \muh^{m+\frac12} + \frac14 \muh^{m-\frac32}}
	\\
&+ \frac{\tau}{4 \varepsilon} \iprd{\omega^m_h \dtau \phih^{m+\frac12} }{\frac34 \muh^{m+\frac12} + \frac14 \muh^{m-\frac32}}
	\\
&+ \frac{\tau}{4 \varepsilon} \iprd{\omega^m_h \dtau \phih^{m-\frac12}}{\frac34 \muh^{m+\frac12} + \frac14 \muh^{m-\frac32}}
	\\
\le& \, \frac{3 \tau}{8 \varepsilon} \norm{\dtau \phih^{m-\frac12}}{L^2} \norm{3\muh^{m+\frac12} + \muh^{m-\frac32}}{L^2}
	\\
&+ \frac{\tau}{8 \varepsilon} \norm{\dtau \phih^{m-\frac32}}{L^2} \norm{3 \muh^{m+\frac12} + \muh^{m-\frac32}}{L^2}
	\\
&+ \frac{\tau}{16 \varepsilon} \norm{\omega^m_h}{L^3} \norm{\dtau \phih^{m+\frac12}}{L^2} \norm{3\muh^{m+\frac12} + \muh^{m-\frac32}}{L^6}
	\\
&+ \frac{\tau}{16\varepsilon} \norm{\omega^m_h}{L^3} \norm{\dtau \phih^{m-\frac12}}{L^2} \norm{3\muh^{m+\frac12} + \muh^{m-\frac32}}{L^6}.
	\end{align*}
Hence,
\begin{align*}
\iprd{\muh^{m+\frac12} - \muh^{m-\frac12}}{\frac34 \muh^{m+\frac12} + \frac14 \muh^{m-\frac32}} +& \, \tau \norm{\frac34 \dtau \phih^{m+\frac12} + \frac14 \dtau \phih^{m-\frac32}}{L^2}^2 
	\\
\le& \, \frac{\tau}{8} \norm{\dtau \phih^{m+\frac12}}{L^2}^2 + \frac{\tau}{32} \norm{\dtau \phih^{m-\frac12}}{L^2}^2 + \frac{\tau}{32} \norm{\dtau \phih^{m-\frac32}}{L^2}^2 
	\\
&+ C \tau \norm{\muh^{m+\frac12}}{H^1}^2 + C \tau \norm{\muh^{m-\frac32}}{H^1}^2 ,
	\end{align*}
where we use the $H^1(\Omega) \hookrightarrow L^6(\Omega)$ embedding to achieve following bound,  
	\begin{align*}
\norm{\omega^m_h}{L^3} =& \, \norm{\left(\phih^{m+1}\right)^2 + \left(\phih^{m}\right)^2 + \left(\phih^{m-1}\right)^2 + \phih^{m+1} \phih^m + \phih^{m+1} \phih^{m-1} + \phih^m \phih^{m-1}}{L^3}
	\\
\le& \, C \norm{\phih^{m+1}}{L^6}^2  + C \norm{\phih^{m}}{L^6}^2 + C \norm{\phih^{m-1}}{L^6}^2 \le C.
	\end{align*}
Applying $\sum_{m=2}^{\ell}$ and using the following properties
	\begin{align*}
\iprd{\muh^{m+\frac12} - \muh^{m-\frac12}}{\frac34 \muh^{m+\frac12} + \frac14 \muh^{m-\frac32}} =& \, \frac12 \iprd{\muh^{m+\frac12} - \muh^{m-\frac12}}{\muh^{m+\frac12} + \muh^{m-\frac12}}
	\\
&+ \frac14 \iprd{\muh^{m+\frac12} - \muh^{m-\frac12}}{\muh^{m+\frac12} - 2\muh^{m-\frac12} + \muh^{m-\frac32}}
	\\
=& \, \frac12 \norm{\muh^{m+\frac12}}{L^2}^2 - \frac12 \norm{\muh^{m-\frac12}}{L^2}^2 + \frac18 \norm{\muh^{m+\frac12} - \muh^{m-\frac12}}{L^2}^2 
	\\
&- \frac18 \norm{\muh^{m-\frac12} - \muh^{m-\frac32}}{L^2}^2 + \frac18 \norm{\muh^{m+\frac12} - 2 \muh^{m-\frac12} + \muh^{m-\frac32}}{L^2}^2,
	\\
\norm{\frac34 \dtau \phih^{m+\frac12} + \frac14 \dtau \phih^{m-\frac32}}{L^2}^2 =& \, \frac9{16} \norm{\dtau \phih^{m+\frac12}}{L^2}^2 + \frac38 \iprd{\dtau \phih^{m+\frac12}}{\dtau \phih^{m-\frac32}} + \frac1{16} \norm{\dtau \phih^{m-\frac32}}{L^2}^2
	\\
\ge& \, \frac9{16} \norm{\dtau \phih^{m+\frac12}}{L^2}^2 - \frac38 \norm{\dtau \phih^{m+\frac12}}{L^2} \norm{\dtau \phih^{m-\frac32}}{L^2} 
	\\
&+ \frac1{16} \norm{\dtau \phih^{m-\frac32}}{L^2}^2
	\\
\ge& \, \frac38 \norm{\dtau \phih^{m+\frac12}}{L^2}^2 - \frac18 \norm{\dtau \phih^{m-\frac32}}{L^2}^2,
	\end{align*}
we conclude
	\begin{align*}
\frac12 \norm{\muh^{\ell+\frac12}}{L^2}^2 + \frac{\tau}{16} \sum_{m=2}^{\ell} \norm{\dtau \phih^{m+\frac12}}{L^2}^2 \le& \, \frac18 \norm{\muh^{\frac32} - \muh^{\frac12}}{L^2}^2 + \frac{3 \tau}{16} \norm{\dtau \phih^{\frac32}}{L^2}^2 + \frac12 \norm{\muh^{\frac32}}{L^2}^2
	\\
&+ \frac{5 \tau}{32} \norm{\dtau \phih^{\frac12}}{L^2}^2  + C \tau \sum_{m=0}^{\ell}\norm{\muh^{m+\frac12}}{H^1}^2  \le C (T+1),
	\end{align*}
for any $2 \le \ell \le M-1$, where we have used Parts 1 and 2 and estimates  \eqref{eq:sum-mu-discrete} and \eqref{eq:sum-3D-good-mu}. The proof is completed by combining all three parts.
	\end{proof}
	
	\begin{lem}
	\label{lem-a-priori-stability-phicheck}
Let $(\phih^{m+1}, \muh^{m+\frac12}) \in S_h\times S_h$ be the unique solution of  \eqref{eq:scheme-a} -- \eqref{eq:scheme-b}, and $(\phih^1, \muh^{\frac12}) \in S_h\times S_h$, the unique solution of \eqref{eq:scheme-a-initial} -- \eqref{eq:scheme-b-initial}. Then the following estimates hold for any $h,\, \tau >0$:
	\begin{align}
\norm{\Delta_h \phih^{\frac12}}{L^2}^2  + \norm{\phih^{\frac12}}{L^\infty}^2 &\le C,
	\label{eq:Linf-phih-ave}
	\\
\max_{1\le m\le M-1} \bigg[ \norm{\Delta_h \phich^{m+\frac12}}{L^2}^2  + \norm{\phich^{m+\frac12}}{L^\infty}^{\frac{4(6-d)}{d}} \bigg] &\le C(T+1),
	\label{eq:Linf-phich} 
	\end{align}
for some constant $C>0$ that is independent of $h$, $\tau$, and $T$.
	\end{lem}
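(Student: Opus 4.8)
The plan is to promote the summed-in-time estimates of Lemma~\ref{lem-improved-a-priori-stabilities} to pointwise-in-time (maximum-in-$m$) estimates, using the new $L^\infty(0,T;L^2)$ control of the chemical potential supplied by Lemma~\ref{lem-a-priori-stability-dtau-mu}. The mechanism is essentially the one that produced \eqref{eq:discrete-Laplacian-phi-middle}, namely testing \eqref{eq:scheme-b} (respectively \eqref{eq:scheme-b-initial}) against the discrete Laplacian of the elliptic term; the only change is that the term containing the chemical potential must be estimated through its $L^2$ norm rather than through $\norm{\nabla\muh^{m+\frac12}}{L^2}$, which is controlled only after summation in time.

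\emph{Initialization step.} I would set $\psi = \Delta_h\phih^{\frac12}$ in \eqref{eq:scheme-b-initial}. Using $\aiprd{\phih^{\frac12}}{\Delta_h\phih^{\frac12}} = -\norm{\Delta_h\phih^{\frac12}}{L^2}^2$ and $\aiprd{\muh^0}{\Delta_h\phih^{\frac12}} = -\iprd{\Delta_h\muh^0}{\Delta_h\phih^{\frac12}}$ (both from \eqref{eq:discrete-Laplacian}, the latter legitimate since $\muh^0\in S_h$), one obtains
\begin{align*}
\varepsilon\norm{\Delta_h\phih^{\frac12}}{L^2}^2 =& \, \varepsilon^{-1}\iprd{\chi\left(\phih^1,\phih^0\right)}{\Delta_h\phih^{\frac12}} - \varepsilon^{-1}\iprd{\phih^0}{\Delta_h\phih^{\frac12}} \\
& - \iprd{\muh^{\frac12}}{\Delta_h\phih^{\frac12}} - \frac{\tau}{2}\iprd{\Delta_h\muh^0}{\Delta_h\phih^{\frac12}}.
\end{align*}
Each term on the right is bounded by Cauchy--Schwarz and Young's inequality, absorbing a small multiple of $\norm{\Delta_h\phih^{\frac12}}{L^2}^2$ into the left side; the leftover constants are controlled by $\norm{\muh^{\frac12}}{L^2}^2\le C$ (Part~1 of Lemma~\ref{lem-a-priori-stability-dtau-mu}, i.e.\ \eqref{eq:mu-initial-control}), by $\norm{\chi\left(\phih^1,\phih^0\right)}{L^2}^2\le C$ \eqref{eq:chi-L2-stability}, by $\norm{\phih^0}{L^2}^2\le C$ \eqref{eq:Linf-H1-phi-discrete}, and by $\tau^2\norm{\Delta_h\muh^0}{L^2}^2\le C$ from the initial stability \eqref{eq:initial-stability}. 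This gives $\norm{\Delta_h\phih^{\frac12}}{L^2}^2\le C$, and then $\norm{\phih^{\frac12}}{L^\infty}^2\le C$ follows from the discrete Gagliardo--Nirenberg inequality \eqref{eq:infinity-bound} (with $\psi_h=\phih^{\frac12}$), $H^1(\Omega)\hookrightarrow L^6(\Omega)$, and \eqref{eq:Linf-H1-phi-discrete}, establishing \eqref{eq:Linf-phih-ave}.

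\emph{General step.} For $1\le m\le M-1$, I would set $\psi = \Delta_h\phich^{m+\frac12}$ in \eqref{eq:scheme-b} and repeat the computation leading to \eqref{eq:discrete-Laplacian-phi-middle}, but estimate the chemical-potential contribution directly as $-\iprd{\muh^{m+\frac12}}{\Delta_h\phich^{m+\frac12}}\le C\norm{\muh^{m+\frac12}}{L^2}^2 + \frac{\varepsilon}{4}\norm{\Delta_h\phich^{m+\frac12}}{L^2}^2$, rather than integrating by parts. The remaining terms are handled with $\norm{\phitil^{m+\frac12}}{L^2}^2\le C$ (triangle inequality and \eqref{eq:Linf-H1-phi-discrete}) and $\norm{\chi\left(\phih^{m+1},\phih^m\right)}{L^2}^2\le C$ \eqref{eq:chi-L2-stability}. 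After absorption this yields $\varepsilon\norm{\Delta_h\phich^{m+\frac12}}{L^2}^2\le C\norm{\muh^{m+\frac12}}{L^2}^2 + C$, and invoking \eqref{eq:Linf-mu} gives $\norm{\Delta_h\phich^{m+\frac12}}{L^2}^2\le C(T+1)$ uniformly in $m$. Finally, raising \eqref{eq:infinity-bound} (with $\psi_h=\phich^{m+\frac12}$) to the power $\frac{4(6-d)}{d}$, noting that the exponent on $\norm{\Delta_h\phich^{m+\frac12}}{L^2}$ becomes exactly $2$ and that the remaining $L^6$ factors are bounded via $\norm{\phich^{m+\frac12}}{H^1}$ and \eqref{eq:Linf-H1-phi-discrete}, produces $\norm{\phich^{m+\frac12}}{L^\infty}^{\frac{4(6-d)}{d}}\le C(T+1)$; taking the maximum over $m$ completes \eqref{eq:Linf-phich}.

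\emph{Main obstacle.} There is no genuine obstacle here: the argument is a direct reprise of the derivation of \eqref{eq:discrete-Laplacian-phi-middle}. The single point requiring care is to \emph{not} use the integration-by-parts identity $\iprd{\muh^{m+\frac12}}{\Delta_h\phich^{m+\frac12}} = -\aiprd{\muh^{m+\frac12}}{\phich^{m+\frac12}}$ from that earlier proof, since the resulting $\norm{\nabla\muh^{m+\frac12}}{L^2}$ is only summable-in-time, not bounded uniformly in $m$; instead one keeps $\muh^{m+\frac12}$ in $L^2$ and appeals to the pointwise-in-time bound \eqref{eq:Linf-mu}. For the initialization step the extra term $\frac{\tau}{2}\aiprd{\muh^0}{\Delta_h\phih^{\frac12}}$ must be controlled through $\tau^2\norm{\Delta_h\muh^0}{L^2}^2\le C$ rather than $\tau\norm{\Delta_h\muh^0}{L^2}^2$, which is precisely why the $\tau^2$-weighted term is included in the initial stability assumption \eqref{eq:initial-stability}.
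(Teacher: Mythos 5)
Your proposal is correct and follows essentially the same route as the paper: test \eqref{eq:scheme-b-initial} and \eqref{eq:scheme-b} with the discrete Laplacian of the elliptic term, keep the chemical potential in $L^2$ (rather than integrating by parts as in the derivation of \eqref{eq:discrete-Laplacian-phi-middle}) so that \eqref{eq:mu-initial-control} and \eqref{eq:Linf-mu} apply pointwise in $m$, and conclude via the discrete Gagliardo--Nirenberg inequality \eqref{eq:infinity-bound}. The points you flag as requiring care (the $\tau^2\norm{\Delta_h\muh^0}{L^2}^2$ control of the initialization term and the exponent bookkeeping in \eqref{eq:infinity-bound}) are handled identically in the paper.
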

	
	\begin{proof}
To prove (\ref{eq:Linf-phih-ave}.1), set $\psi = \Delta_h \phih^{\frac12}$ in \eqref{eq:scheme-b-initial} and use the definition of the discrete Laplacian \eqref{eq:discrete-Laplacian} to obtain
	\begin{align*}
\varepsilon \norm{\Delta_h \phih^{\frac12}}{L^2}^2 =& \, -\varepsilon \, \aiprd{\phih^{\frac12}}{\Delta_h \phih^{\frac12}}
	\\
=& \, \varepsilon^{-1} \iprd{\chi\left(\phih^1,\phih^0\right)}{\Delta_h \phih^{\frac12}} - \varepsilon^{-1} \iprd{\phih^0}{\Delta_h \phih^{\frac12}} - \iprd{\muh^{\frac12}}{\Delta_h \phih^{\frac12}} + \frac{\tau}{2} \aiprd{\muh^0}{\Delta_h \phih^{\frac12}}
	\\
\le& \, \frac{\varepsilon}{2} \norm{\Delta_h \phih^{\frac12}}{L^2}^2 + C \norm{\chi\left(\phih^1,\phih^0\right)}{L^2}^2 + C \norm{\phih^0}{L^2}^2 + C \norm{\muh^{\frac12}}{L^2}^2 + C \tau^2 \norm{\Delta_h \muh^0}{L^2}^2
	\\
\le& \, \frac{\varepsilon}{2} \norm{\Delta_h \phih^{\frac12}}{L^2}^2 + C .
	\end{align*}
The result now follows. Estimate (\ref{eq:Linf-phih-ave}.2) follows from \eqref{eq:infinity-bound}, the embedding $H^1(\Omega) \hookrightarrow L^6(\Omega)$, \eqref{eq:Linf-H1-phi-discrete}, and (\ref{eq:Linf-phih-ave}.1). 

Setting $\psi = \Delta_h \phich^{m+\frac12}$ in \eqref{eq:scheme-b} and using the definition of the discrete Laplacian \eqref{eq:discrete-Laplacian}, we get
	\begin{align*}
\varepsilon \norm{\Delta_h \phich^{m+\frac12}}{L^2}^2 =& \, -\varepsilon \, \aiprd{\phich^{m+\frac12}}{\Delta_h \phich^{m+\frac12}}
	\\
=& \, - \iprd{\muh^{m+\frac12}}{\Delta_h \phich^{m+\frac12}} - \varepsilon^{-1} \iprd{\phitil^{m+\frac12}}{\Delta_h \phich^{m+\frac12}}
	\\
&+ \varepsilon^{-1} \iprd{\chi\left(\phih^{m+1},\phih^m\right)}{\Delta_h \phich^{m+\frac12}}
	\\
\le& \, C \norm{\muh^{m+\frac12}}{L^2}^2 + \frac{\varepsilon}{2} \norm{\Delta_h \phich^{m+\frac12}}{L^2}^2 + C \norm{\phitil^{m+\frac12}}{L^2}^2
	\\
&  + C \norm{\chi\left(\phih^{m+1},\phih^m\right)}{L^2}^2 
	\\
\le& \, C + C \norm{\muh^{m+\frac12}}{L^2}^2 + \frac{\varepsilon}{2} \norm{\Delta_h \phich^{m+\frac12}}{L^2}^2,
	\end{align*}
where we have used the triangle inequality and \eqref{eq:chi-L2-stability}. Hence, $\norm{\Delta_h \phich^{m+\frac12}}{L^2}^2 \le C + C\norm{\muh^{m+\frac12}}{L^2}^2$, for $1\le m \le M-1$, and estimate (\ref{eq:Linf-phich}.1) follows from \eqref{eq:Linf-mu}. Estimate (\ref{eq:Linf-phich}.2) follows from \eqref{eq:infinity-bound}, the embedding $H^1(\Omega) \hookrightarrow L^6(\Omega)$, \eqref{eq:Linf-H1-phi-discrete}, and (\ref{eq:Linf-phich}.1). 
	\end{proof}
	
	\begin{lem}
	\label{lem-a-priori-stability-ultimate}
Let $(\phih^{m+1}, \muh^{m+\frac12}) \in S_h\times S_h$ be the unique solution of  \eqref{eq:scheme-a} -- \eqref{eq:scheme-b}, and $(\phih^1, \muh^{\frac12}) \in S_h\times S_h$, the unique solution of \eqref{eq:scheme-a-initial} -- \eqref{eq:scheme-b-initial}. The following estimates hold for any $h,\, \tau >0$:
	\begin{align}
\max_{0\le m\le M} \bigg[ \norm{\Delta_h \phih^m}{L^2}^2  + \norm{\phih^m}{L^\infty}^{\frac{4(6-d)}{d}} \bigg] &\le C(T+1),
	\label{eq:Linf-phi} 
	\end{align}
for some constant $C>0$ that is independent of $h$, $\tau$, and $T$.
	\end{lem}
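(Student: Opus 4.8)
The plan is to exploit the linearity of the discrete Laplacian $\Delta_h$ together with the estimates already obtained in Lemma~\ref{lem-a-priori-stability-phicheck}. Since $\phich^{m+\frac12} = \frac34\phih^{m+1} + \frac14\phih^{m-1}$ and $\Delta_h$ is linear, we have the recursion $\Delta_h\phih^{m+1} = \frac43\Delta_h\phich^{m+\frac12} - \frac13\Delta_h\phih^{m-1}$ for all $1\le m\le M-1$. Lemma~\ref{lem-a-priori-stability-phicheck} already gives $\max_{1\le m\le M-1}\norm{\Delta_h\phich^{m+\frac12}}{L^2}^2 \le C(T+1)$, so it remains only to bound the two ``seed'' quantities $\norm{\Delta_h\phih^0}{L^2}^2$ and $\norm{\Delta_h\phih^1}{L^2}^2$ and then propagate the bound through the recursion.

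For the seeds: $\norm{\Delta_h\phih^0}{L^2}^2 \le C$ is part of the initial-data stability assumption \eqref{eq:initial-stability}; and since $\phih^{\frac12} = \frac12\phih^1 + \frac12\phih^0$ we have $\Delta_h\phih^1 = 2\Delta_h\phih^{\frac12} - \Delta_h\phih^0$, so the triangle inequality together with (\ref{eq:Linf-phih-ave}.1) and \eqref{eq:initial-stability} gives $\norm{\Delta_h\phih^1}{L^2}^2 \le C$. For the propagation, set $A := \max\{\norm{\Delta_h\phih^0}{L^2},\norm{\Delta_h\phih^1}{L^2}\} \le C$ and $B := \max_{1\le m\le M-1}\norm{\Delta_h\phich^{m+\frac12}}{L^2} \le C\sqrt{T+1}$. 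The recursion links indices of the same parity, but both seeds are bounded, so a single induction works: if $\norm{\Delta_h\phih^{m-1}}{L^2} \le \max\{A,2B\}$ then $\norm{\Delta_h\phih^{m+1}}{L^2} \le \frac43 B + \frac13\max\{A,2B\} \le \frac23\max\{A,2B\} + \frac13\max\{A,2B\} = \max\{A,2B\}$. Hence $\norm{\Delta_h\phih^m}{L^2}^2 \le \max\{A,2B\}^2 \le C(T+1)$ for all $0\le m\le M$, with $C$ independent of $h$, $\tau$, and $T$.

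For the $L^\infty$ estimate, apply the discrete Gagliardo--Nirenberg inequality \eqref{eq:infinity-bound} with $\psi_h^m = \phih^m$ and raise both sides to the power $\frac{4(6-d)}{d}$, which equals $8$ for $d=2$ and $4$ for $d=3$, hence is $\ge 1$, so that $(a+b)^p \le C_p(a^p+b^p)$ applies. The exponents on $\norm{\Delta_h\phih^m}{L^2}$ and $\norm{\phih^m}{L^6}$ collapse to $2$ and $\frac{6(4-d)}{d}$ respectively, and since $\norm{\phih^m}{L^6} \le C\norm{\phih^m}{H^1} \le C$ by $H^1(\Omega)\hookrightarrow L^6(\Omega)$ and \eqref{eq:Linf-H1-phi-discrete}, while $\norm{\Delta_h\phih^m}{L^2}^2 \le C(T+1)$ from the previous step, we conclude $\norm{\phih^m}{L^\infty}^{\frac{4(6-d)}{d}} \le C(T+1)$, completing the proof.

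This is a clean bootstrapping argument rather than one with a genuine obstacle; the only points requiring care are that the recursion coefficient $\frac13$ is a strict contraction, so the induction constant does \emph{not} grow with $M$ (equivalently, with $T$, beyond the explicit $(T+1)$ factor carried by $B$), and that the two-step nature of the recursion (indices $m+1$ and $m-1$) forces us to control \emph{two} bounded seeds, which is why the initial-step bound $\norm{\Delta_h\phih^1}{L^2}^2\le C$ via $\phih^{\frac12}$ is needed in addition to \eqref{eq:initial-stability}.
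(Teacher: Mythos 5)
Your proposal is correct and follows essentially the same route as the paper: bound the seeds $\norm{\Delta_h\phih^0}{L^2}$ and $\norm{\Delta_h\phih^1}{L^2}$ (the latter via $\Delta_h\phih^1 = 2\Delta_h\phih^{\frac12}-\Delta_h\phih^0$), propagate through the two-step relation between $\Delta_h\phich^{m+\frac12}$ and $\Delta_h\phih^{m\pm1}$ by induction on each parity class, and then convert to the $L^\infty$ bound via \eqref{eq:infinity-bound}. The only cosmetic difference is that the paper runs the induction on squared norms using $\norm{\Delta_h\phich^{m+\frac12}}{L^2}^2 \ge \frac38\norm{\Delta_h\phih^{m+1}}{L^2}^2 - \frac18\norm{\Delta_h\phih^{m-1}}{L^2}^2$ and sums an explicit geometric series, whereas you keep an invariant $\max\{A,2B\}$ at the level of the norms themselves; both yield the same $C(T+1)$ bound.
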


	\begin{proof}
We begin by proving the stability for the first time step. A simple application of the triangle inequality gives (\ref{eq:Linf-phi}.1) for $m=1$ as follows,
	\begin{align*}
\norm{\Delta_h \phih^1}{L^2} =  \norm{\Delta_h \phih^1 + \Delta_h \phih^0 - \Delta_h \phih^0}{L^2} &\le \norm{\Delta_h \phih^1 + \Delta_h \phih^0}{L^2} + \norm{\Delta_h \phih^0}{L^2}
	\\
&\le 2 \norm{\Delta_h \phih^{\frac12}}{L^2} + \norm{\Delta_h \phih^0}{L^2} \le C,
	\end{align*}
where we have used the stability of the initial data, inequality \eqref{eq:initial-stability}, and (\ref{eq:Linf-phih-ave}.1).  Next, using \eqref{eq:infinity-bound}, $H^1(\Omega) \hookrightarrow L^6(\Omega)$, \eqref{eq:Linf-H1-phi-discrete}, and  (\ref{eq:Linf-phi}.1), we arrive at (\ref{eq:Linf-phi}.2) for $m=1$. For $2 \le m \le M-1$, by definition,
	\begin{align*}
\norm{\Delta_h \phich^{m+\frac12}}{L^2}^2 &= \norm{\Delta_h \left(\frac34 \phih^{m+1} + \frac14 \phih^{m-1}\right)}{L^2}^2
	\\
&= \frac{9}{16} \norm{\Delta_h \phih^{m+1}}{L^2}^2 + \frac38 \iprd{\Delta_h \phih^{m+1}}{\Delta_h \phih^{m-1}} + \frac1{16} \norm{\Delta_h \phih^{m-1}}{L^2}^2
	\\
&\ge \frac{9}{16} \norm{\Delta_h \phih^{m+1}}{L^2}^2 - \frac{3}{16} \norm{\Delta_h \phih^{m+1}}{L^2}^2 - \frac{3}{16} \norm{\Delta_h \phih^{m-1}}{L^2}^2 + \frac{1}{16} \norm{\Delta_h \phih^{m-1}}{L^2}^2
	\\
&= \frac{3}{8} \norm{\Delta_h \phih^{m+1}}{L^2}^2 - \frac{1}{8} \norm{\Delta_h \phih^{m-1}}{L^2}^2.
	\end{align*}
Using induction and estimate (\ref{eq:Linf-phich}.1), we find
	\begin{align*}
\norm{\Delta_h \phih^{2m}}{L^2}^2 &\le \frac{8}{3}\left(1+\frac13+\left(\frac13\right)^2+\cdots+\left(\frac13\right)^{m-1}\right) C(T+1) + \left(\frac13\right)^m \norm{\Delta_h \phih^0}{L^2}^2
	\\
&\le \frac83 \cdot \frac32 C(T+1) + \left(\frac13\right)^m \cdot C \le C(T+1),
	\end{align*}
and 
	\begin{align*}
\norm{\Delta_h \phih^{2m+1}}{L^2}^2 &\le \frac83 \left(1 + \frac13 + \left(\frac13\right)^2 + \cdots + \left(\frac13\right)^{m-1}\right) C(T+1) + \left(\frac13\right)^m \norm{\Delta_h \phih^1}{L^2}^2
	\\
&\le \frac83 \cdot \frac32 C(T+1) + \left(\frac13\right)^m\cdot C \le C(T+1),
	\end{align*}
and estimate (\ref{eq:Linf-phi}.1) follows. Estimate (\ref{eq:Linf-phi}.2) follows from \eqref{eq:infinity-bound}, (\ref{eq:Linf-phi}.1), and the embedding $H^1(\Omega) \hookrightarrow L^6(\Omega)$.
	\end{proof}
	
	\section{Error Estimates for the Fully Discrete Convex Splitting Scheme}
	\label{sec:error estimates}

In this section, we provide a rigorous convergence analysis for our scheme in the appropriate energy norms. We shall assume that weak solutions have the additional regularities
	\begin{align}
	\nonumber
&\phi \in L^{\infty}\left(0,T;W^{1,6}(\Omega)\right) \cap H^1\left(0,T;H^{q+1}(\Omega)\right) \cap H^2(0,T;H^{3}(\Omega)) \cap H^3(0,T;L^2(\Omega)),
	\\
&\phi^2 \in H^2\left(0,T; H^1(\Omega)\right) ,
	\label{eq:higher-regularities}
	\\
&\mu \in L^2\left(0,T;H^{q+1}(\Omega)\right),
	\nonumber
	\end{align}
where $q\ge 1$.  The norm bounds associated to the assumed regularities above are not necessarily global-in-time and therefore can involve constants that depend upon the final time $T$.  We also assume that the initial data are sufficiently regular so that the stability \eqref{eq:initial-stability} holds.  Weak solutions $(\phi, \mu)$ to \eqref{eq:weak-mch-a} - \eqref{eq:weak-mch-b} with the higher regularities \eqref{eq:higher-regularities} solve the following variational problem: for all $t\in [0,T]$, 
	\begin{subequations}
	\begin{align}
\iprd{\partial_t \phi}{\nu} + \varepsilon \, \aiprd{\mu}{\nu} &= 0 &\forall \quad \nu \in H^1(\Omega),
	\\
\iprd{\mu}{\psi} - \varepsilon \, \aiprd{\phi}{\psi} - \varepsilon^{-1} \iprd{\phi^3 - \phi}{\psi} &=0 &\forall \quad \psi \in H^1(\Omega).
	\end{align}
	\end{subequations}
We define the following: for any \emph{real} number $m\in [0,M]$,
	\begin{displaymath}
t_{m} := m \, \tau, \quad \phi^{m} := \phi(t_{m}), \quad  \eA^{\phi,m} := \phi^{m} - R_h \phi^{m}, \quad \eA^{\mu,m} := \mu^{m} - R_h \mu^{m};
	\end{displaymath}
and for any integer $0\le m\le M-1$,
	\begin{eqnarray*}
\dtau \phi^{m+\frac12} := \frac{\phi^{m+1} - \phi^{m}}{\tau}, \quad \sigma^{m+\frac12}_1 := \dtau R_h \phi^{m+\frac12} - \dtau \phi^{m+\frac12},  
	\\
\sigma^{m+\frac12}_2 := \dtau \phi^{m+\frac12} - \partial_t \phi^{m+\frac12}, \quad \sigma^{m+\frac12}_3 := \frac12  \phi^{m+1} + \frac12  \phi^{m} -  \phi^{m+\frac12}
	\\
\sigma_4^{m+\hf} :=  \chi\left(\phi^{m+1},\phi^m \right) - \left(\phi^{m+\hf}\right)^3.
	\end{eqnarray*}	  
Then the PDE solution, evaluated at the half-integer time steps $t_{m+\hf}$, satisfies
	\begin{subequations}
	\begin{align}
\iprd{\dtau R_h \phi^{m+\frac12}}{\nu} + \varepsilon \, \aiprd{R_h \mu^{m+\frac12}}{\nu} =& \, \iprd{\sigma^{m+\frac12}_1 + \sigma^{m+\frac12}_2}{\nu} ,
	\label{eq:weak-error-a}
	\\
\varepsilon \, \aiprd{\frac12 R_h \phi^{m+1} + \frac12 R_h \phi^{m}}{\psi} - \iprd{R_h \mu^{m+\frac12}}{\psi} =& \, \iprd{\eA^{\mu,m+\frac12}}{\psi} - \frac{1}{\varepsilon} \iprd{\chi\left(\phi^{m+1},\phi^m\right)}{\psi} 
	\nonumber
	\\
&+  \frac{1}{\varepsilon} \iprd{\phi^{m+\frac12}}{\psi} + \varepsilon \, \aiprd{\sigma^{m+\frac12}_3}{\psi}
	\nonumber
	\\
& +\frac{1}{\varepsilon}\iprd{\sigma_4^{m+\hf}}{\psi} 
	\label{eq:weak-error-b}
	\end{align}
	\end{subequations}
for all $\nu, \psi \in S_h$.  Restating the fully discrete splitting scheme, Eqs.~\eqref{eq:scheme-a} -- \eqref{eq:scheme-b} and \eqref{eq:scheme-a-initial} -- \eqref{eq:scheme-b-initial}, we have, for all $\nu, \psi \in S_h$,
	\begin{subequations}
	\begin{align}
\iprd{\dtau \phih^{\frac12}}{\nu} + \varepsilon \,\aiprd{\muh^{\frac12}}{\nu} &= \, 0, 
	\label{eq:scheme-error-a-initial}
	\\
\varepsilon \,\aiprd{\phih^{\frac12}}{\psi} - \iprd{\muh^{\frac12}}{\psi} &= \, - \frac{1}{\varepsilon} \iprd{\chi\left(\phih^1,\phih^0\right)}{\psi} + \frac{1}{\varepsilon} \iprd{\phih^0 +\frac{\tau}{2} \partial_t \phi^0}{\psi} ;
	\label{eq:scheme-error-b-initial}
	\end{align}
	\end{subequations}
and, for $1 \le m \le M-1$, and all $\nu, \psi \in S_h$, 
	\begin{subequations}
	\begin{align}
\iprd{\dtau \phih^{m+\frac12}}{\nu} + \varepsilon \, \aiprd{\muh^{m+\frac12}}{\nu} =& \, 0 ,
	\label{eq:scheme-error-a}
	\\
	\nonumber
\varepsilon \, \aiprd{\phih^{m+\frac12}}{\psi} + \frac{\varepsilon}{4} \, \aiprd{\phih^{m+1} -2 \phih^m + \phih^{m-1}}{\psi} - \iprd{\muh^{m+\frac12}}{\psi} = &- \frac{1}{\varepsilon} \iprd{\chi\left(\phih^{m+1}, \phih^m\right)}{\psi}
	\nonumber
	\\
& + \frac{1}{\varepsilon} \iprd{\tilde{\phih}^{m+\hf}}{\psi}.
	\label{eq:scheme-error-b}
	\end{align}
	\end{subequations}
Now let us define the following additional error terms: for any integers $0\le m\le M$, 
	\begin{equation}
\eh^{\phi,m} := R_h \phi^{m} - \phih^{m}, \quad \e^{\phi,m} := \phi^{m} - \phih^{m}, 
	\end{equation}
and, for any integers $0\le m\le M-1$
	\begin{equation}
 \eh^{\mu,m+\hf} := R_h \mu^{m+\hf} - \muh^{m+\hf}, \quad  \e^{\mu,m+\hf} :=  \mu^{m+\hf} - \muh^{m+\hf}.
	\end{equation}
Setting $m=0$ in \eqref{eq:weak-error-a} -- \eqref{eq:weak-error-b} and subtracting \eqref{eq:scheme-error-a-initial} -- \eqref{eq:scheme-error-b-initial}, we have
	\begin{subequations}
	\begin{align}
\iprd{\dtau \eh^{\phi,\frac12}}{\nu} + \varepsilon \, \aiprd{\eh^{\mu,\frac12}}{\nu} =& \iprd{\sigma^{\frac12}_1 + \sigma^{\frac12}_2}{\nu} ,
	\label{eq:error-a-initial}
	\\
\frac{\varepsilon}{2} \, \aiprd{\eh^{\phi,1} + \eh^{\phi,0}}{\psi} - \iprd{\eh^{\mu,\frac12}}{\psi} =& \, \iprd{\eA^{\mu,\frac12}}{\psi} - \frac{1}{\varepsilon}\iprd{ \chi\left(\phi^1,\phi^0\right)- \chi\left(\phih^1,\phih^0\right)}{\psi} 
	\nonumber
	\\
& + \frac{1}{\varepsilon} \iprd{\phi^{\frac12} - \phi_h^0 - \frac{\tau}{2}\partial_t \phi^0}{\psi} + \varepsilon \, \aiprd{\sigma_3^{\hf}}{\psi}
	\nonumber
	\\
& +\frac{1}{\varepsilon}\iprd{\sigma_4^{\hf}}{\psi} .
	\label{eq:error-b-initial}
	\end{align}
	\end{subequations}
Similarly, subtracting \eqref{eq:scheme-error-a} -- \eqref{eq:scheme-error-b} from \eqref{eq:weak-error-a} -- \eqref{eq:weak-error-b}, yields, for $1 \le m \le M-1$,
	\begin{subequations}
	\begin{align}
\iprd{\dtau \eh^{\phi,m+\frac12}}{\nu} + \varepsilon \, \aiprd{\eh^{\mu,m+\frac12}}{\nu}  &= \iprd{\sigma^{m+\frac12}_1 + \sigma^{m+\frac12}_2}{\nu} ,
	\label{eq:error-a}
	\\
& \hspace{-2.2in}\frac{\varepsilon}{2} \, \aiprd{\eh^{\phi,m+1} + \eh^{\phi,m}}{\psi} + \frac{\varepsilon\tau^2}{4} \aiprd{\ddtau \eh^{\phi,m}}{\psi} - \iprd{\eh^{\mu,m+\frac12}}{\psi} 
	\nonumber
	\\
& = \iprd{\eA^{\mu,m+\frac12}}{\psi} - \frac{1}{\varepsilon} \iprd{\chi\left(\phi^{m+1},\phi^m\right) - \chi\left(\phih^{m+1},\phih^m\right)}{\psi} 
	\nonumber
	\\
&  + \frac{1}{\varepsilon} \iprd{ \phi^{m+\frac12}-\tilde{\phih}^{m+\frac12}}{\psi}+ \varepsilon \, \aiprd{\sigma_3^{m+\hf}}{\psi}
	\nonumber
	\\
& +\frac{1}{\varepsilon}\iprd{\sigma_4^{m+\hf}}{\psi} + \frac{\varepsilon\tau^2}{4} \, \aiprd{\ddtau \phi^m}{\psi},
	\label{eq:error-b}
	\end{align}
	\end{subequations}
where $\tau^2 \ddtau \psi^m := \psi^{m+1} - 2 \psi^m + \psi^{m-1}$.

Now, define the additional error terms
	\begin{align}
\sigma_5^{m+\hf} &:= \, \chi\left(\phih^{m+1},\phih^m \right) - \chi\left(\phi^{m+1},\phi^m \right)  ,
	\\
\sigma_6^{m+\hf}&:= \, \phi^{m+\hf} - \left\{
	\begin{array}{rcl}
\phih^0 + \frac{\tau}{2}\partial_t\phi^0, & \mbox{for} & m =0
	\\
\tilde{\phih}^{m+\hf}, & \mbox{for} & 1\le m \le M-1
	\end{array}
\right. .
	\end{align}
Then, setting $\nu = \eh^{\mu,\frac12}$ in \eqref{eq:error-a-initial} and $\psi = \dtau \eh^{\phi,\frac12}$ in \eqref{eq:error-b-initial}, setting $\nu = \eh^{\mu, m+\frac12}$ in \eqref{eq:error-a} and $\psi = \dtau \eh^{\phi, m+\frac12}$ in \eqref{eq:error-b}, and adding the resulting equations, we have
	\begin{align}
& \hspace{-0.75in} \frac{\varepsilon}{2} \aiprd{\eh^{\phi,m+1} + \eh^{\phi,m}}{\dtau \eh^{\phi, m+\frac12}} + \frac{\gamma_m\varepsilon\tau^2}{4} \, \aiprd{\ddtau\eh^{\phi,m}}{\dtau \eh^{\phi, m+\frac12}} + \varepsilon \norm{\nabla \eh^{\mu,m+\frac12}}{L^2}^2 
	\nonumber
	\\
 = & \iprd{\sigma^{m+\frac12}_1 + \sigma^{m+\frac12}_2}{\eh^{\mu,m+\frac12}} + \iprd{\eA^{\mu,m+\frac12}}{\dtau \eh^{\phi,m+\frac12}} + \varepsilon \, \aiprd{\sigma_3^{m+\hf}}{\dtau \eh^{\phi, m+\frac12}} 
	\nonumber
	\\
& + \frac{1}{\varepsilon} \iprd{ \sigma_4^{m+\hf} + \sigma_5^{m+\hf} + \sigma_6^{m+\hf} }{\dtau \eh^{\phi, m+\frac12}}  + \frac{\gamma_m\varepsilon\tau^2}{4} \, \aiprd{\ddtau\phi^m }{\dtau \eh^{\phi, m+\frac12}} ,
	\label{eq:error-eq}
	\end{align}
for all $0 \le m \le M-1$, where $\gamma_m := 1-\delta_{0,m}$ and $\delta_{k,\ell}$ is the Kronecker delta function.  The terms involving $\gamma_m$ are ``turned on" only when $m \ge 1$. Expression~\eqref{eq:error-eq} is the key error equation from which we will define our error estimates.

	\begin{lem}
	\label{lem-truncation-errors}
Suppose that $(\phi, \mu)$ is a weak solution to \eqref{eq:weak-error-a} -- \eqref{eq:weak-error-b}, with the additional regularities \eqref{eq:higher-regularities}.  Then for all $t_m\in[0,T]$ and for any $h$, $\tau >0$, there exists a constant $C>0$, independent of $h$ and $\tau$ and $T$, such that
	\begin{align}
\norm{\sigma^{m+\frac12}_1}{L^2}^2 &\le  C\frac{h^{2q+2}}{\tau} \int_{t_m}^{t_{m+1}} \norm{\partial_s\phi(s)}{H^{q+1}}^2  ds , \hspace{-0.085in}   & 0\le m\le M-1,
	\label{eq:truncation-1} 
	\\
\norm{\sigma^{m+\frac12}_2}{L^2}^2 &\le \frac{\tau^3}{640} \int_{t_m}^{t_{m+1}}\norm{\partial_{sss}\phi(s)}{L^2}^2 ds,   & 0\le m\le M-1,
	\label{eq:truncation-2}
	\\
\norm{\nabla \Delta \sigma^{m+\frac12}_3}{L^2}^2 &\le \frac{\tau^3}{96} \int_{t_m}^{t_{m+1}} \norm{\nabla \Delta \partial_{ss} \phi(s)}{L^2}^2 ds,   & 0\le m\le M-1,
	\label{eq:truncation-3}
	\\
\norm{\nabla \sigma^{m+\frac12}_3}{L^2}^2 &\le \frac{\tau^3}{96} \int_{t_m}^{t_{m+1}} \norm{\nabla \partial_{ss} \phi(s)}{L^2}^2 ds,   & 0\le m\le M-1,
	\label{eq:truncation-4}
	\\
\norm{ \hf\left(\phi^{m+1}\right)^2+\hf\left(\phi^m\right)^2 - \left(\phi^{m+\hf}\right)^2}{H^1}^2 &\le \frac{\tau^3}{96} \int_{t_m}^{t_{m+1}} \norm{\partial_{ss} \phi^2(s)}{H^1}^2 ds,  & 0\le m\le M-1,
	\label{eq:truncation-4-b}
	\\
\norm{\tau^2\nabla \Delta \ddtau\phi^m}{L^2}^2 &\le \frac{\tau^3}{3} \int_{t_{m-1}}^{t_{m+1}} \norm{\nabla \Delta \partial_{ss} \phi(s)}{L^2}^2 ds,  & 1\le m\le M-1,
	\label{eq:truncation-5}
	\\
\norm{\tau^2\nabla \ddtau\phi^m}{L^2}^2 &\le \frac{\tau^3}{3} \int_{t_{m-1}}^{t_{m+1}} \norm{\nabla \partial_{ss} \phi(s)}{L^2}^2 ds,   & 1\le m\le M-1,
	\label{eq:truncation-6}
	\\
\norm{\nabla\left(\phi^{m+\hf}- \frac32 \phi^m +\frac12\phi^{m-1}\right)}{L^2}^2 &\le  \frac{\tau^3}{12}  \int_{t_{m-1}}^{t_{m+1}}\norm{\nabla\partial_{ss}\phi(s)}{L^2}^2 \, ds,   & 1\le m\le M-1 ,
	\label{eq:truncation-9}
	\\
\norm{\nabla\left(\phi^{\hf}-\phi^0 -\frac{\tau}{2}\partial_t\phi^0\right) }{L^2} &\le  \frac{\tau^3}{24} \int_{t_0}^{t_\hf} \norm{\nabla \partial_{ss} \phi(s)}{L^2}^2 ds  . &
	\label{eq:truncation-10}
	\end{align}
	\end{lem}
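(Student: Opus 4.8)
The plan is to recognize each quantity appearing in the statement as the local consistency error of a quadrature rule, of an interpolation/extrapolation formula, or of a projection, and to estimate it by Taylor's theorem with the integral (Peano) form of the remainder, followed by the Cauchy--Schwarz inequality in the time variable. There is no new idea involved; the work is the bookkeeping of Peano kernels and of powers of $\tau$, together with the standard approximation property of the Ritz projection $R_h$.

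I would begin with $\sigma_1^{m+\hf}$. Since $R_h$ is a fixed, time-independent linear operator it commutes with $\dtau$, and for any smooth $v$ one has $\dtau v^{m+\hf}=\tau^{-1}\int_{t_m}^{t_{m+1}}\partial_s v(s)\,ds$; hence $\sigma_1^{m+\hf}=\tau^{-1}\int_{t_m}^{t_{m+1}}(I-R_h)\partial_s\phi(s)\,ds$. Taking the $L^2(\Omega)$ norm, moving it inside the integral, and then applying Cauchy--Schwarz in $s$ gives $\norm{\sigma_1^{m+\hf}}{L^2}^2\le\tau^{-1}\int_{t_m}^{t_{m+1}}\norm{(I-R_h)\partial_s\phi(s)}{L^2}^2\,ds$, whereupon the standard estimate $\norm{(I-R_h)v}{L^2}\le C h^{q+1}\norm{v}{H^{q+1}}$ for the Ritz projection \eqref{eq:Ritz-projection} yields \eqref{eq:truncation-1}.

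For the midpoint-type errors \eqref{eq:truncation-2}--\eqref{eq:truncation-4-b} the idea is to expand $\phi(t_{m+1})$ and $\phi(t_m)$ by Taylor's theorem about the midpoint $t_{m+\hf}$ with integral remainder; the low-order terms cancel exactly because the two nodes are symmetric about $t_{m+\hf}$ (this is the source of second-order accuracy), leaving
\[
\tau\,\sigma_2^{m+\hf}=\tfrac12\int_{t_{m+\hf}}^{t_{m+1}}(t_{m+1}-s)^2\,\partial_{sss}\phi(s)\,ds-\tfrac12\int_{t_m}^{t_{m+\hf}}(s-t_m)^2\,\partial_{sss}\phi(s)\,ds ,
\]
and, with the quadratic kernels replaced by the linear kernels $t_{m+1}-s$ and $s-t_m$ and $\partial_{sss}\phi$ replaced by $\partial_{ss}\phi$, an entirely analogous representation of $\sigma_3^{m+\hf}$. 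One then applies the relevant spatial operator ($\nabla\Delta$, $\nabla$, or the identity), bounds the $L^2(\Omega)$ norm of each half-interval integral by Cauchy--Schwarz in $s$, and combines the two pieces via $\norm{a+b}{L^2}^2\le2\norm{a}{L^2}^2+2\norm{b}{L^2}^2$; the elementary time integrals $\int_0^{\tau/2}u^4\,du$ and $\int_0^{\tau/2}u^2\,du$ then produce the stated constants. Estimate \eqref{eq:truncation-4-b} is simply \eqref{eq:truncation-4} applied to the function $\phi^2$ in place of $\phi$, after observing that $\bigl(\phi^{m+\hf}\bigr)^2=\bigl(\phi^2\bigr)(t_{m+\hf})$, which is exactly why the regularity $\phi^2\in H^2\bigl(0,T;H^1(\Omega)\bigr)$ is assumed in \eqref{eq:higher-regularities}.

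The remaining estimates are of the same type, now centered at a node or at $t_0$. For \eqref{eq:truncation-5}--\eqref{eq:truncation-6} I would expand $\phi(t_{m\pm1})$ about $t_m$; the first-order terms cancel in the second difference $\phi^{m+1}-2\phi^m+\phi^{m-1}$, leaving $\int_{t_{m-1}}^{t_m}(s-t_{m-1})\,\partial_{ss}\phi(s)\,ds+\int_{t_m}^{t_{m+1}}(t_{m+1}-s)\,\partial_{ss}\phi(s)\,ds$, after which $\nabla\Delta$ (resp.\ $\nabla$) together with Cauchy--Schwarz over $[t_{m-1},t_{m+1}]$ and $\int_0^{\tau}u^2\,du=\tau^3/3$ give the bounds; \eqref{eq:truncation-9} is the consistency error of the extrapolation $\tfrac32\phi^m-\tfrac12\phi^{m-1}\approx\phi^{m+\hf}$, for which expanding $\phi(t_m),\phi(t_{m-1})$ about $t_{m+\hf}$ makes the constant and linear parts vanish by the choice of weights $\tfrac32,-\tfrac12$, and the residual remainder is handled as before; and \eqref{eq:truncation-10} is the bare first-order Taylor remainder $\phi^{\hf}-\phi^0-\tfrac\tau2\partial_t\phi^0=\int_{t_0}^{t_{\hf}}(t_{\hf}-s)\,\partial_{ss}\phi(s)\,ds$, to which $\nabla$ and Cauchy--Schwarz with $\int_0^{\tau/2}u^2\,du=\tau^3/24$ apply directly. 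The proof presents no real obstacle; the only item demanding care --- the ``hard part,'' though a mild one --- is the sharp tracking of constants and $\tau$-powers, which forces one to split each remainder integral at the point of expansion and to keep the Peano kernels in their sharpest convenient form, while keeping track of which part of \eqref{eq:higher-regularities} (e.g.\ $\phi\in H^1(0,T;H^{q+1})$ for \eqref{eq:truncation-1}, $\phi\in H^3(0,T;L^2)$ for \eqref{eq:truncation-2}, $\phi\in H^2(0,T;H^3)$ for \eqref{eq:truncation-3} and \eqref{eq:truncation-5}) renders the corresponding right-hand side finite.
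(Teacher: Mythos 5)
Your proposal is correct and follows exactly the route the paper takes: the paper's entire proof of this lemma is the one-line remark that each bound is ``a direct application of Taylor's Theorem with integral remainder,'' and your expansion about the midpoint (resp.\ about $t_m$ or $t_0$), cancellation of low-order terms by symmetry or by the choice of extrapolation weights, Cauchy--Schwarz in time, and the Ritz-projection approximation property for \eqref{eq:truncation-1} are precisely the suppressed details. The only caveat is that the crude splitting $\norm{a+b}{L^2}^2\le 2\norm{a}{L^2}^2+2\norm{b}{L^2}^2$ you describe reproduces the stated powers of $\tau$ but not necessarily the exact numerical constants (e.g.\ it yields $\tau^3/320$ rather than $\tau^3/640$ in \eqref{eq:truncation-2}); since every downstream use absorbs these into a generic $C$, this is immaterial.
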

	
	\begin{proof}
The proof of each of the inequalities above is a direct application of Taylor's Theorem with integral remainder. We suppress the details for the sake of brevity.
	\end{proof}
	
	\begin{lem}
	\label{lem-sigma4-estimate}
Suppose that $(\phi, \mu)$ is a weak solution to \eqref{eq:weak-error-a} -- \eqref{eq:weak-error-b}, with the additional regularities \eqref{eq:higher-regularities}. Then, there exists a constant $C>0$ independent of $h$ and $\tau$ -- but possibly dependent upon $T$ through the regularity estimates -- such that, for any $h, \tau >0$,
	\begin{align}
\norm{\nabla \sigma_4^{m+\hf}}{L^2}^2 \le& C\tau^3 \int_{t_m}^{t_{m+1}} \norm{\nabla \partial_{ss} \phi(s)}{L^2}^2 ds +C \tau^3 \int_{t_m}^{t_{m+1}} \norm{\partial_{ss} \phi^2(s)}{H^1}^2 ds.
	\end{align}
	\end{lem}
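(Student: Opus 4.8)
\noindent\emph{Proof plan.}
The plan is to reduce $\sigma_4^{m+\hf}$ to two pieces by purely algebraic manipulation and then estimate each piece with the tools already used in Lemma~\ref{lem-truncation-errors}.  Since $\chi(a,a)=a^{3}$, one has $\sigma_4^{m+\hf}=\chi(\phi^{m+1},\phi^{m})-\chi\bigl(\phi^{m+\hf},\phi^{m+\hf}\bigr)$.  Inserting the arithmetic mean $\bar\phi^{m+\hf}:=\tfrac12\phi^{m+1}+\tfrac12\phi^{m}$, and using the elementary identities $\chi(a,b)-\bigl(\tfrac{a+b}{2}\bigr)^{3}=\tfrac18(a+b)(a-b)^{2}$, $A^{3}-B^{3}=(A-B)(A^{2}+AB+B^{2})$, and $\bar\phi^{m+\hf}-\phi^{m+\hf}=\sigma_3^{m+\hf}$, one obtains, with $\theta^{m+\hf}:=\bigl(\bar\phi^{m+\hf}\bigr)^{2}+\bar\phi^{m+\hf}\phi^{m+\hf}+\bigl(\phi^{m+\hf}\bigr)^{2}$,
	\begin{equation*}
\sigma_4^{m+\hf}=\underbrace{\tfrac18\bigl(\phi^{m+1}+\phi^{m}\bigr)\bigl(\phi^{m+1}-\phi^{m}\bigr)^{2}}_{=:\,\mathrm{I}^{m+\hf}}\;+\;\underbrace{\sigma_3^{m+\hf}\,\theta^{m+\hf}}_{=:\,\mathrm{II}^{m+\hf}}.
	\end{equation*}
It then suffices to bound $\norm{\nabla\mathrm{I}^{m+\hf}}{L^2}$ and $\norm{\nabla\mathrm{II}^{m+\hf}}{L^2}$ separately.

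For $\mathrm{II}^{m+\hf}$ I would apply the product rule and the splitting $\norm{\nabla\mathrm{II}^{m+\hf}}{L^2}\le\norm{\theta^{m+\hf}}{L^\infty}\norm{\nabla\sigma_3^{m+\hf}}{L^2}+\norm{\sigma_3^{m+\hf}}{L^6}\norm{\nabla\theta^{m+\hf}}{L^3}$.  Since $\phi\in L^\infty(0,T;W^{1,6}(\Omega))$ and $d=2,3$, both $\norm{\theta^{m+\hf}}{L^\infty}$ and $\norm{\nabla\theta^{m+\hf}}{L^3}$ are bounded by a constant; $\norm{\nabla\sigma_3^{m+\hf}}{L^2}^2$ is controlled by \eqref{eq:truncation-4}, and $\norm{\sigma_3^{m+\hf}}{L^6}^2$ by the same Taylor-with-integral-remainder argument together with $H^{1}(\Omega)\hookrightarrow L^{6}(\Omega)$.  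This delivers the first term on the right-hand side of the claimed inequality.

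For $\mathrm{I}^{m+\hf}$ I would first rewrite it as $\tfrac18\bigl((\phi^{m+1})^{2}-(\phi^{m})^{2}\bigr)\bigl(\phi^{m+1}-\phi^{m}\bigr)$, apply the product rule, and estimate each of the two resulting products in $L^2$ by H\"older, pairing an $L^\infty$- (resp.\ $L^{3}$-) factor with an $L^2$- (resp.\ $L^{6}$-) factor.  Every increment that appears --- $\phi^{m+1}-\phi^{m}$, $(\phi^{m+1})^{2}-(\phi^{m})^{2}$, and their gradients --- I would write as an integral over $[t_m,t_{m+1}]$ of the corresponding time derivative of $\phi$, respectively $\phi^{2}$, exactly as in Lemma~\ref{lem-truncation-errors}; and one factor in each product I would, additionally, bound in $L^\infty(\Omega)$ (resp.\ $W^{1,6}(\Omega)$) by $C\tau$, exploiting the enhanced regularity $\phi\in H^{2}(0,T;H^{3}(\Omega))\hookrightarrow C\bigl([0,T];W^{1,\infty}(\Omega)\bigr)$, valid for $d=2,3$.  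A Cauchy--Schwarz inequality in time then turns the remaining $L^1$-in-time norms into $\tau^{1/2}$ times an $L^2$-in-time norm; squaring produces bounds of the form $C\tau^{3}\int_{t_m}^{t_{m+1}}(\cdots)\,ds$, the integrand $\norm{\partial_{ss}\phi^{2}(s)}{H^1}^2$ arising from the $\phi^{2}$-increments (here one uses $\phi^{2}\in H^{2}(0,T;H^{1}(\Omega))$) and $\norm{\nabla\partial_{ss}\phi(s)}{L^2}^2$ arising again from the $\phi$-increments.  Summing the bounds for $\mathrm{I}^{m+\hf}$ and $\mathrm{II}^{m+\hf}$ yields the asserted estimate.

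The step I expect to be the main obstacle is the last one.  Because $\mathrm{I}^{m+\hf}$ is genuinely quadratic --- not a finite difference of a single smooth quantity --- after differentiation the Lebesgue exponents must be allocated so that no factor demands more spatial regularity than the available $W^{1,6}(\Omega)$ (in particular, no $W^{1,3}(\Omega)$-control of $(\phi^{m+1})^{2}-(\phi^{m})^{2}$ is available when $d=3$), and, to reach the power $\tau^{3}$ rather than merely $\tau^{2}$, one must spend the $C([0,T];W^{1,\infty}(\Omega))$-bound for $\phi$ to absorb one increment factor at the cost of a single power of $\tau$.  Once the displayed decomposition and this exponent allocation are fixed, everything that remains is the same Taylor/H\"older/Sobolev/Cauchy--Schwarz estimation that underlies Lemma~\ref{lem-truncation-errors}.
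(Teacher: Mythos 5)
Your decomposition is algebraically correct, but it is genuinely different from the paper's, and the difference matters for the precise form of the bound. The paper writes $\sigma_4^{m+\hf}=A\,(B-C)+C\,(A-C^2)$ with $A=\hf(\phi^{m+1})^2+\hf(\phi^m)^2$, $B=\hf\phi^{m+1}+\hf\phi^m$, $C=\phi^{m+\hf}$, applies the product rule to $\nabla\sigma_4^{m+\hf}$, and pairs factors by H\"older exactly as you do for your term $\mathrm{II}$. The point of that particular splitting is that \emph{both} small factors, $B-C=\sigma_3^{m+\hf}$ and $A-C^2$, are midpoint-quadrature errors (of $\phi$ and of $\phi^2$ respectively), so they are controlled by the second-time-derivative estimates \eqref{eq:truncation-4} and \eqref{eq:truncation-4-b}, and the two integrands $\norm{\nabla\partial_{ss}\phi}{L^2}^2$ and $\norm{\partial_{ss}\phi^2}{H^1}^2$ in the statement drop out directly, with no need for a $C([0,T];W^{1,\infty})$ bound on $\phi$ or for trading an increment factor against a power of $\tau$.

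Your route handles $\mathrm{II}^{m+\hf}=\sigma_3^{m+\hf}\theta^{m+\hf}$ in essentially the paper's way, but your term $\mathrm{I}^{m+\hf}=\tfrac18(\phi^{m+1}+\phi^m)(\phi^{m+1}-\phi^m)^2$ is a product of two \emph{first} differences, not a second-difference quantity. As you yourself anticipate, the only way to reach $O(\tau^3)$ there is to write each increment as $\int_{t_m}^{t_{m+1}}\partial_s(\cdot)\,ds$ and absorb one of them in $L^\infty$ at cost $C\tau$; this inevitably produces integrands of the form $\norm{\partial_s\phi^2(s)}{H^1}^2$ and $\norm{\nabla\partial_s\phi(s)}{L^2}^2$, i.e.\ first time derivatives. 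These are not dominated by the two second-derivative integrals on the right-hand side of the lemma (take $\phi$ affine in $t$ on $[t_m,t_{m+1}]$: $\mathrm{I}^{m+\hf}\neq 0$ while $\nabla\partial_{ss}\phi\equiv 0$, and $\partial_{ss}\phi^2$ alone does not reproduce your bound without also invoking $\norm{\partial_{ss}\phi}{L^2}$). So your argument proves a \emph{variant} of the lemma with additional, lower-order-in-time integrands in $\mathcal{R}^{m+\hf}$ --- still summing to $O(\tau^4+h^{2q})$ under \eqref{eq:higher-regularities}, and hence still sufficient for Theorem~\ref{thm-error-estimate} --- but it does not establish the inequality as stated. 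If you want the stated right-hand side verbatim, regroup as the paper does so that the quadratic-in-increment term never appears.
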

	\begin{proof}
We begin with the expansion
	\begin{align}
\nabla\sigma_4^{m+\hf} =&\, \left(\hf\phi^{m+1} +\hf \phi^m - \phi^{m+\hf} \right) \nabla\left(\hf\left(\phi^{m+1}\right)^2+\hf\left(\phi^m\right)^2 \right)
	\nonumber
	\\
& + \left(\hf\left(\phi^{m+1}\right)^2+\hf\left(\phi^m\right)^2 \right) \nabla \left(\hf\phi^{m+1} +\hf \phi^m - \phi^{m+\hf} \right)
	\nonumber
	\\
& + \phi^{m+\hf} \, \nabla\left(\hf\left(\phi^{m+1} \right)^2+\hf\left(\phi^m\right)^2 - \left(\phi^{m+\hf}\right)^2 \right)
	\nonumber
	\\
& +  \left(\hf\left(\phi^{m+1} \right)^2+\hf\left(\phi^m\right)^2 - \left(\phi^{m+\hf}\right)^2 \right)\nabla \phi^{m+\hf} . 
	\end{align}
By the triangle inequality, Young's inequality, and the embedding $H^1(\Omega) \hookrightarrow L^6(\Omega)$, we have
	\begin{align}
\norm{\nabla\sigma_4^{m+\hf}}{L^2} \le & \, \norm{\hf\phi^{m+1} +\hf \phi^m - \phi^{m+\hf}}{L^6} \norm{\nabla\left(\hf\left(\phi^{m+1}\right)^2+\hf\left(\phi^m\right)^2 \right)}{L^3} 
	\nonumber
	\\
& + \norm{\hf\left(\phi^{m+1}\right)^2+\hf\left(\phi^m\right)^2}{L^\infty} \norm{\nabla\left(\hf\phi^{m+1} +\hf \phi^m - \phi^{m+\hf}\right)}{L^2}
	\nonumber
	\\
& + \norm{\phi^{m+\hf}}{L^\infty} \norm{\nabla\left(\hf\left(\phi^{m+1} \right)^2+\hf\left(\phi^m\right)^2 - \left(\phi^{m+\hf}\right)^2 \right)}{L^2}
	\nonumber
	\\
& + \norm{\hf\left(\phi^{m+1} \right)^2+\hf\left(\phi^m\right)^2 - \left(\phi^{m+\hf}\right)^2}{L^6} \norm{\nabla\phi^{m+\hf}}{L^3} 
	\nonumber
	\\
\le & \, C\left\{\norm{\phi^{m+1}}{L^\infty}^2 + \norm{\phi^m}{L^\infty}^2 + \norm{\phi^{m+1}}{L^6}\norm{\nabla\phi^{m+1}}{L^6} + \norm{\phi^m}{L^6}\norm{\nabla\phi^m}{L^6} \right\}
	\nonumber
	\\
& \hspace{1in} \times \norm{\nabla \left(\hf\phi^{m+1} +\hf \phi^m - \phi^{m+\hf}\right)}{L^2}
	\nonumber
	\\
& + C\left\{\norm{\phi^{m+\hf}}{L^\infty} + \norm{\nabla\phi^{m+\hf}}{L^3} \right\} \times \norm{\hf\left(\phi^{m+1} \right)^2+\hf\left(\phi^m\right)^2 - \left(\phi^{m+\hf}\right)^2}{H^1}.
	\nonumber
	\\
& \mbox{}
	\end{align}
Using the assumed regularities \eqref{eq:higher-regularities} of the PDE solution, and appealing to the truncation error estimates \eqref{eq:truncation-4} and \eqref{eq:truncation-4-b}, the result follows.
	\end{proof}

	\begin{lem}
	\label{lem-sigma5-estimate}
Suppose that $(\phi, \mu)$ is a weak solution to \eqref{eq:weak-error-a} -- \eqref{eq:weak-error-b}, with the additional regularities \eqref{eq:higher-regularities}. Then, there exists a constant $C>0$ independent of $h$ and $\tau$, but possibly dependent upon $T$, such that, for any $h, \tau >0$,
	\begin{align}
\norm{\nabla \sigma_5^{m+\hf}}{L^2}^2 \le& \, C \norm{\nabla \e^{\phi,m+1}}{L^2}^2 + C \norm{\nabla \e^{\phi,m}}{L^2}^2,
	\end{align}
where $\e^{\phi,m} := \phi^{m} - \phih^{m}$.
	\end{lem}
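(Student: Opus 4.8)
The plan is to reduce $\sigma_5^{m+\hf}$ algebraically to a linear combination of the two errors $\e^{\phi,m+1}$ and $\e^{\phi,m}$ with polynomial coefficients, then differentiate by the product rule and bound each resulting term by H\"older's and Young's inequalities, using the $L^\infty$ stability of $\phih^m$ from Lemma~\ref{lem-a-priori-stability-ultimate}, the assumed $W^{1,6}$ regularity of $\phi$, and a Poincar\'e estimate for the error. Write $\chi(u,v) = \tfrac14\left(u^3+u^2v+uv^2+v^3\right)$ and set $u=\phih^{m+1}$, $v=\phih^m$, $p=\phi^{m+1}$, $q=\phi^m$, so that $\e^{\phi,m+1}=p-u$ and $\e^{\phi,m}=q-v$. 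Using the factorizations $u^3-p^3=(u-p)(u^2+up+p^2)$, $u^2v-p^2q=v(u+p)(u-p)+p^2(v-q)$, $uv^2-pq^2=v^2(u-p)+q(v+q)(v-q)$, and $v^3-q^3=(v-q)(v^2+vq+q^2)$, one gets
\[
\sigma_5^{m+\hf} = -\,A^{m+\hf}\,\e^{\phi,m+1} \;-\; B^{m+\hf}\,\e^{\phi,m},
\]
where $A^{m+\hf},B^{m+\hf}$ are fixed quadratic polynomials in $u,v,p,q$ with rational coefficients.

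Next I would apply the product rule, $\nabla\sigma_5^{m+\hf} = -(\nabla A^{m+\hf})\e^{\phi,m+1} - A^{m+\hf}\nabla\e^{\phi,m+1} -(\nabla B^{m+\hf})\e^{\phi,m} - B^{m+\hf}\nabla\e^{\phi,m}$, and estimate the four pieces. For the terms where the gradient falls on the error, $\norm{A^{m+\hf}\nabla\e^{\phi,m+1}}{L^2}\le\norm{A^{m+\hf}}{L^\infty}\norm{\nabla\e^{\phi,m+1}}{L^2}$; since $A^{m+\hf}$ is quadratic in $u,v,p,q$, its $L^\infty$ norm is controlled by $\norm{\phih^{m+1}}{L^\infty}$, $\norm{\phih^m}{L^\infty}$ (Lemma~\ref{lem-a-priori-stability-ultimate}) and $\norm{\phi^{m+1}}{L^\infty},\norm{\phi^m}{L^\infty}$, the latter coming from $\phi\in L^\infty(0,T;W^{1,6}(\Omega))\hookrightarrow L^\infty(0,T;L^\infty(\Omega))$ for $d=2,3$. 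For the terms where the gradient falls on the coefficient, H\"older with exponents $3$ and $6$ gives $\norm{(\nabla A^{m+\hf})\e^{\phi,m+1}}{L^2}\le\norm{\nabla A^{m+\hf}}{L^3}\norm{\e^{\phi,m+1}}{L^6}$; then $H^1(\Omega)\hookrightarrow L^6(\Omega)$ together with the Poincar\'e inequality yields $\norm{\e^{\phi,m+1}}{L^6}\le C\norm{\nabla\e^{\phi,m+1}}{L^2}$. The Poincar\'e step is legitimate because $R_h$ preserves the mean (second condition in \eqref{eq:Ritz-projection}) and the scheme conserves mass (take $\nu=1$ in \eqref{eq:scheme-a} and \eqref{eq:scheme-a-initial}, noting $\aiprd{\cdot}{1}=0$), so $\iprd{\phih^m}{1}=\iprd{\phi^m}{1}$ and hence $\e^{\phi,m}\in L_0^2(\Omega)$ for every $m$. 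The analogous bounds hold for the $B^{m+\hf}$ and $\e^{\phi,m}$ terms.

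It then remains to bound $\norm{\nabla A^{m+\hf}}{L^3}$ and $\norm{\nabla B^{m+\hf}}{L^3}$. Each is a sum of products of a linear polynomial in $u,v,p,q$ with one of $\nabla u,\nabla v,\nabla p,\nabla q$, so it suffices to bound $\norm{\nabla\phih^m}{L^3}$ and $\norm{\nabla\phi^m}{L^3}$; the latter is immediate from $\phi\in L^\infty(0,T;W^{1,6}(\Omega))$. For the former I would combine the a priori bounds $\norm{\Delta_h\phih^m}{L^2}^2\le C(T+1)$ and $\norm{\phih^m}{H^1}^2\le C$ from Lemmas~\ref{lem-a-priori-stability-ultimate} and \ref{lem-a-priori-stability-trivial} with a discrete Sobolev estimate of the form $\norm{\nabla v_h}{L^3}\le C\bigl(\norm{\Delta_h v_h}{L^2}+\norm{v_h}{H^1}\bigr)$ for $v_h\in S_h$ on the quasi-uniform family $\mathcal{T}_h$ (the $W^{1,3}$ counterpart of the discrete Gagliardo--Nirenberg inequality \eqref{eq:infinity-bound}, valid in two and three dimensions). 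Collecting the four estimates, adding, and squaring gives $\norm{\nabla\sigma_5^{m+\hf}}{L^2}^2\le C\norm{\nabla\e^{\phi,m+1}}{L^2}^2 + C\norm{\nabla\e^{\phi,m}}{L^2}^2$, with $C$ independent of $h,\tau$ but dependent on $T$ through the a priori bounds. The one delicate point is precisely this last step: a naive inverse inequality would reintroduce a negative power of $h$ into $\norm{\nabla\phih^m}{L^3}$, so the argument genuinely rests on the $L^2$ control of the \emph{discrete} Laplacian of $\phih^m$ obtained earlier, and on having only $\nabla\e^\phi$ (not $\e^\phi$) on the right-hand side, which is what the mass-conservation/Poincar\'e observation secures.
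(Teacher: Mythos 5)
Your argument is correct and reaches the stated bound, but it takes a genuinely different route from the paper at the one point where the route matters. The paper expands $\nabla\sigma_5^{m+\hf}$ directly and regroups so that the coefficients multiplying $\nabla\e^{\phi,m+1}$ and $\nabla\e^{\phi,m}$ are polynomials in the \emph{discrete} solution alone (controlled in $L^\infty$ by Lemma~\ref{lem-a-priori-stability-ultimate}), while every gradient appearing in the coefficients of the zeroth-order error factors is a gradient of the \emph{continuous} solution (controlled in $L^6$ by $\phi\in L^\infty(0,T;W^{1,6})$). In other words, the decomposition is engineered so that $\nabla\phih^m$ never appears in $L^p$ for $p>2$. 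Your factorization $\sigma_5^{m+\hf}=-A\,\e^{\phi,m+1}-B\,\e^{\phi,m}$ does not have this property: $\nabla A$ and $\nabla B$ contain $\nabla\phih^{m+1}$ and $\nabla\phih^m$, which forces you to invoke a discrete Sobolev estimate $\norm{\nabla v_h}{L^3}\le C\left(\norm{\Delta_h v_h}{L^2}+\norm{v_h}{H^1}\right)$. That estimate is true on the quasi-uniform family here (it follows from discrete elliptic regularity by comparing $v_h$ with the continuous Neumann solution of $-\Delta w=-\Delta_h v_h$ and using an inverse inequality, exactly as for \eqref{eq:infinity-bound}), and you correctly identify that a naive inverse estimate would fail; but it is an extra lemma the paper's arrangement avoids entirely, so the paper's proof is the leaner of the two. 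Two smaller remarks: (i) your third factorization identity has a slip --- it should read $uv^2-pq^2=v^2(u-p)+p(v+q)(v-q)$ (or $=q^2(u-p)+u(v+q)(v-q)$), not with the coefficient $q$ on the second factor; this does not affect the structure of the argument since any such split yields quadratic $A,B$. (ii) Your justification of the Poincar\'e step via mass conservation of both the scheme and the PDE, so that $\e^{\phi,m}\in L_0^2(\Omega)$, is a point the paper leaves implicit in its final line, and spelling it out is a genuine improvement.
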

	\begin{proof}
We begin with the detailed expansion
	\begin{align}
4\nabla\sigma_5^{m+\hf} =& \left\{\left(\phih^{m+1}\right)^2 + \left(\phih^m\right)^2+2 \phih^{m+1} \left(\phih^{m+1}+\phih^m \right)\right\} \nabla\left( \phih^{m+1}-\phi^{m+1}\right)
	\nonumber
	\\
 &+ \left\{\left(\phih^{m+1}\right)^2 + \left(\phih^m\right)^2+2 \phih^m \left(\phih^{m+1}+\phih^m \right) \right\} \nabla\left(\phih^m-\phi^m\right)
	\nonumber
	\\
&+ \bigg\{\nabla\left(\phi^{m+1}+\phi^m\right)\cdot \left(\phih^{m+1}+\phi^{m+1} \right) + 2 \nabla\phi^{m+1}\left(\phih^{m+1}+\phih^m \right)
	\nonumber
	\\
& \hspace{0.5in} +2  \phi^{m+1} \nabla\phi^{m+1} +2  \phi^m\nabla\phi^m\bigg\} \left(\phih^{m+1}-\phi^{m+1} \right)
	\nonumber
	\\
&+ \bigg\{ \nabla\left(\phi^{m+1}+\phi^m\right)\cdot \left(\phih^m+\phi^m \right) +2 \nabla\phi^m\left(\phih^{m+1}+\phih^m \right) 
	\nonumber
	\\
& \hspace{0.5in} +2  \phi^{m+1} \nabla\phi^{m+1} +2  \phi^m\nabla\phi^m\bigg\} \left(\phih^m-\phi^m \right) .
	\end{align}	
Then, using  the unconditional \emph{a priori} estimates in Lemmas~\ref{lem-a-priori-stability-trivial} and \ref{lem-a-priori-stability-ultimate}, the assumption that $\phi\in L^\infty\left(0,T;W^{1,6}(\Omega)\right)$, and the embedding $H^1(\Omega) \hookrightarrow L^6(\Omega)$ we have, for  any $0\le m\le M-1$,
	\begin{align}
\norm{\nabla \sigma_5^{m+\hf}}{L^2} \le & \, C\bigg\{ \norm{\phih^{m+1}}{L^\infty}^2 +\norm{\phih^m}{L^\infty}^2 \bigg\} \left( \norm{\nabla \e^{\phi,m+1}}{L^2}+\norm{\nabla \e^{\phi,m}}{L^2}\right) 
	\nonumber
	\\
&+  C\bigg\{\left( \norm{\nabla\phi^{m+1}}{L^6} +\norm{\nabla\phi^m}{L^6} \right)\cdot \left( \norm{\phi^{m+1}}{L^6} +\norm{\phi^m}{L^6}+ \norm{\phih^{m+1}}{L^6} +\norm{\phih^m}{L^6}\right) \bigg\}
	\nonumber
	\\
& \hspace{0.75in}\times \left( \norm{\e^{\phi,m+1}}{L^6}+\norm{\e^{\phi,m}}{L^6}\right) 
	\nonumber
	\\
\le & C  \norm{\nabla \e^{\phi,m+1}}{L^2}+C\norm{\nabla \e^{\phi,m}}{L^2} .
	\end{align}
	\end{proof}

	\begin{lem}
	\label{lem-sigma6-estimate}
Suppose that $(\phi, \mu)$ is a weak solution to \eqref{eq:weak-error-a} -- \eqref{eq:weak-error-b}, with the additional regularities \eqref{eq:higher-regularities}. Then, there exists a constant $C>0$ independent of $h$ and $\tau$ such that, for any $h, \tau >0$,
	\begin{align}
\norm{\nabla \sigma_6^{m+\hf}}{L^2}^2 \le& \,\gamma_m C \tau^3   \int_{t_{m-1}}^{t_m}\norm{\nabla\partial_{ss}\phi(s)}{L^2}^2 \, ds +  C \tau^3  \int_{t_m}^{t_{m+1}}\norm{\nabla\partial_{ss}\phi(s)}{L^2}^2 \, ds
	\nonumber
	\\
& + C\norm{\nabla \e^{\phi,m}}{L^2}^2+\gamma_mC\norm{\nabla \e^{\phi,m-1}}{L^2}^2 +\delta_{0,m} C h^{2q}\left| \phi_0 \right|_{H^{q+1}}^2,
	\end{align}
where $\e^{\phi,m} := \phi^{m} - \phih^{m}$ and $\delta_{k,\ell}$ is the Kronecker delta.
	\end{lem}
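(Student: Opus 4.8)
The plan is to handle the two cases in the definition of $\sigma_6^{m+\hf}$ separately. In each case I decompose $\sigma_6^{m+\hf}$ as a pure temporal consistency error plus a short linear combination of the spatial errors $\e^{\phi,\cdot}$, and then invoke the truncation estimates already packaged in Lemma~\ref{lem-truncation-errors} together with the approximation properties of the Ritz projection $R_h$.

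For $1\le m\le M-1$, recall $\sigma_6^{m+\hf} = \phi^{m+\hf} - \phitil^{m+\hf} = \phi^{m+\hf} - \tfrac32\phih^m + \tfrac12\phih^{m-1}$. Adding and subtracting $\tfrac32\phi^m - \tfrac12\phi^{m-1}$ yields
\[
\sigma_6^{m+\hf} = \left(\phi^{m+\hf} - \tfrac32\phi^m + \tfrac12\phi^{m-1}\right) + \tfrac32\,\e^{\phi,m} - \tfrac12\,\e^{\phi,m-1}.
\]
Applying $\nabla$, using the triangle inequality together with $(a+b+c)^2\le 3a^2+3b^2+3c^2$, and bounding the gradient of the first group by the truncation estimate~\eqref{eq:truncation-9} — after splitting $\int_{t_{m-1}}^{t_{m+1}} = \int_{t_{m-1}}^{t_m} + \int_{t_m}^{t_{m+1}}$ — produces precisely the $\gamma_m$-weighted terms in the claimed bound (with $\gamma_m = 1$ here).

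For $m=0$, recall $\sigma_6^{\hf} = \phi^{\hf} - \phih^0 - \tfrac\tau2\partial_t\phi^0$. Since $\phih^0 = R_h\phi_0$, we may write
\[
\sigma_6^{\hf} = \left(\phi^{\hf} - \phi^0 - \tfrac\tau2\partial_t\phi^0\right) + \left(\phi^0 - \phih^0\right) = \left(\phi^{\hf} - \phi^0 - \tfrac\tau2\partial_t\phi^0\right) + \eA^{\phi,0}.
\]
Then $(a+b)^2\le 2a^2+2b^2$, the truncation estimate~\eqref{eq:truncation-10} (noting $\int_{t_0}^{t_\hf}\subset\int_{t_0}^{t_1}$, which matches the $C\tau^3\int_{t_m}^{t_{m+1}}$ term with $\gamma_0 = 0$), and the standard Ritz approximation estimate $\norm{\nabla\eA^{\phi,0}}{L^2}\le Ch^q|\phi_0|_{H^{q+1}}$ give the remaining terms, including the $\delta_{0,m}Ch^{2q}|\phi_0|_{H^{q+1}}^2$ contribution. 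Assembling the two cases under the $\gamma_m,\delta_{0,m}$ notation gives the lemma.

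There is no genuine analytic obstacle here; the argument is bookkeeping built on Taylor's theorem with integral remainder (already carried out in Lemma~\ref{lem-truncation-errors}) and on the approximation properties of $R_h$. The only points requiring care are matching the integration limits in~\eqref{eq:truncation-9}--\eqref{eq:truncation-10} to the $\gamma_m$-weighted integrals in the statement, and recognizing that at the initialization step $m=0$ the error $\e^{\phi,0}$ coincides with the Ritz projection error $\eA^{\phi,0}$ of the initial data — which is why, in that case, the bound involves $h^{2q}|\phi_0|_{H^{q+1}}^2$ rather than an uncontrolled $\norm{\nabla\e^{\phi,0}}{L^2}^2$ that would later complicate the Gronwall-type argument.
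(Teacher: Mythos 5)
Your proposal is correct and follows essentially the same route as the paper: the same two-case split, the same decomposition of $\sigma_6^{m+\hf}$ into a Taylor-type consistency term plus $\tfrac32\e^{\phi,m}-\tfrac12\e^{\phi,m-1}$ (respectively the Ritz projection error $\eA^{\phi,0}$ at $m=0$), and the same appeal to \eqref{eq:truncation-9}, \eqref{eq:truncation-10}, and the standard estimate for $R_h$. No discrepancies worth noting.
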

	\begin{proof}
For $m=0$, using the truncation error estimate~\eqref{eq:truncation-10} and a standard finite element estimate for the Ritz projection, we have
	\begin{align}
\norm{\nabla\sigma_6^{\hf}}{L^2}^2 & \le \,  2\norm{\nabla\left(\phi^{\hf} - \phi_0 -\frac{\tau}{2}\partial_t\phi(0)\right)}{L^2}^2 +2\norm{\nabla\left(\phi_0 - \phi_h^0\right)}{L^2}^2
	\nonumber
	\\
& \le \, 2\frac{\tau^3}{24} \int_{t_0}^{t_\hf} \norm{\nabla \partial_{ss} \phi(s)}{L^2}^2 ds + C h^{2q}\left| \phi_0 \right|_{H^{q+1}}^2,
	\end{align}
with the observation that $\phih^0 := R_h\phi_0$. For $1\le m\le M-1$, using the truncation error estimate~\eqref{eq:truncation-9}, we obtain
	\begin{equation}
\norm{\nabla\sigma_6^{m+\hf}}{L^2}^2 \le 3\frac{\tau^3}{6}  \int_{t_{m-1}}^{t_{m+1}}\norm{\nabla\partial_{ss}\phi(s)}{L^2}^2 \, ds + \frac{27}{4}\norm{\nabla\e^{\phi,m}}{L^2}^2 + \frac{3}{4}\norm{\nabla\e^{\phi,m-1}}{L^2}^2 .
	\end{equation}
	\end{proof}

We now proceed to estimate the terms on the right-hand-side of \eqref{eq:error-eq}. We will need the following technical lemmas.  The proof of the next result can be found in~\cite{diegel14}.
	\begin{lem}
	\label{lem-technical-L2-minus1}
Suppose $g \in H^1(\Omega)$, and $v \in \Soh$.  Then
	\begin{equation}
\left|\iprd{g}{v}\right| \le C \norm{\nabla g}{L^2} \, \norm{v}{-1,h}  ,
	\end{equation}
for some $C>0$ that is independent of $h$.
	\end{lem}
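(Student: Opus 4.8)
The plan is to reduce the estimate to two standard facts: the characterization of $\norm{\,\cdot\,}{-1,h}$ through a discrete inverse Laplacian, and the $H^1$-stability of the $L^2$ projection onto $S_h$, which is the only place where the quasi-uniformity of $\cT_h$ enters. First I would recall the relevant definition from~\cite{diegel14}. Let $\cG_h : \Soh \to \Soh$ be the discrete Green's operator determined by $\aiprd{\cG_h w}{\xi} = \iprd{w}{\xi}$ for all $\xi \in S_h$ (with $\cG_h w$ normalized to have zero mean); this problem is uniquely solvable in $\Soh$ because $\iprd{w}{1}=0$, and the constant test function $\xi = 1$ poses no obstruction. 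One then has $\iprdmone{w_1}{w_2} = \aiprd{\cG_h w_1}{\cG_h w_2} = \iprd{w_1}{\cG_h w_2}$, and in particular $\norm{v}{-1,h}^2 = \iprd{v}{\cG_h v} = \norm{\nabla \cG_h v}{L^2}^2$. If $g$ is not mean-zero, I would first replace it by $g - |\Omega|^{-1}\iprd{g}{1}$; this changes neither $\iprd{g}{v}$, since $\iprd{v}{1}=0$, nor $\norm{\nabla g}{L^2}$.

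Next, let $\mathcal{Q}_h : L^2(\Omega) \to S_h$ denote the $L^2$ projection. Since $v \in S_h$, orthogonality of the projection gives $\iprd{g}{v} = \iprd{\mathcal{Q}_h g}{v}$, and since $\mathcal{Q}_h g \in S_h$ it is an admissible test function in the defining relation for $\cG_h v$, so
\[
\iprd{g}{v} = \iprd{v}{\mathcal{Q}_h g} = \aiprd{\cG_h v}{\mathcal{Q}_h g} = \iprd{\nabla \cG_h v}{\nabla \mathcal{Q}_h g}.
\]
The Cauchy--Schwarz inequality then yields $\left|\iprd{g}{v}\right| \le \norm{\nabla \cG_h v}{L^2}\,\norm{\nabla \mathcal{Q}_h g}{L^2} = \norm{v}{-1,h}\,\norm{\nabla \mathcal{Q}_h g}{L^2}$, and invoking the $H^1$-stability of $\mathcal{Q}_h$, namely $\norm{\nabla \mathcal{Q}_h g}{L^2} \le C\,\norm{\nabla g}{L^2}$ with $C$ depending only on the shape-regularity and quasi-uniformity of $\cT_h$ (as used already in the proof of Lemma~\ref{lem-improved-a-priori-stabilities}), delivers the claim.

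The one genuinely nontrivial ingredient -- and the only place mesh regularity is used -- is the $H^1$-stability of $\mathcal{Q}_h$; everything else is an exact algebraic identity followed by Cauchy--Schwarz. I expect this to be the main obstacle in the sense that it cannot be circumvented cheaply: bounding $\norm{\nabla \mathcal{Q}_h g}{L^2}$ instead by an inverse inequality together with the $L^2$-stability of $\mathcal{Q}_h$ would lose a factor $h^{-1}$ and hence fail to produce an $h$-independent constant, so the full $H^1$-stability result (e.g.\ Bramble--Pasciak--Steinbach, or Crouzeix--Thom\'ee for quasi-uniform meshes) is essential.
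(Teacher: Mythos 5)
Your proof is correct. The paper itself does not prove this lemma in-text but defers to~\cite{diegel14}, and your argument is the standard one used there: identify $\norm{v}{-1,h} = \norm{\nabla \mathsf{T}_h v}{L^2}$ via the discrete Green's operator, write $\iprd{g}{v} = \iprd{\mathcal{Q}_h g}{v} = \iprd{\nabla \mathsf{T}_h v}{\nabla \mathcal{Q}_h g}$, apply Cauchy--Schwarz, and conclude with the $H^1$-stability of the $L^2$ projection on quasi-uniform meshes --- the same stability fact the paper already invokes in the proof of Lemma~\ref{lem-improved-a-priori-stabilities}. Your observation that an inverse-inequality shortcut would lose a factor of $h^{-1}$ correctly identifies why the genuine $H^1$-stability result is needed; the preliminary reduction of $g$ to mean zero is harmless but not actually required, since $\mathcal{Q}_h g$ is an admissible test function for the Green's operator regardless of its mean.
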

	\begin{lem}
	\label{lem-error-rhs-control}
Suppose that $(\phi, \mu)$ is a weak solution to \eqref{eq:weak-error-a} -- \eqref{eq:weak-error-b}, with the additional regularities \eqref{eq:higher-regularities}.  Then, for any $h$, $\tau >0$ and any $\alpha > 0$ there exists a constant $C=C(\alpha,T)>0$, independent of $h$ and $\tau$, such that, for $0 \le m \le M-1$,
	\begin{align}
	\nonumber
&\frac{\varepsilon}{2} \, \aiprd{\eh^{\phi,m+1} + \eh^{\phi,m}}{\dtau \eh^{\phi, m+\frac12}} + \frac{\gamma_m\varepsilon\tau^2}{4} \, \aiprd{\ddtau\eh^{\phi,m}}{\dtau \eh^{\phi, m+\frac12}} + \frac{\varepsilon}{2} \, \norm{\nabla \eh^{\mu, m+\frac12}}{L^2}^2 
	\\
&\quad\le \, C \norm{\nabla \eh^{\phi,m+1}}{L^2}^2 + C \norm{\nabla \eh^{\phi,m}}{L^2}^2 + \gamma_m C \norm{\nabla \eh^{\phi,m-1}}{L^2}^2 + \alpha \norm{\dtau \eh^{\phi,m+\frac12}}{-1,h}^2 + C \mathcal{R}^{m+\hf},
	\label{eq:right-hand-side-estimate}
	\end{align}
where	
	\begin{align}
\mathcal{R}^{m+\hf} = &\,  \frac{h^{2q+2}}{\tau} \int_{t_m}^{t_{m+1}} \norm{\partial_s\phi(s)}{H^{q+1}}^2  ds + h^{2q}\left| \mu^{m+\frac12} \right|_{H^{q+1}}^2 
	\nonumber
	\\
& +  h^{2q}\left| \phi^{m+1} \right|_{H^{q+1}}^2  +  h^{2q}\left| \phi^{m} \right|_{H^{q+1}}^2 + \gamma_m h^{2q}\left| \phi^{m-1} \right|_{H^{q+1}}^2 
	\nonumber
	\\
& +  \tau^3 \int_{t_m}^{t_{m+1}}\norm{\partial_{sss}\phi(s)}{L^2}^2 ds  +  \tau^3 \int_{t_m}^{t_{m+1}} \norm{\partial_{ss} \phi^2(s)}{H^1}^2 ds 
	\nonumber
	\\
& + \gamma_m \tau^3 \int_{t_{m-1}}^{t_m}\norm{\nabla\partial_{ss}\phi(s)}{L^2}^2 \, ds +  \tau^3 \int_{t_m}^{t_{m+1}}\norm{\nabla\partial_{ss}\phi(s)}{L^2}^2 \, ds
	\nonumber
	\\
& +  \gamma_m \tau^3 \int_{t_{m-1}}^{t_m} \norm{\nabla \Delta \partial_{ss} \phi(s)}{L^2}^2 ds +  \tau^3 \int_{t_m}^{t_{m+1}} \norm{\nabla \Delta \partial_{ss} \phi(s)}{L^2}^2 ds .
	\label{eq:consistency}
	\end{align}

	\end{lem}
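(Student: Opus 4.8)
The plan is to estimate each of the five terms on the right-hand side of the key error equation \eqref{eq:error-eq} individually, and to absorb the unavoidable ``dangerous'' contributions into the $\frac{\varepsilon}{2}\norm{\nabla\eh^{\mu,m+\frac12}}{L^2}^2$ term on the left and into the small parameter $\alpha\norm{\dtau\eh^{\phi,m+\frac12}}{-1,h}^2$. First I would use the discrete Laplacian relation and the observation that $\dtau\eh^{\phi,m+\frac12}$ has zero mean (since both $R_h\phi$ and $\phi_h$ conserve mass) to rewrite each term of the form $\iprd{g}{\dtau\eh^{\phi,m+\frac12}}$ via Lemma~\ref{lem-technical-L2-minus1} as $\le C\norm{\nabla g}{L^2}\norm{\dtau\eh^{\phi,m+\frac12}}{-1,h}$, followed by Young's inequality to split off $\alpha\norm{\dtau\eh^{\phi,m+\frac12}}{-1,h}^2$ and a constant times $\norm{\nabla g}{L^2}^2$. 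The term $\iprd{\sigma_1^{m+\frac12}+\sigma_2^{m+\frac12}}{\eh^{\mu,m+\frac12}}$ is handled differently: since $\sigma_1$ and $\sigma_2$ need not have zero mean but $\eh^{\mu,m+\frac12}$ also need not, I would instead project, writing $\iprd{\sigma_i}{\eh^{\mu,m+\frac12}}\le\norm{\sigma_i}{L^2}\norm{\eh^{\mu,m+\frac12}}{L^2}$ and then controlling $\norm{\eh^{\mu,m+\frac12}}{L^2}$ by $\norm{\nabla\eh^{\mu,m+\frac12}}{L^2}$ plus lower-order error contributions (using the error equation \eqref{eq:error-b} tested against $\eh^{\mu,m+\frac12}$, together with the a priori bounds from Lemmas~\ref{lem-a-priori-stability-trivial}--\ref{lem-a-priori-stability-ultimate}); alternatively one may simply use $\norm{\sigma_i}{-1,h}\le\norm{\sigma_i}{L^2}$ and Lemma~\ref{lem-technical-L2-minus1} again in the form $\iprd{\sigma_i}{\eh^{\mu,m+\frac12}}$. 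These $\sigma_1,\sigma_2$ pieces contribute the $\frac{h^{2q+2}}{\tau}\int\norm{\partial_s\phi}{H^{q+1}}^2$ and $\tau^3\int\norm{\partial_{sss}\phi}{L^2}^2$ terms in $\mathcal{R}^{m+\frac12}$ via the truncation estimates \eqref{eq:truncation-1}--\eqref{eq:truncation-2}.

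Next I would treat the term $\iprd{\eA^{\mu,m+\frac12}}{\dtau\eh^{\phi,m+\frac12}}$: since $\eA^{\mu,m+\frac12}=\mu^{m+\frac12}-R_h\mu^{m+\frac12}$, integrating by parts is not directly available, but $\dtau\eh^{\phi,m+\frac12}$ has zero mean, so one may subtract the mean of $\eA^{\mu,m+\frac12}$ freely and apply Lemma~\ref{lem-technical-L2-minus1} to get $\le C\norm{\nabla\eA^{\mu,m+\frac12}}{L^2}\norm{\dtau\eh^{\phi,m+\frac12}}{-1,h}$; the standard Ritz-projection estimate $\norm{\nabla\eA^{\mu,m+\frac12}}{L^2}\le Ch^q\abs{\mu^{m+\frac12}}{H^{q+1}}$ then yields the $h^{2q}\abs{\mu^{m+\frac12}}{H^{q+1}}^2$ term in $\mathcal{R}^{m+\frac12}$. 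The term $\varepsilon\,\aiprd{\sigma_3^{m+\frac12}}{\dtau\eh^{\phi,m+\frac12}}$ requires integrating by parts twice: $\aiprd{\sigma_3}{\dtau\eh^{\phi,m+\frac12}}=-\iprd{\Delta\sigma_3}{\dtau\eh^{\phi,m+\frac12}}$ (the boundary term vanishes by the homogeneous Neumann condition since the scheme is consistent with it), and then since $-\Delta\sigma_3$ has a gradient controlled by $\norm{\nabla\Delta\sigma_3}{L^2}$, apply Lemma~\ref{lem-technical-L2-minus1} once more to bound this by $C\norm{\nabla\Delta\sigma_3}{L^2}\norm{\dtau\eh^{\phi,m+\frac12}}{-1,h}$, using the truncation estimate \eqref{eq:truncation-3} to produce the $\tau^3\int\norm{\nabla\Delta\partial_{ss}\phi}{L^2}^2$ contribution. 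The $\gamma_m\varepsilon\tau^2$ term $\frac{\gamma_m\varepsilon\tau^2}{4}\aiprd{\ddtau\phi^m}{\dtau\eh^{\phi,m+\frac12}}$ is handled identically, integrating by parts twice and invoking \eqref{eq:truncation-5} to produce the remaining $\gamma_m\tau^3\int_{t_{m-1}}^{t_m}\norm{\nabla\Delta\partial_{ss}\phi}{L^2}^2$ and companion terms.

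Finally, the term $\frac{1}{\varepsilon}\iprd{\sigma_4^{m+\frac12}+\sigma_5^{m+\frac12}+\sigma_6^{m+\frac12}}{\dtau\eh^{\phi,m+\frac12}}$ is where the previously established Lemmas~\ref{lem-sigma4-estimate}, \ref{lem-sigma5-estimate}, and \ref{lem-sigma6-estimate} are used directly: applying Lemma~\ref{lem-technical-L2-minus1} to each summand gives $\le C(\norm{\nabla\sigma_4^{m+\frac12}}{L^2}+\norm{\nabla\sigma_5^{m+\frac12}}{L^2}+\norm{\nabla\sigma_6^{m+\frac12}}{L^2})\norm{\dtau\eh^{\phi,m+\frac12}}{-1,h}$, and then Young's inequality together with the three lemmas produces the $\tau^3\int\norm{\partial_{ss}\phi^2}{H^1}^2$, $\tau^3\int\norm{\nabla\partial_{ss}\phi}{L^2}^2$ terms as well as the crucial $\norm{\nabla\e^{\phi,m+1}}{L^2}^2$, $\norm{\nabla\e^{\phi,m}}{L^2}^2$, $\gamma_m\norm{\nabla\e^{\phi,m-1}}{L^2}^2$ terms coming from $\sigma_5$ and $\sigma_6$. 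Since $\e^{\phi,k}=\eA^{\phi,k}+\eh^{\phi,k}$, the triangle inequality converts $\norm{\nabla\e^{\phi,k}}{L^2}^2\le C\norm{\nabla\eh^{\phi,k}}{L^2}^2+Ch^{2q}\abs{\phi^k}{H^{q+1}}^2$, which is precisely the form needed for the right side of \eqref{eq:right-hand-side-estimate}; the $\delta_{0,m}h^{2q}\abs{\phi_0}{H^{q+1}}^2$ contribution from Lemma~\ref{lem-sigma6-estimate} slots in as well. Collecting all constants (all depending only on $\varepsilon$, on $\alpha$ through the Young's-inequality splittings, and on $T$ through the regularity bounds, but not on $h$ or $\tau$) and choosing the Young's-inequality weights so that the total coefficient of $\norm{\dtau\eh^{\phi,m+\frac12}}{-1,h}^2$ equals the prescribed $\alpha$ completes the estimate. \textbf{The main obstacle} is bookkeeping: one must verify carefully that every term paired with $\dtau\eh^{\phi,m+\frac12}$ can legitimately be handled via Lemma~\ref{lem-technical-L2-minus1} (which requires the $H^1$-regularity of the other factor and the zero-mean property of $\dtau\eh^{\phi,m+\frac12}$), in particular that $\sigma_3$ and $\ddtau\phi^m$ must be integrated by parts \emph{twice} to expose $\nabla\Delta$ of a quantity controlled by the regularity \eqref{eq:higher-regularities} --- a single integration by parts would leave $\norm{\nabla\sigma_3}{L^2}$ paired against $\norm{\dtau\eh^{\phi,m+\frac12}}{L^2}$, which is not controllable --- and that the $\gamma_m$-weighted terms genuinely vanish at $m=0$ so that the initialization error equation \eqref{eq:error-b-initial} is consistent with the general formula.
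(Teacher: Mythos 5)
Your overall strategy coincides with the paper's proof: every term on the right of \eqref{eq:error-eq} that is paired with $\dtau \eh^{\phi,m+\frac12}$ is estimated via Lemma~\ref{lem-technical-L2-minus1} (exploiting that $\dtau\eh^{\phi,m+\frac12}\in\Soh$ by mass conservation of the Ritz projection and of the scheme), $\sigma_3^{m+\hf}$ and $\ddtau\phi^m$ are integrated by parts twice to expose $\nabla\Delta\partial_{ss}\phi$, Lemmas~\ref{lem-sigma4-estimate}--\ref{lem-sigma6-estimate} supply the $\sigma_4,\sigma_5,\sigma_6$ bounds, the relation $\e^{\phi,k}=\eA^{\phi,k}+\eh^{\phi,k}$ converts the full errors into $\eh$-errors plus $h^{2q}$ seminorm terms, and the Young weights are tuned so that the total coefficient of $\norm{\dtau\eh^{\phi,m+\frac12}}{-1,h}^2$ is $\alpha$.

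The one place where your plan goes wrong is the term $\iprd{\sigma_1^{m+\frac12}+\sigma_2^{m+\frac12}}{\eh^{\mu,m+\frac12}}$. You assert that $\sigma_1$ and $\sigma_2$ ``need not have zero mean,'' but in fact $\iprd{\sigma_1^{m+\frac12}+\sigma_2^{m+\frac12}}{1}=0$: the Ritz projection \eqref{eq:Ritz-projection} preserves means, so $\iprd{\sigma_1^{m+\frac12}}{1}=0$, and mass conservation of the PDE gives $\iprd{\dtau\phi^{m+\frac12}}{1}=\iprd{\partial_t\phi^{m+\frac12}}{1}=0$, hence $\iprd{\sigma_2^{m+\frac12}}{1}=0$. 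This is precisely the observation the paper exploits: one writes $\iprd{\sigma_1^{m+\frac12}+\sigma_2^{m+\frac12}}{\eh^{\mu,m+\frac12}}=\iprd{\sigma_1^{m+\frac12}+\sigma_2^{m+\frac12}}{\eh^{\mu,m+\frac12}-\overline{\eh^{\mu,m+\hf}}}$ with $\overline{\eh^{\mu,m+\hf}}$ the spatial mean, applies Cauchy--Schwarz and Poincar\'{e} to the mean-zero factor, and absorbs $\frac{\varepsilon}{2}\norm{\nabla\eh^{\mu,m+\frac12}}{L^2}^2$ into the left-hand side. Both of your proposed substitutes are problematic: controlling $\norm{\eh^{\mu,m+\frac12}}{L^2}$ by $\norm{\nabla\eh^{\mu,m+\frac12}}{L^2}$ ``plus lower-order contributions'' by testing \eqref{eq:error-b} with $\eh^{\mu,m+\frac12}$ reintroduces the nonlinear difference $\chi\left(\phi^{m+1},\phi^m\right)-\chi\left(\phih^{m+1},\phih^m\right)$ and couples circularly back to the quantities being estimated, while the ``alternative'' of invoking Lemma~\ref{lem-technical-L2-minus1} with $\sigma_i$ in the $\Soh$ slot fails because $\sigma_1^{m+\frac12},\sigma_2^{m+\frac12}\notin S_h$, and putting $\sigma_i$ in the $H^1$ slot would require a bound on $\norm{\nabla\sigma_i^{m+\frac12}}{L^2}$ that Lemma~\ref{lem-truncation-errors} does not provide. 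The zero-mean observation is the missing idea; once it is in place, the remainder of your argument goes through as in the paper.
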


	\begin{proof}
Define, for $0\le m\le M-1$, time-dependent spatial mass average
	\begin{equation}
\overline{\e_h^{\mu,m+\hf}} := |\Omega|^{-1}\iprd{\e_h^{\mu,m+\hf}}{1}.
	\end{equation}
Using the Cauchy-Schwarz inequality, the Poincar\'{e} inequality, with the fact that 
	\[
\iprd{\sigma^{m+\frac12}_1 + \sigma^{m+\frac12}_2}{1} = 0,
	\]
and the local truncation error estimates~\eqref{eq:truncation-1} and \eqref{eq:truncation-2}, we get the following estimate:
	\begin{align}
\left|\iprd{\sigma^{m+\frac12}_1 + \sigma^{m+\frac12}_2}{\eh^{\mu,m+\frac12}}\right| =& \left|\iprd{\sigma^{m+\frac12}_1 + \sigma^{m+\frac12}_2}{\eh^{\mu,m+\frac12}-\overline{\e_h^{\mu,m+\hf}}}\right|
	\nonumber
	\\
\le & \, \norm{\sigma^{m+\frac12}_1 + \sigma^{m+\frac12}_2}{L^2} \norm{\eh^{\mu,m+\frac12}-\overline{\e_h^{\mu,m+\hf}}}{L^2}
	\nonumber
	\\
\le& \, C \norm{\sigma^{m+\frac12}_1 + \sigma^{m+\frac12}_2}{L^2} \norm{\nabla \eh^{\mu,m+\frac12}}{L^2}
	\nonumber
	\\
\le& \, C\norm{\sigma^{m+\frac12}_1}{L^2}^2 + C\norm{\sigma^{m+\frac12}_2}{L^2}^2 + \frac{\varepsilon}{2} \norm{\nabla \eh^{\mu,m+\frac12}}{L^2}^2
	\nonumber
	\\
\le& \, C  \frac{h^{2q+2}}{\tau} \int_{t_m}^{t_{m+1}} \norm{\partial_s\phi(s)}{H^{q+1}}^2  ds
	\nonumber
	\\
& + C\frac{\tau^3}{640} \int_{t_m}^{t_{m+1}}\norm{\partial_{sss}\phi(s)}{L^2}^2 ds   + \frac{\varepsilon}{2} \norm{\nabla \eh^{\mu,m+\frac12}}{L^2}^2.
	\label{eq:error-estimate-1}
	\end{align}
Standard finite element approximation theory shows that
	\begin{equation*}
\norm{\nabla \eA^{\mu,m+\frac12}}{L^2} = \norm{\nabla \left(R_h \mu^{m+\frac12} - \mu^{m+\frac12}\right)}{L^2} \leq C h^q\left| \mu^{m+\frac12} \right|_{H^{q+1}} .
	\end{equation*}
Applying Lemma~\ref{lem-technical-L2-minus1} and the last estimate, we have
	\begin{align}
\left|\iprd{\eA^{\mu,m+\frac12}}{\dtau \eh^{\phi,m+\frac12}}\right| \le& \, C \norm{\nabla\eA^{\mu,m+\frac12}}{L^2} \, \norm{\dtau \eh^{\phi,m+\frac12}}{-1,h} 
	\nonumber
	\\
\le& \, C h^q\left| \mu^{m+\frac12} \right|_{H^{q+1}} \norm{\dtau \eh^{\phi,m+\frac12}}{-1,h}
	\nonumber
	\\
\le& \, Ch^{2q}\left| \mu^{m+\frac12} \right|_{H^{q+1}}^2 + \frac{\alpha}{6} \norm{\dtau \eh^{\phi,m+\frac12}}{-1,h}^2 .
	\label{eq:error-estimate-2}
	\end{align}
Using Lemma~\ref{lem-technical-L2-minus1} and estimate \eqref{eq:truncation-3}, we find
	\begin{align}
\varepsilon \, \aiprd{\sigma_3^{m+\frac12}}{\dtau \eh^{\phi,m+\frac12}} &= -\varepsilon \, \iprd{\Delta \sigma_3^{m+\hf}}{\dtau \eh^{\phi,m+\frac12}}
	\nonumber
	\\
&\le C \norm{\nabla \Delta \sigma_3^{m+\hf}}{L^2} \norm{\dtau \eh^{\phi,m+\frac12}}{-1,h}
	\nonumber
	\\
&\le C \, \frac{\tau^3}{96} \int_{t_m}^{t_{m+1}} \norm{\nabla \Delta \partial_{ss} \phi(s)}{L^2}^2 ds + \frac{\alpha}{6} \norm{\dtau \eh^{\phi,m+\frac12}}{-1,h}^2.
	\nonumber
	\\
& \mbox{}
	\label{eq:error-estimate-3}
	\end{align}
	
Now, using Lemmas~\ref{lem-sigma4-estimate} and \ref{lem-technical-L2-minus1}, we obtain
	\begin{align}
\varepsilon^{-1}\left|\iprd{\sigma_4^{m+\hf}}{\dtau \eh^{\phi,m+\frac12}}\right| \le& \, C \norm{\nabla \sigma_4^{m+\hf}}{L^2} \, \norm{\dtau \eh^{\phi,m+\frac12}}{-1,h}
	\nonumber
	\\
\le& \, C \norm{\nabla \sigma_4^{m+\hf}}{L^2}^2 +  \frac{\alpha}{6}  \norm{\dtau \eh^{\phi,m+\frac12}}{-1,h}^2
	\nonumber
	\\
\le& \, C\tau^3 \int_{t_m}^{t_{m+1}} \norm{\nabla \partial_{ss} \phi(s)}{L^2}^2 ds 
	\nonumber
	\\
& + C \tau^3 \int_{t_m}^{t_{m+1}} \norm{\partial_{ss} \phi^2(s)}{H^1}^2 ds + \frac{\alpha}{6}  \norm{\dtau \eh^{\phi,m+\frac12}}{-1,h}^2.
	\label{eq:error-estimate-4}
	\end{align}
Similarly, using Lemmas~\ref{lem-sigma5-estimate} and \ref{lem-technical-L2-minus1}, the relation $\e^{\phi,m+1} =\eA^{\phi,m+1}+\eh^{\phi,m+1}$, and a standard finite element error estimate, we arrive at
	\begin{align}
\varepsilon^{-1}\left|\iprd{\sigma_5^{m+\hf}}{\dtau \eh^{\phi,m+\frac12}}\right| \le& \, C \norm{\nabla \sigma_5^{m+\hf}}{L^2}^2 +  \frac{\alpha}{6}  \norm{\dtau \eh^{\phi,m+\frac12}}{-1,h}^2
	\nonumber
	\\
\le& \, C \norm{\nabla \e^{\phi,m+1}}{L^2}^2 + C \norm{\nabla \e^{\phi,m}}{L^2}^2 + \frac{\alpha}{6}  \norm{\dtau \eh^{\phi,m+\frac12}}{-1,h}^2
	\nonumber
	\\
\le& \, C \norm{\nabla \eA^{\phi,m+1}}{L^2}^2 +C \norm{\nabla \eh^{\phi,m+1}}{L^2}^2 + C \norm{\nabla \eA^{\phi,m}}{L^2}^2 
	\nonumber
	\\
&+C \norm{\nabla \eh^{\phi,m}}{L^2}^2 + \frac{\alpha}{6}  \norm{\dtau \eh^{\phi,m+\frac12}}{-1,h}^2 
	\nonumber
	\\
\le& \, C h^{2q}\left| \phi^{m+1} \right|_{H^{q+1}}^2 + C \norm{\nabla \eh^{\phi,m+1}}{L^2}^2 + C h^{2q}\left| \phi^{m} \right|_{H^{q+1}}^2 
	\nonumber
	\\
&+ C \norm{\nabla \eh^{\phi,m}}{L^2}^2 + \frac{\alpha}{6}  \norm{\dtau \eh^{\phi,m+\frac12}}{-1,h}^2.
	\label{eq:error-estimate-5}
	\end{align}
Applying Lemmas~\ref{lem-sigma6-estimate} and \ref{lem-technical-L2-minus1}, the relation $\e^{\phi,m+1} =\eA^{\phi,m+1}+\eh^{\phi,m+1}$, and a standard finite element error estimate,
	\begin{align}
\varepsilon^{-1}\left|\iprd{\sigma_6^{m+\hf}}{\dtau \eh^{\phi,m+\frac12}}\right| \le& \, C \norm{\nabla \sigma_6^{m+\hf}}{L^2}^2 +  \frac{\alpha}{6}  \norm{\dtau \eh^{\phi,m+\frac12}}{-1,h}^2
	\nonumber
	\\
\le & \, C \tau^3  \left( \gamma_m\int_{t_{m-1}}^{t_m}\norm{\nabla\partial_{ss}\phi(s)}{L^2}^2 \, ds + \int_{t_m}^{t_{m+1}}\norm{\nabla\partial_{ss}\phi(s)}{L^2}^2 \, ds \right) 
	\nonumber
	\\
& + C\norm{\nabla \e_h^{\phi,m}}{L^2}^2+C\gamma_m\norm{\nabla \e_h^{\phi,m-1}}{L^2}^2 
	\nonumber
	\\
& + C h^{2q}\left| \phi^m \right|_{H^{q+1}}^2 + C\gamma_m h^{2q}\left| \phi^{m-1} \right|_{H^{q+1}}^2 + \frac{\alpha}{6}  \norm{\dtau \eh^{\phi,m+\frac12}}{-1,h}^2.
	\label{eq:error-estimate-6}
	\end{align}
	
To finish up, using \eqref{eq:truncation-4},
	\begin{align}
\frac{\gamma_m\varepsilon\tau^2}{4} \, \aiprd{\ddtau\phi^m}{\dtau \eh^{\phi, m+\frac12}}  \le &\, C \gamma_m\frac{\tau^3}{3} \int_{t_{m-1}}^{t_m} \norm{\nabla \Delta \partial_{ss} \phi(s)}{L^2}^2 ds + \frac{\alpha}{6} \norm{\dtau \eh^{\phi,m+\frac12}}{-1,h}^2 .
	\label{eq:error-estimate-7}
	\end{align}

Combining the estimates \eqref{eq:error-estimate-1} -- \eqref{eq:error-estimate-7} with the error equation \eqref{eq:error-eq}, the result follows.
	\end{proof}

	\begin{lem}
	\label{lem-1,h-error-estimate}
Suppose that $(\phi, \mu)$ is a weak solution to \eqref{eq:weak-error-a} -- \eqref{eq:weak-error-b}, with the additional regularities \eqref{eq:higher-regularities}.  Then, for any $h, \tau >0$, there exists a constant $C>0$, independent of $h$ and $\tau$, such that
	\begin{equation}
\norm{\dtau \eh^{\phi,m+\frac12}}{-1,h}^2 \le 2 \, \varepsilon^2 \norm{\nabla \eh^{\mu,m+\frac12}}{L^2}^2 + C\mathcal{R}^{m+\hf},
 	\label{eq:-1,h-error-estimate}
	\end{equation}
where $\mathcal{R}^{m+\hf}$ is the consistency term given in \eqref{eq:consistency}.
	\end{lem}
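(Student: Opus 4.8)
The plan is to test the first component of the error equation -- namely \eqref{eq:error-a-initial} when $m=0$ and \eqref{eq:error-a} when $1\le m\le M-1$, both of which read $\iprd{\dtau \eh^{\phi,m+\frac12}}{\nu} + \varepsilon \, \aiprd{\eh^{\mu,m+\frac12}}{\nu} = \iprd{\sigma^{m+\frac12}_1 + \sigma^{m+\frac12}_2}{\nu}$ for all $\nu\in S_h$ -- against the discrete ``Green's function'' of $\dtau \eh^{\phi,m+\frac12}$, and then to read off the constant $2\varepsilon^2$ from a split-square Young inequality.

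First I would check that $\dtau \eh^{\phi,m+\frac12}\in\Soh$: taking $\nu\equiv 1$ in the error equation and using $\aiprd{\,\cdot\,}{1}=0$ together with $\iprd{\sigma^{m+\frac12}_1 + \sigma^{m+\frac12}_2}{1}=0$ (already noted in the proof of Lemma~\ref{lem-error-rhs-control}) gives $\iprd{\dtau \eh^{\phi,m+\frac12}}{1}=0$, so the discrete negative norm of $\dtau \eh^{\phi,m+\frac12}$ is well defined. Recalling that, for $v\in\Soh$, $\norm{v}{-1,h}=\norm{\nabla\psi}{L^2}$ where $\psi\in\Soh$ is the unique solution of $\aiprd{\psi}{\xi}=\iprd{v}{\xi}$ for all $\xi\in S_h$ (so that also $\iprd{v}{\psi}=\norm{v}{-1,h}^2$), I would let $\psi\in\Soh$ be this function with $v=\dtau \eh^{\phi,m+\frac12}$.

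Setting $\nu=\psi$ in the error equation, the first term is exactly $\norm{\dtau \eh^{\phi,m+\frac12}}{-1,h}^2$. The second term is handled by Cauchy--Schwarz, $\varepsilon\,\aiprd{\eh^{\mu,m+\frac12}}{\psi}=\varepsilon\iprd{\nabla\eh^{\mu,m+\frac12}}{\nabla\psi}\le \varepsilon\norm{\nabla\eh^{\mu,m+\frac12}}{L^2}\norm{\dtau \eh^{\phi,m+\frac12}}{-1,h}$. For the right-hand side, since $\psi\in\Soh$ has zero mean the Poincar\'e--Wirtinger inequality applies, giving $\iprd{\sigma^{m+\frac12}_1 + \sigma^{m+\frac12}_2}{\psi}\le \norm{\sigma^{m+\frac12}_1 + \sigma^{m+\frac12}_2}{L^2}\norm{\psi}{L^2}\le C\norm{\sigma^{m+\frac12}_1 + \sigma^{m+\frac12}_2}{L^2}\norm{\dtau \eh^{\phi,m+\frac12}}{-1,h}$. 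Combining these three bounds and dividing by $\norm{\dtau \eh^{\phi,m+\frac12}}{-1,h}$ (the estimate being trivial when this vanishes) yields $\norm{\dtau \eh^{\phi,m+\frac12}}{-1,h}\le \varepsilon\norm{\nabla\eh^{\mu,m+\frac12}}{L^2}+C\norm{\sigma^{m+\frac12}_1 + \sigma^{m+\frac12}_2}{L^2}$.

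Finally I would square, use $(a+b)^2\le 2a^2+2b^2$ to obtain $\norm{\dtau \eh^{\phi,m+\frac12}}{-1,h}^2\le 2\varepsilon^2\norm{\nabla\eh^{\mu,m+\frac12}}{L^2}^2+C\bigl(\norm{\sigma^{m+\frac12}_1}{L^2}^2+\norm{\sigma^{m+\frac12}_2}{L^2}^2\bigr)$, and bound the two truncation terms by \eqref{eq:truncation-1} and \eqref{eq:truncation-2}, which are (up to constants) two of the summands of $\mathcal{R}^{m+\hf}$ in \eqref{eq:consistency}; this gives the claim. There is no serious analytical obstacle here: the only points requiring care are the zero-mean verification (so that both $\norm{\,\cdot\,}{-1,h}$ and the Poincar\'e--Wirtinger inequality are legitimate) and keeping the coefficient of $\norm{\nabla\eh^{\mu,m+\frac12}}{L^2}^2$ sharp at exactly $2\varepsilon^2$, since this precise constant is what lets the $\dtau \eh^{\phi,m+\frac12}$ term produced by Lemma~\ref{lem-error-rhs-control} be absorbed into the dissipation by choosing the parameter $\alpha$ sufficiently small.
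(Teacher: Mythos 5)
Your proposal is correct and follows essentially the same route as the paper: the paper also tests \eqref{eq:error-a-initial}/\eqref{eq:error-a} with $\mathsf{T}_h\left(\dtau \eh^{\phi,m+\frac12}\right)$ (your discrete Green's function $\psi$), bounds the $\mu$-term by $\varepsilon \norm{\nabla \eh^{\mu,m+\frac12}}{L^2} \norm{\dtau \eh^{\phi,m+\frac12}}{-1,h}$, controls the truncation term via $\norm{\mathsf{T}_h(\cdot)}{L^2}\le C\norm{\dtau \eh^{\phi,m+\frac12}}{-1,h}$, and lands on the coefficient $2\varepsilon^2$ after absorbing $\frac12\norm{\dtau \eh^{\phi,m+\frac12}}{-1,h}^2$ into the left side. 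Your divide-then-square variant and your explicit zero-mean verification are only cosmetic differences from the paper's Young-inequality absorption.
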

	
	\begin{proof}
Setting $\nu = \mathsf{T}_h\left(\dtau \eh^{\phi,\frac12} \right)$ in \eqref{eq:error-a-initial} and $\nu = \mathsf{T}_h\left(\dtau \eh^{\phi,m+\frac12} \right)$ in \eqref{eq:error-a} and combining, we have
	\begin{align}
\norm{\dtau \eh^{\phi,m+\frac12}}{-1,h}^2  =& \, - \varepsilon\, \aiprd{\eh^{\mu,m+\frac12}}{\mathsf{T}_h\left(\dtau \eh^{\phi,m+\frac12} \right)} + \iprd{\sigma^{m+\frac12}_1 + \sigma^{m+\frac12}_2}{\mathsf{T}_h\left(\dtau \eh^{\phi,m+\frac12} \right)} 
	\nonumber
	\\
=& \, -\varepsilon \iprd{\eh^{\mu,m+\frac12}}{\dtau \eh^{\phi,m+\frac12}} + \iprd{\sigma^{m+\frac12}_1 + \sigma^{m+\frac12}_2}{\mathsf{T}_h\left(\dtau \eh^{\phi,m+\frac12} \right)} 
	\nonumber
	\\
\le& \, \varepsilon \norm{\nabla \eh^{\mu,m+\frac12}}{L^2} \norm{\dtau \eh^{\phi,m+\frac12}}{-1,h} + \norm{\sigma^{m+\frac12}_1 + \sigma^{m+\frac12}_2}{L^2} \norm{\mathsf{T}_h\left(\dtau \eh^{\phi,m+\frac12} \right)}{L^2}
	\nonumber
	\\
\le& \, \varepsilon^2 \norm{\nabla \eh^{\mu,m+\frac12}}{L^2}^2 + \frac{1}{4} \norm{\dtau \eh^{\phi,m+\frac12}}{-1,h}^2 
	\nonumber
	\\
& + C \norm{\sigma^{m+\frac12}_2 + \sigma^{m+\frac12}_1}{L^2}^2 + \frac{1}{4} \norm{\dtau \eh^{\phi, m+\frac12}}{-1,h}^2
	\nonumber
	\\
\le& \, \varepsilon^2 \norm{\nabla \eh^{\mu,m+\frac12}}{L^2}^2 + \frac12 \norm{\dtau \eh^{\phi,m+\frac12}}{-1,h}^2 + C\mathcal{R}^{m+\hf} ,
	\end{align}
for $0 \le m \le M-1$ and where we have used Lemma~\ref{lem-truncation-errors}. The result now follows.
	\end{proof}

	\begin{lem}
	\label{lem-error-rhs-eh-control}
Suppose that $(\phi, \mu)$ is a weak solution to \eqref{eq:weak-error-a} -- \eqref{eq:weak-error-b}, with the additional regularities \eqref{eq:higher-regularities}.  Then, for any $h$, $\tau >0$, there exists a constant $C>0$, independent of $h$ and $\tau$, but possibly dependent upon $T$, such that 
	\begin{align}
&\frac{\varepsilon}{2} \, \aiprd{\eh^{\phi,m+1} + \eh^{\phi,m}}{\dtau \eh^{\phi, m+\frac12}} + \frac{\gamma_m\tau^2\varepsilon}{4} \, \aiprd{\ddtau \eh^{\phi,m} }{\dtau \eh^{\phi, m+\frac12}} + \frac{\varepsilon}{4} \, \norm{\nabla \eh^{\mu, m+\frac12}}{L^2}^2 
	\nonumber
	\\
&\quad\le \, C \norm{\nabla \eh^{\phi, m+1}}{L^2}^2 + C \norm{\nabla \eh^{\phi, m}}{L^2}^2 + \gamma_mC \norm{\nabla \eh^{\phi, m-1}}{L^2}^2 + C\mathcal{R}^{m+1} .
	\end{align}
	\end{lem}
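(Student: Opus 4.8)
\emph{Proof proposal.} The plan is simply to feed the negative-norm control of Lemma~\ref{lem-1,h-error-estimate} into the estimate of Lemma~\ref{lem-error-rhs-control}, and then to absorb the resulting $\norm{\nabla \eh^{\mu,m+\frac12}}{L^2}^2$ contribution into the dissipation term already present on the left-hand side. Since $\varepsilon>0$ is held fixed throughout the analysis, there is ample room to do this.

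Concretely, I would first invoke Lemma~\ref{lem-error-rhs-control}, leaving the free parameter $\alpha>0$ unspecified for the moment, which gives for $0\le m\le M-1$
\begin{align*}
&\frac{\varepsilon}{2} \, \aiprd{\eh^{\phi,m+1} + \eh^{\phi,m}}{\dtau \eh^{\phi, m+\frac12}} + \frac{\gamma_m\varepsilon\tau^2}{4} \, \aiprd{\ddtau\eh^{\phi,m}}{\dtau \eh^{\phi, m+\frac12}} + \frac{\varepsilon}{2} \, \norm{\nabla \eh^{\mu, m+\frac12}}{L^2}^2
	\\
&\quad\le \, C \norm{\nabla \eh^{\phi,m+1}}{L^2}^2 + C \norm{\nabla \eh^{\phi,m}}{L^2}^2 + \gamma_m C \norm{\nabla \eh^{\phi,m-1}}{L^2}^2 + \alpha \, \norm{\dtau \eh^{\phi,m+\frac12}}{-1,h}^2 + C \mathcal{R}^{m+\hf}.
\end{align*}
Next I would apply Lemma~\ref{lem-1,h-error-estimate} to bound the lone negative-norm term,
\[
\alpha \, \norm{\dtau \eh^{\phi,m+\frac12}}{-1,h}^2 \le 2\alpha\varepsilon^2 \, \norm{\nabla \eh^{\mu,m+\frac12}}{L^2}^2 + C\alpha \, \mathcal{R}^{m+\hf},
\]
and then fix $\alpha := \frac{1}{8\varepsilon}$, so that $2\alpha\varepsilon^2 = \frac{\varepsilon}{4}$. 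Moving this contribution to the left-hand side leaves the coefficient $\frac{\varepsilon}{2}-\frac{\varepsilon}{4}=\frac{\varepsilon}{4}$ in front of $\norm{\nabla \eh^{\mu,m+\frac12}}{L^2}^2$, while the remaining terms match those in the claim; collecting the two occurrences of the consistency term into a single $C\mathcal{R}^{m+\hf}$ (identified with the $\mathcal{R}^{m+1}$ written in the statement) yields exactly the asserted inequality. The case $m=0$ is handled simultaneously, since both input lemmas are stated uniformly for $0\le m\le M-1$ via the $\gamma_m$ factors.

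This is purely a bookkeeping and absorption step, so I do not expect any genuine obstacle. The only point deserving a moment's attention is that $\alpha$ — and hence every constant appearing in the final bound — may be chosen independently of $h$ and $\tau$; this is immediate because $\varepsilon$ is fixed and because the constants supplied by Lemmas~\ref{lem-error-rhs-control} and~\ref{lem-1,h-error-estimate} are already independent of $h$ and $\tau$, with possible dependence on $T$ only through the regularity norms embedded in $\mathcal{R}^{m+\hf}$.
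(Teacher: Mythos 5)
Your proposal is correct and is precisely the argument the paper intends: the paper's proof is the one-line remark that the result ``follows upon combining the last two lemmas and choosing $\alpha$ in \eqref{eq:right-hand-side-estimate} appropriately,'' and your choice $\alpha=\frac{1}{8\varepsilon}$ (so $2\alpha\varepsilon^2=\frac{\varepsilon}{4}$ is absorbed into the $\frac{\varepsilon}{2}\norm{\nabla\eh^{\mu,m+\frac12}}{L^2}^2$ term) carries out exactly that absorption. Your reading of the statement's $\mathcal{R}^{m+1}$ as the consistency term $\mathcal{R}^{m+\hf}$ of \eqref{eq:consistency} is also the right one.
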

	\begin{proof}
This follows upon combining the last two lemmas and choosing $\alpha$ in \eqref{eq:right-hand-side-estimate} appropriately.
	\end{proof}

Using the last lemma, we are ready to show the main convergence result for our second-order convex-splitting scheme.
	\begin{thm}
	\label{thm-error-estimate}
Suppose $(\phi, \mu)$ is a weak solution to \eqref{eq:weak-error-a} -- \eqref{eq:weak-error-b}, with the additional regularities \eqref{eq:higher-regularities}.  Then, provided $0<\tau <\tau_0$, for some $\tau_0$ sufficiently small,
	\begin{align}
\max_{0 \le m \le M-1} \norm{\nabla\eh^{\phi,m+1}}{L^2}^2 + \tau\sum_{m=0}^{M-1} \norm{\nabla\eh^{\mu,m+\frac12}}{L^2}^2 &\le C(T)(\tau^4+h^{2q}) 
	\end{align}
for some $C(T)>0$ that is independent of $\tau$ and $h$.
	\end{thm}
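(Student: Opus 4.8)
The strategy is to sum the pointwise estimate of Lemma~\ref{lem-error-rhs-eh-control} over the time index, recognize the left-hand side as a telescoping sequence, and close with a discrete Gronwall argument. First I would record that, since $\phih^0 := R_h\phi_0 = R_h\phi^0$, the initial error vanishes identically, $\eh^{\phi,0} = 0$. For a fixed $0\le\ell\le M-1$, apply $\tau\sum_{m=0}^{\ell}$ to the inequality of Lemma~\ref{lem-error-rhs-eh-control}. On the left, note that $\frac12\left(\eh^{\phi,m+1}+\eh^{\phi,m}\right) + \frac{\gamma_m\tau^2}{4}\ddtau\eh^{\phi,m}$ equals $\frac12\eh^{\phi,1}+\frac12\eh^{\phi,0}$ when $m=0$ and $\frac34\eh^{\phi,m+1}+\frac14\eh^{\phi,m-1}$ when $m\ge1$, so the first two terms combine into $\varepsilon\aiprd{\frac34\eh^{\phi,m+1}+\frac14\eh^{\phi,m-1}}{\dtau\eh^{\phi,m+\frac12}}$ (with the obvious modification at $m=0$); the identity \eqref{eq:identity-check}, applied with $\eh^{\phi,\cdot}$ in place of $\phih^{\cdot}$, then makes this telescope. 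Using $\eh^{\phi,0}=0$ and discarding the nonnegative second-difference terms, the summed left-hand side is bounded below by
\[
\frac{\varepsilon}{2}\norm{\nabla\eh^{\phi,\ell+1}}{L^2}^2 + \frac{\varepsilon}{4}\tau\sum_{m=0}^{\ell}\norm{\nabla\eh^{\mu,m+\frac12}}{L^2}^2 - \frac{\varepsilon}{8}\norm{\nabla\eh^{\phi,1}}{L^2}^2 ,
\]
while the right-hand side is $C\tau\sum_{m=0}^{\ell}\bigl(\norm{\nabla\eh^{\phi,m+1}}{L^2}^2+\norm{\nabla\eh^{\phi,m}}{L^2}^2+\gamma_m\norm{\nabla\eh^{\phi,m-1}}{L^2}^2\bigr) + C\tau\sum_{m=0}^{\ell}\mathcal{R}^{m+\hf}$.

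Next I would dispatch the two ``extra'' pieces. Taking $\ell=0$ in the inequality just obtained and using $\eh^{\phi,0}=0$ gives $\left(\tfrac{3\varepsilon}{8}-C\tau\right)\norm{\nabla\eh^{\phi,1}}{L^2}^2 \le C\tau\,\mathcal{R}^{\hf}$, so for $\tau<\tau_0$ sufficiently small $\norm{\nabla\eh^{\phi,1}}{L^2}^2 \le C\tau\,\mathcal{R}^{\hf} \le C(\tau^4+h^{2q})$, after bounding each integral $\int_{t_m}^{t_{m+1}}$ in $\mathcal{R}^{\hf}$ by the corresponding global Bochner norm from \eqref{eq:higher-regularities}. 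The same regularity assumptions give the full consistency bound $\tau\sum_{m=0}^{M-1}\mathcal{R}^{m+\hf}\le C(T)(\tau^4+h^{2q})$: the $h^{2q+2}/\tau$-weighted term collapses to $h^{2q+2}\int_0^T\norm{\partial_s\phi}{H^{q+1}}^2\,ds$, the $h^{2q}$-weighted Ritz terms are controlled using $\phi\in H^1(0,T;H^{q+1})\hookrightarrow C([0,T];H^{q+1})$ and $\mu\in L^2(0,T;H^{q+1})$, and each $\tau^3$-weighted term collapses to $\tau^4$ times a fixed norm of $\partial_{sss}\phi$, $\partial_{ss}\phi^2$, $\nabla\partial_{ss}\phi$, or $\nabla\Delta\partial_{ss}\phi$.

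Finally, combining these facts, absorbing $\tfrac{\varepsilon}{8}\norm{\nabla\eh^{\phi,1}}{L^2}^2$ and the $m=\ell$ term $C\tau\norm{\nabla\eh^{\phi,\ell+1}}{L^2}^2$ from the right into the left (again using $\tau<\tau_0$), and reindexing the remaining error sums, one arrives, for every $0\le\ell\le M-1$, at
\[
\norm{\nabla\eh^{\phi,\ell+1}}{L^2}^2 + \tau\sum_{m=0}^{\ell}\norm{\nabla\eh^{\mu,m+\frac12}}{L^2}^2 \le C(T)(\tau^4+h^{2q}) + C\tau\sum_{m=0}^{\ell-1}\norm{\nabla\eh^{\phi,m+1}}{L^2}^2 ,
\]
and the discrete Gronwall inequality yields the claimed estimate, the constant picking up a factor $e^{CT}$. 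The main obstacle is bookkeeping rather than conceptual: one must track the telescoped boundary contributions carefully --- in particular the $-\tfrac{\varepsilon}{8}\norm{\nabla\eh^{\phi,1}}{L^2}^2$ term generated by the second-order difference operator --- and the restriction $\tau<\tau_0$ is genuinely needed, since the convex-splitting treatment of the nonlinearity leaves a term $C\tau\norm{\nabla\eh^{\phi,\ell+1}}{L^2}^2$ on the right that can only be absorbed on the left when $\tau$ is small relative to $\varepsilon$. All of the delicate nonlinear analysis --- controlling $\sigma_5^{m+\hf}$ \emph{linearly} in the error --- has already been carried out through the unconditional $L^\infty$ stabilities established in Section~\ref{sec:defn-and-properties}.
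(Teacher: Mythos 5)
Your proposal is correct and follows essentially the same route as the paper: sum the estimate of Lemma~\ref{lem-error-rhs-eh-control}, telescope the left-hand side via the identity \eqref{eq:identity-check} applied to the errors (using $\eh^{\phi,0}=0$ and discarding the nonnegative second-difference contribution), control $\norm{\nabla\eh^{\phi,1}}{L^2}^2$ separately from the $m=0$ step, bound $\tau\sum_m \mathcal{R}^{m+\hf}$ by $C(T)(\tau^4+h^{2q})$ using the assumed regularities, absorb the $C\tau\norm{\nabla\eh^{\phi,\ell+1}}{L^2}^2$ term for $\tau<\tau_0$, and close with the discrete Gronwall inequality. The only cosmetic difference is that the paper first records the per-step telescoped inequality \eqref{eq:error-sum} and then applies $\tau\sum_{m=0}^{\ell}$, whereas you sum directly; the boundary term $-\frac{\varepsilon}{8}\norm{\nabla\eh^{\phi,1}}{L^2}^2$ you track corresponds exactly to the $\frac14\norm{\nabla\eh^{\phi,1}}{L^2}^2$ appearing on the right of the paper's \eqref{eq:estimate-before-gronwall}.
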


	\begin{proof}
Using Lemma~\ref{lem-error-rhs-eh-control}, we have
	\begin{align}
\frac{1}{2 \tau} \left( \norm{\nabla \eh^{\phi,m+1}}{L^2}^2 - \norm{\nabla \eh^{\phi,m}}{L^2}^2 \right) +  \frac14 \norm{\nabla\eh^{\mu,m+\frac12}}{L^2}^2 \hspace{0.6in}&
	\nonumber
	\\
  + \, \frac{\gamma_m}{8 \tau} \left( \norm{\nabla \eh^{\phi,m+1} - \nabla \eh^{\phi,m}}{L^2}^2 - \norm{\nabla \eh^{\phi,m} - \nabla \eh^{\phi,m-1}}{L^2}^2 \right) \le & \, C \norm{\nabla \eh^{\phi, m+1}}{L^2}^2 + C \norm{\nabla \eh^{\phi, m}}{L^2}^2  
	\nonumber
	\\
& + \gamma_m C \norm{\nabla \eh^{\phi, m-1}}{L^2}^2 + C\mathcal{R}^{m+\hf}.
	\nonumber
	\\
& \mbox{}
	\label{eq:error-sum}
	\end{align}
Letting $m=0$ in the previous equation and noting that $\eh^{\phi,0} \equiv 0$ and $\gamma_0 = 0$, then
	\begin{align}
\frac{1}{2 \tau} \norm{\nabla \eh^{\phi,1}}{L^2}^2 + \frac14 \norm{\nabla\eh^{\mu,\frac12}}{L^2}^2 \le  \, C_1 \norm{\nabla \eh^{\phi, 1}}{L^2}^2 + C\mathcal{R}^{\hf} .
	\end{align}
If $0< \tau \le \tau_0:= \frac{1}{2C_1} < \frac{1}{C_1}$, it follows from the last estimate that
	\begin{align}
	\norm{\nabla \eh^{\phi,1}}{L^2}^2 + \frac{\tau}{2} \norm{\nabla\eh^{\mu,\frac12}}{L^2}^2 \le \tau \, C \mathcal{R}^{\hf} \le C(\tau^4+h^{2q}),
	\label{eq:error-estimate-initial}
	\end{align}
where we have used the regularity assumptions to conclude $\tau \, C \mathcal{R}^{\hf}\le C(\tau^4+h^{2q})$. Now, applying $\tau\sum_{m=0}^\ell$ to \eqref{eq:error-sum}, 
	\begin{align}
\norm{\nabla\eh^{\phi,\ell+1}}{L^2}^2 + \frac{\tau}{2} \sum_{m=0}^{\ell} \norm{\nabla\eh^{\mu,m+\frac12}}{L^2}^2 &\le  C\tau\sum_{m=0}^\ell \mathcal{R}^{m+\hf} + C_2\tau\sum_{m=0}^\ell\norm{\nabla\eh^{\phi,m+1}}{L^2}^2 + \frac14 \norm{\nabla \eh^{\phi,1}}{L^2}^2.
	\label{eq:estimate-before-gronwall}
	\end{align}
If $0< \tau \le \tau_0:= \frac{1}{2C_2} < \frac{1}{C_2}$, it follows from the last estimate that
	\begin{align}
\norm{\nabla\eh^{\phi,\ell+1}}{L^2}^2  \le& \, C\tau\sum_{m=0}^\ell \mathcal{R}^{m+\hf} + \frac{C_2\tau}{1-C_2\tau} \sum_{m=0}^{\ell}\norm{\nabla\eh^{\phi,m}}{L^2}^2 + \frac14 \norm{\nabla \eh^{\phi,1}}{L^2}^2
	\nonumber
	\\
\le& \, C(\tau^4+h^{2q}) + C\tau \sum_{m=0}^{\ell}\norm{\nabla\eh^{\phi,m}}{L^2}^2 ,
	\label{eq:pre-gronwall}
	\end{align}
where we have used \eqref{eq:error-estimate-initial} and the regularity assumptions to conclude $\tau\sum_{m=0}^{M-1} \mathcal{R}^{m+\hf}\le C(\tau^4+h^{2q})$. Appealing to the discrete Gronwall inequality, it follows that, for any $0 < \ell \le M-1$,
	\begin{equation}
\norm{\nabla\eh^{\phi,\ell+1}}{L^2}^2 \le C(T)(\tau^4+h^{2q}).
	\label{eq:post-gronwall}
	\end{equation}

Considering estimates \eqref{eq:error-estimate-initial}, \eqref{eq:estimate-before-gronwall}, and \eqref{eq:post-gronwall} we get the desired result. 
	\end{proof}
	
	\begin{rem}
From here it is straightforward to establish an optimal error estimate of the form
	\begin{align}
\max_{0\le m \le M-1} \norm{\nabla\e^{\phi,\ell+1}}{L^2}^2 + \tau\sum_{m=0}^{M-1} \norm{\nabla\e^{\mu,m+\frac12}}{L^2}^2  \le C(T)(\tau^4+h^{2q}) 
	\end{align}
using $\ephi = \eAphi + \ehphi$, \emph{et cetera}, the triangle inequality, and the standard spatial approximations. We omit the details for the sake of brevity.
	\end{rem}

	\section{Numerical Experiments}
	\label{sec:numerical-experiments}
	
In this section, we provide some numerical experiments to gauge the accuracy and reliability of the fully discrete finite element method developed in the previous sections. We use a square domain $\Omega = (0,1)^2\subset \mathbb{R}^2$ and take ${\mathcal T}_h$ to be a regular triangulation of $\Omega$ consisting of right isosceles triangles. To refine the mesh, we assume that ${\mathcal T}_{\ell}, \ {\ell} = 0, 1, ..., L$, is an hierarchy of nested triangulations of $\Omega$ where ${\mathcal T}_{\ell}$, is obtained by subdividing the triangles of ${\mathcal T}_{\ell -1}$ into four congruent sub-triangles. Note that $h_{\ell -1} = 2h_{\ell}, \ {\ell} = 1, ..., L$, and that $\{{\mathcal T}_{\ell}\}$ is a quasi-uniform family. (We use a family of meshes ${\mathcal T}_h$ such that no triangle in the mesh has more than one edge on the boundary.)  We use the ${\mathcal P}_2$ finite element space for the phase field and chemical potential.  In short, we take $q=2$.

We solve the scheme \eqref{eq:scheme-a} -- \eqref{eq:scheme-b-initial} with $\epsilon = 6.25 \times 10^{-2}$. The initial data for the phase field is taken to be
	\begin{equation}
\phi_{h}^0 = \mathcal{I}_h\left\{ \frac{1}{2}\Big(1.0-\cos(4.0\pi x)\Big)\cdot \Big(1.0-\cos(2.0\pi y)\Big)-1.0\right\} ,
	\end{equation}
where $\mathcal{I}_h :   H^2\left(\Omega\right) \to S_h$ is the standard nodal interpolation operator. Recall that our analysis does not specifically cover the use of the operator $\mathcal{I}_h$ in the initialization step.  But, since the error introduced by its use is optimal, a slight modification of the analysis shows that this will lead to optimal rates of convergence overall.  (See Remark~\ref{rem:initial-projection}.) To solve the system of equations above numerically, we are using the finite element libraries from the FEniCS Project~\cite{fenics12}.  

	\begin{table}[h!]
	\centering
	\begin{tabular}{ccccccccc}
$h_c$ & $h_f$ & $\norm{\delta_\phi}{H^1}$ & rate & $\norm{\delta_\mu}{H^1}$ & rate &
	\\
	\hline
$\nicefrac{\sqrt{2}}{16}$ & $\nicefrac{\sqrt{2}}{32}$ & $1.148\times 10^{-1}$ & -- & $1.307\times 10^{-1}$ & -- & 
	\\
$\nicefrac{\sqrt{2}}{32}$ & $\nicefrac{\sqrt{2}}{64}$ & $2.939\times 10^{-2}$ & 1.95 & $3.299\times 10^{-2}$ & 1.98 & 
	\\
$\nicefrac{\sqrt{2}}{64}$ & $\nicefrac{\sqrt{2}}{128}$ & $7.468\times 10^{-3}$ & 1.97 & $8.295\times 10^{-3}$ & 1.99 & 
	\\
$\nicefrac{\sqrt{2}}{128}$ & $\nicefrac{\sqrt{2}}{256}$ & $1.913\times 10^{-3}$ & 1.95 & $2.087\times 10^{-3}$ & 1.99 & 
	\\
	\hline
	\end{tabular}
\caption{$H^1$ Cauchy convergence test. The final time is $T = 4.0\times 10^{-1}$, and the refinement path is taken to be $\tau = .001\sqrt{2}h$ with $\varepsilon =6.25\times 10^{-2}$.  The Cauchy difference is defined via $\delta_\phi := \phi_{h_f}-\phi_{h_c}$, where the approximations are evaluated at time $t=T$, and analogously for $\delta_\mu$. (See the discussion in the text.) Since $q=2$, \emph{i.e.}, we use ${\mathcal P}_2$ elements for these variables, the norm of the Cauchy difference at $T$ is expected to be $\mathcal{O}(\tau_f^2)+\mathcal{O}\left(h_f^2\right) = \mathcal{O}\left(h_f^2\right)$.}
	\label{tab1}
	\end{table}
	
Note that source terms are not naturally present in the system of equations \eqref{eq:CH-mixed-a-alt} -- \eqref{eq:CH-mixed-c-alt}. Therefore, it is somewhat artificial to add them to the equations in attempt to manufacture exact solutions.  To get around the fact that we do not have possession of exact solutions, we measure error by a different means.  Specifically, we compute the rate at which the Cauchy difference, $\delta_\zeta := \zeta^{M_f}_{h_f} - \zeta^{M_c}_{h_c}$, converges to zero, where $h_f=2h_c$, $\tau_f = 2\tau_c$, and $\tau_fM_f = \tau_cM_c=T$. Then, using a linear refinement path, \emph{i.e.}, $\tau = Ch$, and assuming $q = 2$, we have
	\begin{equation}
\norm{\delta_\zeta}{H^1} = \norm{\zeta^{M_f}_{h_f} - \zeta^{M_c}_{h_c}}{H^1} \le \norm{\zeta^{M_f}_{h_f}-\zeta(T)}{H^1}+ \norm{\zeta^{M_c}_{h_c}-\zeta(T)}{H^1} = \mathcal{O}(h_f^q+\tau_f^2) = \mathcal{O}(h_f^2).
	\end{equation}
The results of the $H^1$ Cauchy error analysis are found in Table~\ref{tab1} and confirm second-order convergence in this case. Additionally, we have proved that (at the theoretical level) the modified energy is non-increasing at each time step.  This is observed in our computations, but, for the sake of brevity, we will suppress an extensive discussion of numerical energy dissipation.

	\section*{Acknowledgements} This work is supported in part by the grants NSF DMS-1115420, 1418689, and NSFC 11271281 (C.~Wang), and NSF-DMS 1115390 and 1418692 (S.~Wise).
	
	\bibliographystyle{plain}
	\bibliography{SOCH}
	
	\end{document}